\documentclass[12pt,a4paper]{amsart}
\usepackage{amssymb,amscd}
\usepackage{amsfonts}
\usepackage[top=35mm, bottom=35mm, left=30mm, right=30mm]{geometry}
\usepackage[colorlinks=true,citecolor=blue]{hyperref}
\usepackage{mathptmx}
\usepackage{eucal}
\usepackage{graphicx}
\usepackage{mathrsfs}
\usepackage{amssymb}
\usepackage{amsmath}
\usepackage{amsthm}
\usepackage{xcolor}
\usepackage[pagewise]{lineno}\nolinenumbers

\newtheorem{theorem}{Theorem}[section]
\newtheorem{proposition}[theorem]{Proposition}
\newtheorem{lemma}[theorem]{Lemma}

\newtheorem{corollary}[theorem]{Corollary}

\newtheorem*{open question}{Open Question}

\newtheorem{remark}[theorem]{Remark}

\theoremstyle{definition}
\newtheorem{definition}[theorem]{Definition}

\numberwithin{equation}{section}

\makeatletter

\newcommand{\Rmnum}[1]{\expandafter\@slowromancap\romannumeral #1@}
\makeatother

\begin{document}
\title{Ergodic measures of intermediate entropies for $\mathbb{Z}^{d}$-action}
\author{Yage Liu, Ercai Chen and Xiaoyao Zhou*}
\address
{1.School of Mathematical Sciences,\small Ministry of Education Key Laboratory of NSLSCS\\	\small  Nanjing Normal University, Nanjing 210023, Jiangsu, P.R.China}
\email{liuyage16@163.com }
\email{ecchen@njnu.edu.cn}
\email{zhouxiaoyaodeyouxian@126.com}
\renewcommand{\thefootnote}{}
\footnotetext{*Corresponding author:zhouxiaoyaodeyouxian@126.com}
\subjclass[2020]{primary 37A35; 37B65; secondary 37B40; 37C40; 37C50; 37D35.}
\keywords{intermediate entropy; entropy dense; approximate product property; asymptotically entropy expansive.}
\renewcommand{\thefootnote}{\arabic{footnote}}
\begin{abstract}
For dynamical systems satisfying the approximate $\mathbb{Z}^{d}$ or $\mathbb{Z}_+^{d}$-product property and asymptotically entropy expansiveness, we establish a precise description of the structure of their space of invariant measures. In particular, we prove that the set of ergodic measures with any given intermediate entropy is generic in certain natural subspaces. As a consequence, this result confirms Katok's conjecture on the existence of ergodic measures with arbitrary intermediate entropy for such systems.
\end{abstract}
\maketitle
\section{Introduction and main results}
\subsection{Introduction}
Whether positive topological entropy forces a rich structure on the space of invariant measures is a long‑standing problem in dynamical systems. Parry raised a specific version of this question: must a strictly ergodic (i.e. minimal and uniquely ergodic) system necessarily have zero topological entropy? The answer turns out to be negative, and numerous $C^0$ counterexamples have been constructed (e.g.~\cite{BCL07,GW94,HK67}). For smooth systems, however, a positive answer appears plausible: a conjecture attributed to Herman. His reasoning is that positive topological entropy implies the existence of nonzero Lyapunov exponents, from which one can extract hyperbolic behavior. In a seminal contribution \cite{K80}, Katok proved that for $C^{1+\alpha}$ diffeomorphisms on a two‑dimensional manifold, positive topological entropy indeed yields horseshoes. As a consequence, such systems possess ergodic measures with every intermediate metric entropy. Katok also conjectured that this property should hold for any sufficiently regular smooth system in any dimension.

In this paper, let $L = \mathbb{Z}^d$ or $\mathbb{Z}_+^d$ with $d\geq 1$. By a \emph{topological dynamical system} we mean a triple $(X, d, \mathcal{T})$, where $(X,d)$ is a compact metric space and $\mathcal{T}$ is a continuous $L$-action on $X$. We denote by $\mathcal{M}(X)$ the space of Borel probability measures on $X$, by $\mathcal{M}(X, \mathcal{T})$ the subspace of $\mathcal{T}$-invariant measures, and by $\mathcal{M}_e(X,\mathcal{T})$ the set of ergodic measures. Let $h(\mathcal{T}) = h(X, \mathcal{T})$ denote the topological entropy, and for $\mu \in \mathcal{M}(X, \mathcal{T})$, let $h_\mu(\mathcal{T})$ denote the entropy of $\mu$. \textbf{Katok's conjecture} asserts that for any $C^2$ diffeomorphism $T$ on a compact Riemannian manifold $X$, the collection of ergodic entropies
$$ \mathbb{H}(X, T) := \{ h_\mu(T) : \mu \in \mathcal{M}_e(X,T) \}$$
contains $[0, h(T))$.

In this work, we establish that for a continuous $L$-action $\mathcal{T}$, the combination of asymptotically entropy expansiveness and the approximate $L$-product property yields the intermediate entropy property, which is precisely the statement of Katok's conjecture. Our main theorem can be stated as follows.
\begin{theorem}\label{thm 1.1}
Let $(X,d,\mathcal{T})$ be an asymptotically entropy expansive system with the approximate $L$-product property. Then 
	\begin{align*}
	\mathbb{H}(X,\mathcal{T})=[0,h(\mathcal{T})].       
	\end{align*}
\end{theorem}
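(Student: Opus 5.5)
The plan is to prove the stronger statement suggested in the abstract: for every $\alpha\in[0,h(\mathcal{T})]$, ergodic measures of entropy exactly $\alpha$ are dense (indeed residual) in a suitable closed subset of $\mathcal{M}(X,\mathcal{T})$. Theorem~\ref{thm 1.1} then follows once that subset is nonempty. The two endpoints are handled separately. Since asymptotic entropy expansiveness makes $\mu\mapsto h_\mu(\mathcal{T})$ upper semicontinuous, a measure of maximal entropy exists. Its ergodic decomposition, combined with the affinity of entropy, yields an ergodic measure of entropy $h(\mathcal{T})$. For $\alpha=0$ we will produce ergodic measures of arbitrarily small entropy; the same genericity argument then gives entropy exactly $0$.

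\textbf{Step 1: entropy-dense approximation.} Let $\mu\in\mathcal{M}(X,\mathcal{T})$, $\eta>0$, and let $h^*\le h_\mu(\mathcal{T})$. Using the approximate $L$-product property, we construct an ergodic $\nu$ with $\nu$ close to $\mu$ in the weak$^*$ topology and $|h_\nu(\mathcal{T})-h^*|<\eta$. The construction follows the Pfister--Sullivan / Eizenberg--Kifer--Weiss scheme adapted to $\mathbb{Z}^d$. First approximate $\mu$ by a finite convex combination $\sum_i\lambda_i\mu_i$ of ergodic measures. For each $\mu_i$, the Katok entropy formula (valid for $\mathbb{Z}^d$ Følner cubes) gives many $(F_n,\varepsilon)$-separated orbit segments that are generic for $\mu_i$. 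We tile a large cube $F_N$ by subcubes in proportions $\lambda_i$ and glue the segments using the approximate product property, with a small fraction of mismatched sites. To lower the entropy to $h^*$, we restrict the alphabet at a given scale, either by using only a subset of the separated segments of the prescribed cardinality $\approx e^{h^*|F_n|}$, or by inserting ``filler'' blocks of low complexity. Iterating this gluing over an increasing sequence of scales yields a closed invariant set $Y$ that is the orbit closure of a hierarchical (Toeplitz-like) construction. This set is uniquely ergodic, or at least supports an ergodic measure $\nu$, whose empirical measures are close to $\mu$. A lower bound $h_\nu\ge h^*-\eta$ comes from counting separated points. The upper bound $h(Y)\le h^*+\eta$ uses asymptotic entropy expansiveness: it lets us control the entropy by $(F_n,\varepsilon)$-spanning counts at a fixed scale $\varepsilon$, with tail entropy $h^*(\mathcal{T},\varepsilon)\to0$.

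\textbf{Step 2: genericity.} Fix $\alpha$ and consider the closed set $\mathcal{M}_\alpha:=\overline{\{\mu:h_\mu(\mathcal{T})\ge\alpha\}}$, or its closure of ergodic elements. By upper semicontinuity, $\{\mu:h_\mu<\alpha+1/n\}$ is open, so $\{\mu: h_\mu\le\alpha\}$ is a $G_\delta$. Ergodic measures form a $G_\delta$ in $\mathcal{M}(X,\mathcal{T})$, a standard fact for amenable group actions. Applying Step 1 with $h^*=\alpha$ to measures of entropy $\ge\alpha$ shows that these $G_\delta$ sets are dense in $\mathcal{M}_\alpha$, which is a Baire space. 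We also need $\{h_\mu\ge\alpha\}$ to be dense in $\mathcal{M}_\alpha$; this holds by definition. A residual set of measures in $\mathcal{M}_\alpha$ is therefore ergodic with $h_\mu\le\alpha$. To get $h_\mu\ge\alpha$, intersect with the set where ergodic approximants from Step 1 have entropy $>\alpha-1/n$. Instead of using a $G_\delta$ here, we build a convergent sequence $\nu_k$ of ergodic measures with $h_{\nu_k}\in(\alpha-2^{-k},\alpha+2^{-k})$ whose supports are nested, so the limit is ergodic. Alternatively, we choose each $\nu_{k+1}$ so close to $\nu_k$ on $\mathrm{supp}\,\nu_k$-adapted partitions that the entropy lower bound passes to the limit, using the fact that its Katok separated sets persist. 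Upper semicontinuity gives $h_{\lim}\le\alpha$.

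\textbf{Main obstacle.} The hardest part is Step 1's precise entropy control: achieving both an upper bound and a lower bound on the entropy of the glued ergodic measure in the multi-dimensional setting, where the approximate product property only allows gluing with a small proportion of errors. The error sites contribute uncontrolled combinatorial entropy, on the order of $\binom{|F_N|}{\delta|F_N|}$ choices, which must be shown to be $o(1)$ as $\delta\to0$. The $\varepsilon$-shadowing also loses separation, so asymptotic entropy expansiveness is exactly what is needed to pass from $\varepsilon$-scale counts to true entropy. My first attempt would be to bound $h_{\mathrm{top}}(Y)$ by the number of admissible blocks times $e^{|F_N|(H(\delta)+h^*(\mathcal{T},\varepsilon))}$, which tends to $e^{|F_N|(h^*+o(1))}$.
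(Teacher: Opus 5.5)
Your architecture matches the paper's: compact invariant sets are built via the approximate product property so that all their invariant measures lie near $\mu$, their entropy is pinned in a small window using tail entropy, a Baire argument is run in $\{h_\mu\ge\alpha\}$, and $\alpha=h(\mathcal{T})$ is handled by a measure of maximal entropy. However, Step 2 has a genuine gap for $\alpha\in(0,h(\mathcal{T}))$. For $\alpha=0$ your argument is fine, since $\mathcal{M}_0$ is all of $\mathcal{M}(X,\mathcal{T})$.

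To run Baire in $\mathcal{M}_\alpha=\{\mu:h_\mu(\mathcal{T})\ge\alpha\}$, which is already closed by upper semicontinuity, you must show that each relatively $G_\delta$ set $\mathcal{M}_e(X,\mathcal{T})\cap\{h<\alpha+1/n\}\cap\mathcal{M}_\alpha$ is dense in $\mathcal{M}_\alpha$. That is, every $\mu$ with $h_\mu\ge\alpha$ must be approximated by ergodic $\nu$ with $\alpha\le h_\nu<\alpha+1/n$. Step 1 with $h^*=\alpha$ only gives $|h_\nu-\alpha|<\eta$, so $\nu$ may fall outside $\mathcal{M}_\alpha$. When $h_\mu=\alpha$ you also cannot aim Step 1 above $\alpha$, because Step 1 requires $h^*\le h_\mu$.

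The missing idea is the perturbation in the paper's Proposition 5.2. Since $\alpha<h(\mathcal{T})$, take $\mu_X$ with $h_{\mu_X}>\alpha$ and set $\mu'=(1-\theta)\mu+\theta\mu_X$. Then $D(\mu',\mu)\le\theta D^*$, and $h_{\mu'}>\alpha$ by affinity. Now apply Step 1 to $\mu'$ with target $\alpha+\rho$ and error $<\rho$, where $0<\rho<\min\{h_{\mu'}-\alpha,\,1/(2n)\}$. Once this is in place the argument closes immediately: the residual set consists of ergodic measures with $h\le\alpha$, and membership in $\mathcal{M}_\alpha$ gives $h\ge\alpha$ for free. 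There is no separate lower bound to prove.

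Your fallback construction for that lower bound should be discarded, since both of its steps are false.
First, nested supports do not make a weak-$*$ limit of ergodic measures ergodic. Symmetric Markov measures on $\{0,1\}^{\mathbb{Z}}$ with flip probability $1/k$ are ergodic and fully supported, yet they converge to $\frac12(\delta_{0^\infty}+\delta_{1^\infty})$.
Second, upper semicontinuity gives $h_{\lim}\ge\limsup_k h_{\nu_k}$, which is the lower bound, not $h_{\lim}\le\alpha$.

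There are also two smaller issues in Step 1.
First, counting separated points bounds $h(Y)$, not $h_\nu$ for a pre-chosen $\nu$. You need \emph{every} invariant measure on $Y$ to be near $\mu$, which requires uniform control of empirical measures on cubes at all positions (as with $Z_{KM,3\eta}$ and Lemma 3.2 in the paper). Then choose an ergodic $\nu$ on $Y$ by the variational principle with $h_\nu$ close to $h(Y)$.
Second, your upper count must include the factor $r(\varepsilon)^{\delta|F|}$ for the unmatched sites, with $\delta$ chosen after $\varepsilon$, and the $|\Lambda_M|$ offset choices per block. Tail entropy enters only additively, through $h(Y)\le h(Y,\mathcal{T},\varepsilon)+h^*(\mathcal{T},\varepsilon)$.

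Finally, a hierarchy of scales is unnecessary, since the approximate product property already glues a whole $L$-indexed family at a single scale.
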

Significant strides have been made toward resolving Katok's conjecture in a variety of dynamical contexts. In early work, Sun established the conjecture for certain skew product systems \cite{S10a, S10b} and later extended it to linear toral automorphisms, including both hyperbolic and non-hyperbolic cases \cite{S12}. A major breakthrough came from Quas and Soo \cite{QS16}, who demonstrated that any asymptotically entropy expansive system satisfying both the almost weak specification property and the small boundary property is universal, thereby yielding the intermediate entropy property. This result was subsequently refined by Burguet \cite{B20}, who showed that systems with the specification property are topologically universal or almost Borel universal; in particular, any aperiodic subshift whose entropy is lower can be embedded either topologically or in an almost Borel sense. Chandgotia and Meyerovitch \cite{CM21} further advanced this line of inquiry by proving that a generic homeomorphism on a compact manifold of dimension at least two can model any ergodic transformation; they also demonstrated that the non-uniform specification property implies almost Borel universality. Separately, Guan, Sun and Wu \cite{GSW17} established the almost weak specification property for certain homogeneous systems, thereby confirming Katok's conjecture within the universality framework. It is worth emphasizing that the approximate product property alone does not suffice for universality. The ideas introduced by Quas and Soo also played a pivotal role in the work of Huang, Xu and Xu \cite{HXX21}, where they proved the conjecture for affine transformations of nilmanifolds.

Parallel to these developments, alternative approaches have been pursued. Ures \cite{U12} verified Katok's conjecture for a class of partially hyperbolic diffeomorphisms with one-dimensional center. Yang and Zhang \cite{YZ20} examined a broad family of robustly non-hyperbolic transitive diffeomorphisms and showed that every ergodic measure can be approximated, both in the weak-$*$ topology and in entropy, by hyperbolic sets, thereby confirming the conjecture. Konieczny, Kupsa and Kwietniak \cite{KKK18} proved the conjecture for shift spaces with a safe symbol (which includes all hereditary shifts) by demonstrating that the set of ergodic invariant measures is arcwise connected under the $d$-bar metric. Li and Oprocha \cite{LO18} showed that in any topologically transitive system with the shadowing property, ergodic measures supported on odometers are dense in the space of invariant measures. Furthermore, when the entropy map is upper semi-continuous, ergodic measures with entropy exactly $c$ are generic among invariant measures of entropy at least $c$, thereby confirming the conjecture. More recently, Sun \cite{S21} developed a novel approach based on uniqueness of equilibrium states, proving the conjecture for a class of Mañé systems. In subsequent work \cite{S25}, Sun provided a further verification of Katok's conjecture by characterizing a fine structure of the invariant measure space: ergodic measures of intermediate entropies and those of intermediate pressures are generic in suitable subspaces.

The hypothesis of Theorem 1.1 includes the approximate $L$-product property, a concept originally due to Pfister and Sullivan \cite{PS05}. This condition ranks among the mildest of the so-called \emph{specification-like properties} \cite{KLO16}. Such tracing properties are intimately connected with hyperbolicity and have proven indispensable in the analysis of smooth dynamical systems. The theme traces back to Bowen's pioneering work \cite{B71}, where the classical specification property was introduced to investigate periodic points and invariant measures for Axiom A diffeomorphisms. In the intervening decades, a host of variants have emerged, each tailored to accommodate ever wider classes of systems and each reflecting successively weaker shades of hyperbolic behavior.

Under the hypotheses of Theorem 1.1, our analysis uncovers a fine structure of the invariant measure space: a conclusion that substantially strengthens the intermediate entropy property. A subset $\Delta \subseteq X$ is termed $\mathcal{T}$-invariant if $\mathcal{T}(\Delta)\subset \Delta$. Whenever $\Delta$ is compact and $\mathcal{T}$-invariant, the pair $(\Delta,\mathcal{T})$ itself forms a topological dynamical system; consequently, notations such as $\mathcal{M}(\Delta,\mathcal{T})$ make sense. Motivated by the concepts of \emph{almost entropy-approximable}, \emph{entropy-approximable} and \emph{entropy-generic} introduced in \cite[Definition 1.2]{S25}, we adapt these notions to the setting of a continuous $L$-action $\mathcal{T}$.
\begin{definition}\label{definition 1.2} 
\cite[Definition 1.2]{S25} Let $(X,d,\mathcal{T})$ be a topological dynamical system.
\begin{enumerate}
    \item Given $\mu \in \mathcal{M}(X, \mathcal{T})$, we say that $\mu$ is \emph{almost entropy-approximable} (by compact invariant sets with intermediate entropies) if for every neighborhood $U$ of $\mu$, every $h\in (0, h_\mu(\mathcal{T}))$ and every $\varepsilon, \beta > 0$, there exist a compact $\mathcal{T}$-invariant set $\Delta$ and a $\gamma \in (0, \varepsilon)$ such that
    $$\mathcal{M}(\Delta, \mathcal{T}) \subset U,~~h(\Delta, \mathcal{T}) > h~~\text{and}~~h(\Delta, \mathcal{T}, \gamma) < h + \beta,$$ 
    where $h(\Delta, \mathcal{T}, \gamma)$ denotes the Bowen topological entropy of the subsystem $(\Delta, \mathcal{T})$ computed at the scale $\gamma$.
    \item Given $\mu \in \mathcal{M}(X, \mathcal{T})$, we say that $\mu$ is \emph{entropy-approximable} (by compact invariant sets with intermediate entropies) if for every neighborhood $U$ of $\mu$, every $h \in (0, h_\mu(\mathcal{T}))$ and every $\beta > 0$, there exists a compact $\mathcal{T}$-invariant set $\Delta$ such that
    $$\mathcal{M}(\Delta, \mathcal{T}) \subset U~~\text{and}~~h < h(\Delta, \mathcal{T}) < h + \beta.$$
    \item The system $(X, d, \mathcal{T})$ is said to be \emph{entropy-generic} if for every $\alpha \in [0, h(\mathcal{T}))$, the set
    $$\mathcal{M}_e(X, \mathcal{T}, \alpha):= \{ \mu \in \mathcal{M}_e(X, \mathcal{T}) : h_\mu(\mathcal{T}) = \alpha \}$$
    is residual in the subspace
    $$\mathcal{M}^\alpha(X, \mathcal{T}) := \{ \mu \in \mathcal{M}(X, \mathcal{T}) : h_\mu(\mathcal{T}) \geq \alpha \}.$$
\end{enumerate}
\end{definition}
The following theorem is our key result, which relies solely on the approximate $L$-product property.
\begin{theorem}\label{theorem 1.3}
Let $(X, d, \mathcal{T})$ be a system with the approximate $L$-product property. Then every invariant measure $\mu \in \mathcal{M}(X, \mathcal{T})$ is almost entropy-approximable.
\end{theorem}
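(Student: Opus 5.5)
We follow the strategy of \cite[Theorem 1.3]{S25}, adapted to $L$-actions via Følner rectangles $F_n=[0,n)^d$ (intersected with $L$). Fix $\mu\in\mathcal{M}(X,\mathcal{T})$, a neighborhood $U$ of $\mu$, $h\in(0,h_\mu(\mathcal{T}))$ and $\varepsilon,\beta>0$. The plan is to build, from orbit segments that are typical for $\mu$, a compact invariant set $\Delta$. This set should be the orbit closure of all points shadowing concatenations of chosen orbit blocks placed on a lattice of translated rectangles. We will tune the number of allowed blocks so that the entropy of $\Delta$ lies just above $h$, while its entropy at a small scale $\gamma$ stays below $h+\beta$.

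\textbf{Step 1 (separated sets of typical blocks).} First reduce to $\mu$ close to a finite convex combination of ergodic measures $\sum_i a_i\nu_i$ with $\sum a_i h_{\nu_i}>h$. Using the Katok entropy formula for amenable actions, together with the ergodic theorem along Følner rectangles, choose $\delta\ll\varepsilon$ and a large $n$. This yields, for each ergodic component, an $(F_n,\delta)$-separated set of points whose empirical measures on $F_n$ lie in a small convex neighborhood of $\nu_i$, with cardinality $\approx e^{|F_n|h_{\nu_i}}$. Gluing the components by sub-rectangles of proportion $a_i$ inside a larger rectangle, we obtain a family $\Gamma$ of "words" whose empirical measures lie in a prescribed convex neighborhood $V$ with $\overline V\subset U$. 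Then we thin $\Gamma$ to a subset $\Gamma'$ with $\frac{1}{|F_N|}\log\#\Gamma'\in(h+\beta/4,h+\beta/2)$; this is possible because the full family has exponential size $>h$ and we may discard elements freely.

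\textbf{Step 2 (construction of $\Delta$).} Tile $L$ (or $\mathbb{Z}_+^d$) by translates $g+F_{N'}$ with $N'$ slightly larger than $N$. Using the approximate $L$-product property with parameters $(\delta,\eta)$, for every finite assignment of words in $\Gamma'$ to tiles, there is a point that $\delta$-shadows each word on all but an $\eta$-fraction of its sites. By compactness, this extends to infinite assignments. Let $Y$ be the closure of all such points, and let $\Delta=\overline{\bigcup_{g\in F_{N'}}\mathcal{T}^gY}$, which is compact and invariant. Every invariant measure on $\Delta$ has generic-point empirical averages that are, up to $O(\eta+|F_{N'}\setminus F_N|/|F_N|)$ and the shadowing error $\delta$, averages of measures in $V$. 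By convexity of $V$, this gives $\mathcal{M}(\Delta,\mathcal{T})\subset U$.

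\textbf{Step 3 (entropy bounds; main obstacle).} For the lower bound, distinct assignments give $(F_{kN'},\delta/3)$-separated points. Here we use that shadowing errors on an $\eta$-fraction cannot merge $\delta$-separated words, provided separation is taken on a sub-block robust to $\eta$-exceptions; we arrange this by requiring $\Gamma'$ to be separated in a Hamming-type sense, which costs only a small entropy factor. This yields $h(\Delta,\mathcal{T})\ge \frac{|F_N|}{|F_{N'}|}(h+\beta/4)-o(1)>h$.

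The upper bound $h(\Delta,\mathcal{T},\gamma)<h+\beta$ is the delicate point, and we expect it to be the main obstacle. For $\gamma\in(0,\varepsilon)$ with $\gamma\gg\delta$, a point of $\Delta$ on $F_{kN'}$ is determined up to $\gamma$ by four pieces of data: the word assignment (at most $(\#\Gamma')^{k^d}$ choices), the offset in $F_{N'}$, the exceptional $\eta$-fraction of sites, and a $\gamma$-cover of the orbit on those sites and on the gaps $F_{N'}\setminus F_N$. Bounding the last three contributes $e^{|F_{kN'}|(H(\eta)+(\eta+\theta)\log \#\mathcal{C}_\gamma)}$, where $\#\mathcal{C}_\gamma$ is a fixed $\gamma$-cover size of $X$ and $\theta=|F_{N'}\setminus F_N|/|F_{N'}|$. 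The key is to fix $\gamma$ first, and only then choose $\eta,\theta$ (hence $\delta,N,N'$) so small that these extra terms are below $\beta/4$. Here the $\mathbb{Z}^d$ geometry requires care: we need boundary sizes $O(N^{d-1})$ and the fact that the approximate product property controls the mistake fraction uniformly. With these choices the total is $<h+\beta$, completing the proof.
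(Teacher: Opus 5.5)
Your architecture matches the paper's, which follows Sun. You take Hamming-separated blocks typical for an ergodic measure and form points tracing arbitrary $L$-indexed concatenations of them. $\Delta$ is a finite union of translates of the tracing set, measures are controlled by splitting empirical measures into tile averages, and in the upper bound the scale is fixed before the mistake fraction (the paper's $\delta\ln r(\varepsilon)<\beta$ in \eqref{4.4}, with $\gamma=2\varepsilon$). Two differences are inessential. The paper does not glue ergodic components; it invokes entropy-density (Proposition \ref{proposition2.15}) to replace $\mu$ by a single ergodic $\mu_0$ with $D(\mu,\mu_0)<\eta_0$ and $h_{\mu_0}>h$, which removes your first level of gluing. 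Your gaps $F_{N'}\setminus F_N$ are also unnecessary, since the approximate product property already tiles by $\Lambda_n$ at positions $\mathbf{i}D_n$ with no gaps.

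The genuine gap is in Step 3: you treat the approximate product property as aligned shadowing. Definition \ref{definition 2.13} only gives $d(T^{\mathbf{i}D_n+\xi_{\mathbf{i}}+\mathbf{k}}x,T^{\mathbf{k}}x^{\mathbf{i}})\le\varepsilon$ for $\mathbf{k}\in\Lambda^{\mathbf{i}}$. The translation $\xi_{\mathbf{i}}$ depends on the tile and the whole configuration, and you do not control it.

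\emph{Why the separation claim fails.} If $y_1$ traces $w_1$ with shift $\xi$ and $y_2$ traces $w_2$ with shift $\xi'$ at the same tile, you are comparing $T^{\mathbf{k}}w_1$ with $T^{\mathbf{k}+\xi-\xi'}w_2$. Hamming separation at aligned positions says nothing about this. Concretely, the point that traces $w_1$ with shift $\xi_{\mathbf{i}}$ also traces $T^{\mathbf{e}_1}w_1$ with shift $\xi_{\mathbf{i}}+\mathbf{e}_1$ on almost the same set. Nothing prevents your block family from containing both $w_1$ and $T^{\mathbf{e}_1}w_1$. So two distinct assignments may be realized by the same point, and ``distinct assignments give separated points'' is false as stated.

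\emph{The fix.} The missing step is the paper's. Record the shifts as symbolic data $\xi\in\Sigma=(\Lambda_M)^L$. Pigeonhole to find one shift cylinder $C^\Sigma_*$ carrying at least $|\Gamma_M|^{V_n}/V_M^{V_n}$ assignment cylinders, and compare only points with the same shifts (Lemma \ref{lemma4.5}). The entropy loss is $\ln V_M/V_M$, made $<\beta$ in \eqref{4.6}. The shift data must also enter your upper-bound count (the factor $V_M^{V_n}$ in Lemma \ref{lemma4.7}), because without $\xi_{\mathbf{i}}$ the good sites of a tile are not determined. The same issue has to be handled in your first-level gluing of component blocks into words.

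\emph{Scales.} Your scales also do not close as written: $\delta$-separated words shadowed to precision $\delta$ yield no separation at all. You need the shadowing precision below a third of the word-separation scale (the paper's $\varepsilon<\gamma_0/3$ in \eqref{4.3}). You also need the Hamming fraction to exceed twice the mistake fraction ($2\delta<\delta_0$ in \eqref{4.4}), since both points' exceptional sets must be removed.
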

If the system is further assumed to be asymptotically entropy expansive, a sharper conclusion can be drawn, which directly yields Theorem \ref{thm 1.1}. Indeed, entropy-genericity is a notion considerably stronger than the intermediate entropy property. Moreover, under these conditions, the existence of at least one measure of maximal entropy is ensured.
\begin{theorem}\label{theorem 1.4}
Let $(X, d, \mathcal{T})$ be an asymptotically entropy expansive system with the approximate $L$-product property. Then the following hold:
\begin{enumerate}
\item\label{theorem 1.4(1)} Every invariant measure $\mu \in \mathcal{M}(X, \mathcal{T})$ is entropy-approximable.
\item\label{theorem 1.4(2)} The system $(X, d, \mathcal{T})$ is entropy-generic.
\end{enumerate}
\end{theorem}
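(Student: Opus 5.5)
Both parts will be deduced from Theorem \ref{theorem 1.3}, using two standard consequences of asymptotic entropy expansiveness for $L$-actions. First, the entropy map $\mu\mapsto h_\mu(\mathcal{T})$ is upper semicontinuous. Second, the defect $h(\Delta,\mathcal{T})-h(\Delta,\mathcal{T},\gamma)$ can be made uniformly small over all compact invariant $\Delta$ by taking $\gamma$ small. Precisely: for every $\beta>0$ there is $\varepsilon_0>0$ such that
\[
h(\Delta,\mathcal{T})\le h(\Delta,\mathcal{T},\gamma)+\beta/2 \quad\text{for every } \gamma\in(0,\varepsilon_0) \text{ and every compact invariant } \Delta .
\]
This holds because the tail entropy $h^*(\mathcal{T},\varepsilon)$ tends to $0$, tail entropy does not increase when passing to subsystems, and $h(\Delta)\le h(\Delta,\gamma)+h^*(\Delta,\gamma)$. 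I will prove this inequality for $\mathbb{Z}^d$ and $\mathbb{Z}^d_+$ actions by the usual Bowen/Misiurewicz argument with F\o lner cubes, or cite it.

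\emph{Part (1).} Fix a neighbourhood $U$ of $\mu$, $h\in(0,h_\mu)$ and $\beta>0$. Choose $\varepsilon_0$ as above for $\beta/2$. Apply Theorem \ref{theorem 1.3} with $\varepsilon=\varepsilon_0$ and $\beta/2$. This gives a compact invariant $\Delta$ with $\mathcal{M}(\Delta)\subset U$, $h(\Delta)>h$, and $h(\Delta,\gamma)<h+\beta/2$ for some $\gamma<\varepsilon_0$. Then $h(\Delta)\le h(\Delta,\gamma)+\beta/2<h+\beta$, which is exactly entropy-approximability.

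\emph{Part (2).} Fix $\alpha\in[0,h(\mathcal{T}))$. The space $\mathcal{M}^\alpha$ is closed, since the entropy map is u.s.c., and hence it is a compact metric space, nonempty because a measure of maximal entropy exists. I split the argument into a $G_\delta$ part and a density part.

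\emph{$G_\delta$.} $\mathcal{M}_e$ is a $G_\delta$ in $\mathcal{M}(X,\mathcal{T})$; this is standard and holds for amenable group actions. The set $\{\mu:h_\mu<\alpha+1/n\}$ is open by upper semicontinuity. So $\mathcal{M}_e(X,\mathcal{T},\alpha)=\mathcal{M}^\alpha\cap\mathcal{M}_e\cap\bigcap_n\{h_\mu<\alpha+1/n\}$ is a $G_\delta$ in $\mathcal{M}^\alpha$.

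\emph{Density.} Take $\mu\in\mathcal{M}^\alpha$ and a neighbourhood $U$ of $\mu$. If $h_\mu>\alpha$, choose $h\in(\alpha,h_\mu)$ (for $\alpha=0$, any $h\in(0,h_\mu)$). Part (1) with $\beta$ small gives $\Delta$ with $\mathcal{M}(\Delta)\subset U$ and $h<h(\Delta)<h+\beta$. By the variational principle and u.s.c. on $\Delta$ there is an ergodic $\nu$ on $\Delta$ with $h_\nu=h(\Delta)$, and $\nu\in U$. Letting $U$ shrink and $h\downarrow\alpha$ shows that ergodic measures with entropy in $(\alpha,\alpha+1/n)$ are dense, which is what the Baire argument needs. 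If $h_\mu=\alpha$ with $\alpha>0$, first perturb: take a measure $\mu'=(1-t)\mu+t m$ with $m$ of maximal entropy. Since $\alpha<h(\mathcal{T})$, $\mu'$ has entropy $>\alpha$ and is arbitrarily close to $\mu$, and I then apply the previous case to $\mu'$. If $\alpha=0$ and $h_\mu=0$, the same convex perturbation works as long as $h(\mathcal{T})>0$. If $h(\mathcal{T})=0$, the statement reduces to density of ergodic measures, which I will take from the approximate product property, as in Pfister--Sullivan for $L$-actions.

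\emph{Conclusion.} Each open set $\mathcal{M}^\alpha\cap\mathcal{M}_e\cap\{h_\mu<\alpha+1/n\}$ contains the dense ergodic measures just constructed. Also $\mathcal{M}_e\cap\mathcal{M}^\alpha$ is a dense $G_\delta$ in $\mathcal{M}^\alpha$. By the Baire category theorem the intersection is residual.

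The main obstacle is the uniform tail-entropy estimate for $L$-actions, which makes the passage from scale-$\gamma$ entropy to true entropy valid for every subsystem. The density of ergodic measures in the zero-entropy case is a secondary point.
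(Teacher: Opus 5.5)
Your proposal is correct and follows essentially the paper's route. Part (1) is Theorem \ref{theorem 1.3} combined with the tail-entropy bound $h(\Delta,\mathcal{T})\le h(\Delta,\mathcal{T},\gamma)+h^*(\mathcal{T},\gamma)$ (Propositions \ref{proposition 2.9} and \ref{proposition 5.1}), and part (2) is the same Baire-category argument as Proposition \ref{proposition 5.2}: upper semicontinuity, a convex perturbation toward a high-entropy measure, then an ergodic measure on the approximating $\Delta$. The differences are cosmetic: you perturb only when $h_\mu=\alpha$, and your separate treatment of $h(\mathcal{T})=0$ is unnecessary because $[0,h(\mathcal{T}))=\emptyset$ makes that case vacuous.
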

We remark that the definitions and conclusions discussed above remain meaningful in the zero-entropy setting, where the requirements concerning entropy estimates are automatically satisfied. For a measure $\mu$ with $h_\mu(\mathcal{T}) = 0$, it is (almost) entropy-approximable precisely when for every neighborhood $U$ of $\mu$ there exists a compact $\mathcal{T}$-invariant subset $\Delta$ such that $\mathcal{M}(\Delta, \mathcal{T}) \subset U$. In the case $h(\mathcal{T}) = 0$, entropy-genericity holds without further assumptions. Moreover, it follows from \cite[Theorem 2.1]{PS05} that for any system possessing the approximate $L$-product property, the collection $\mathcal{M}_e(X, \mathcal{T}) = \mathcal{M}_e(X, \mathcal{T}, 0)$ is residual in $\mathcal{M}(X, \mathcal{T}) = \mathcal{M}^0(X, \mathcal{T})$.

The notion of entropy-genericity was first introduced by Sun. Subsequently, Li and Oprocha \cite{LO18} were shown to have independently established entropy-genericity under more restrictive hypotheses, namely topological transitivity, the shadowing property and upper semi-continuity of the entropy map. Combining these findings with the description of the structure of $\mathcal{M}(X,\mathcal{T})$ provided in Theorem~\ref{theorem 1.4}, we obtain the following corollary.
\begin{corollary}\label{cor1.5}
Let $(X, d, \mathcal{T})$ be an asymptotically entropy expansive system with the approximate $L$-product property. For $\mu \in \mathcal{M}(X,\mathcal{T})$ and a neighborhood $U$ of $\mu$, denote
$$\mathbb{H}(X, \mathcal{T}, U) := \{h_\nu(\mathcal{T}) : \nu \in U \cap \mathcal{M}_e(X, \mathcal{T})\}.$$
Then for every $\mu \in \mathcal{M}(X,\mathcal{T})$ and every neighborhood $U$ of $\mu$, we have
\[\begin{cases} 
\mathbb{H}(X, \mathcal{T}, U) \supset [0, h_\mu(\mathcal{T})], & \text{if } h_\mu(\mathcal{T}) < h(\mathcal{T}); \\ 
\mathbb{H}(X, \mathcal{T}, U) \supset [0, h_\mu(\mathcal{T})), & \text{if } h_\mu(\mathcal{T}) = h(\mathcal{T}).
\end{cases}\]
\end{corollary}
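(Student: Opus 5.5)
Fix $\mu\in\mathcal{M}(X,\mathcal{T})$ and a neighborhood $U$ of $\mu$; we may take $U$ open. We derive Corollary \ref{cor1.5} from Theorem \ref{theorem 1.4}(2), the Pfister--Sullivan density result quoted after Theorem \ref{theorem 1.4}, and upper semicontinuity of the entropy map. Asymptotic entropy expansiveness gives upper semicontinuity of $\nu\mapsto h_\nu(\mathcal{T})$, so each $\mathcal{M}^\alpha(X,\mathcal{T})$ is closed in the compact metrizable space $\mathcal{M}(X,\mathcal{T})$. It is therefore a Baire space, and a residual subset of it is dense.

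\textbf{Case $\alpha=0$.} By \cite[Theorem 2.1]{PS05}, $\mathcal{M}_e(X,\mathcal{T})$ is residual, hence dense, in $\mathcal{M}(X,\mathcal{T})$. So $U$ contains an ergodic measure, and its entropy is $\ge 0$. To realize entropy exactly $0$, we apply entropy-genericity with $\alpha=0$ (allowed when $h(\mathcal{T})>0$; if $h(\mathcal{T})=0$ every ergodic measure has entropy $0$). This shows $\mathcal{M}_e(X,\mathcal{T},0)$ is residual in $\mathcal{M}^0(X,\mathcal{T})=\mathcal{M}(X,\mathcal{T})$, so it meets $U$. Hence $0\in\mathbb{H}(X,\mathcal{T},U)$.

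\textbf{Case $0<\alpha\le h_\mu(\mathcal{T})$ with $\alpha<h(\mathcal{T})$.} Then $\mu\in\mathcal{M}^\alpha(X,\mathcal{T})$, so $U\cap\mathcal{M}^\alpha(X,\mathcal{T})$ is a nonempty relatively open subset of $\mathcal{M}^\alpha(X,\mathcal{T})$. Theorem \ref{theorem 1.4}(2) says $\mathcal{M}_e(X,\mathcal{T},\alpha)$ is residual in $\mathcal{M}^\alpha(X,\mathcal{T})$. By the Baire category theorem it is dense there, so it meets $U$. This gives an ergodic $\nu\in U$ with $h_\nu(\mathcal{T})=\alpha$.

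If $h_\mu(\mathcal{T})<h(\mathcal{T})$, this case covers every $\alpha\in(0,h_\mu(\mathcal{T})]$, giving the closed interval. If $h_\mu(\mathcal{T})=h(\mathcal{T})$, it covers every $\alpha\in(0,h_\mu(\mathcal{T}))$, giving the half-open interval. Together with the case $\alpha=0$, both inclusions follow.

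The only delicate point is ensuring $\mathcal{M}^\alpha(X,\mathcal{T})$ is a Baire space, so that "residual" implies "dense". This is exactly where upper semicontinuity of the entropy map, supplied by asymptotic entropy expansiveness, is used. The endpoint $\alpha=h(\mathcal{T})$ is excluded because Theorem \ref{theorem 1.4}(2) only treats $\alpha<h(\mathcal{T})$. Theorem \ref{theorem 1.4}(1) is an alternative route: one would pick $\Delta$ with $\mathcal{M}(\Delta,\mathcal{T})\subset U$ and then use the variational principle on $\Delta$. That route only gives entropies arbitrarily close to $\alpha$, so the genericity argument is the one to follow.
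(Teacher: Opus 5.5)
Your proposal is correct and follows the paper's route. Like the paper, you combine Theorem \ref{theorem 1.4}(2) with the fact that the residual set $\mathcal{M}_e(X,\mathcal{T},\alpha)$ is dense in the compact, hence Baire, space $\mathcal{M}^\alpha(X,\mathcal{T})$, so it meets $U$. The only real difference is the right endpoint when $h_\mu(\mathcal{T})<h(\mathcal{T})$. You apply genericity directly at $\alpha=h_\mu(\mathcal{T})$, which is legitimate since $\mu\in\mathcal{M}^{h_\mu(\mathcal{T})}(X,\mathcal{T})$. The paper instead first proves $[0,h_\mu(\mathcal{T}))\subset\mathbb{H}(X,\mathcal{T},U)$, then moves to $\mu'=(1-\frac{\eta}{D^*})\mu+\frac{\eta}{D^*}\mu_0$, which has larger entropy and still lies in $U$. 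Your version is slightly more direct, and it also spells out the Baire-category step that the paper's Proposition \ref{pro5.3} uses without comment.
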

Recall that the system $(X, d, \mathcal{T})$ is said to be \textbf{minimal} if it contains no nonempty proper compact $\mathcal{T}$-invariant subset. In Subsection \ref{subsection 4.5}, we shall prove the following result.  
\begin{corollary}\label{cor1.6}
 Let $(X, d, \mathcal{T})$ be a minimal system with the approximate $L$-product property. Then $(X, d, \mathcal{T})$ must be uniquely ergodic and $h(\mathcal{T}) = 0$.
\end{corollary}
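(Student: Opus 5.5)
Since $X$ is minimal, the only nonempty compact $\mathcal{T}$-invariant subset of $X$ is $X$ itself. I would exploit this together with Theorem~\ref{theorem 1.3}, which does not need asymptotic entropy expansiveness. Take any $\mu \in \mathcal{M}(X,\mathcal{T})$ and any neighborhood $U$ of $\mu$. By Theorem~\ref{theorem 1.3}, $\mu$ is almost entropy-approximable. Read with $h_\mu(\mathcal{T})=0$ or with any admissible $h$, this gives at least a compact $\mathcal{T}$-invariant set $\Delta$ with $\mathcal{M}(\Delta,\mathcal{T}) \subset U$. This is exactly the reduction recorded in the remark following Theorem~\ref{theorem 1.4}. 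One technical point: the construction in Theorem~\ref{theorem 1.3} is built from orbit segments that shadow $\mu$-generic behavior, so $\Delta$ is nonempty. Minimality then forces $\Delta = X$, so $\mathcal{M}(X,\mathcal{T}) \subset U$.

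\emph{Unique ergodicity.} Suppose $\mu_1 \neq \mu_2$ are both invariant. Choose a neighborhood $U$ of $\mu_1$ that does not contain $\mu_2$, which is possible because the weak-$*$ topology is Hausdorff. The previous paragraph gives $\mathcal{M}(X,\mathcal{T}) \subset U$, contradicting $\mu_2 \in \mathcal{M}(X,\mathcal{T}) \setminus U$. Hence $\mathcal{M}(X,\mathcal{T})$ is a singleton $\{\mu\}$, and $\mu$ is then automatically ergodic.

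\emph{Zero entropy.} By the variational principle for $L$-actions, $h(\mathcal{T}) = h_\mu(\mathcal{T})$, so it suffices to rule out $h_\mu(\mathcal{T}) > 0$. Suppose $h_\mu(\mathcal{T}) > 0$ and fix $h \in (0, h_\mu(\mathcal{T}))$ and a small $\beta > 0$ with $h + \beta < h_\mu(\mathcal{T})$. Almost entropy-approximability from Theorem~\ref{theorem 1.3} yields a compact invariant $\Delta$ and some $\gamma \in (0,\varepsilon)$ with $h(\Delta,\mathcal{T},\gamma) < h+\beta$. Again $\Delta = X$, so
\[
h(X,\mathcal{T},\gamma) < h + \beta.
\]
This must be played against the fact that $h(X,\mathcal{T},\gamma)$ tends to $h(\mathcal{T}) = h_\mu(\mathcal{T})$ as $\gamma \to 0$, which does not immediately contradict the bound because $\gamma$ is handed to us rather than chosen.

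I expect this last step to be the main obstacle. My first approach is to use the freedom in $\varepsilon$. Fix $\beta$ and $h$ as above. Pick $\varepsilon$ so small that $h(X,\mathcal{T},\gamma') > h + \beta$ for all $\gamma' < \varepsilon$; this is possible since the scale-$\gamma'$ entropy is nondecreasing as $\gamma' \downarrow 0$ and converges to $h(\mathcal{T}) > h + \beta$. Since the theorem guarantees $\gamma < \varepsilon$, applying this bound at $\gamma' = \gamma$ gives $h(X,\mathcal{T},\gamma) > h+\beta$, contradicting $h(X,\mathcal{T},\gamma) < h+\beta$. Therefore $h(\mathcal{T}) = 0$.

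A simpler alternative also avoids scale issues: the almost entropy-approximable conclusion gives $h(\Delta,\mathcal{T}) > h$ with $\Delta = X$, while $\mathcal{M}(\Delta,\mathcal{T}) \subset U$ alone cannot bound the entropy from above. So the scale-entropy argument above is the one I would actually use.
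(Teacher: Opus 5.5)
Your proof is correct and follows essentially the paper's route (Corollaries \ref{cor 4.9} and \ref{cor 4.10}). The entropy part is the same scale argument: you fix the scale first so that $h(X,\mathcal{T},\gamma')$ exceeds the target bound for every smaller $\gamma'$, and minimality forces $\Delta = X$. The only cosmetic difference is in the unique-ergodicity part: you let minimality force a single $\Delta$ to equal $X$, which gives $\mathcal{M}(X,\mathcal{T})\subset U$ for every neighborhood $U$, whereas the paper builds two disjoint invariant sets near two distinct ergodic measures.
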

\section{Preliminaries}
Throughout this paper, we write $\mathbb{R}$ for the set of real numbers, $\mathbb{Z}$ for the set of integers, and set $\mathbb{N} := \{1,2,\dots\}$, $\mathbb{Z}_+ := \mathbb{N} \cup \{0\}$.

Let $(X, d)$ be a compact metric space. By a continuous $L$-action we mean a family 
$\mathcal{T} := \{ T^{{\mathbf{i}}} : X \to X \}_{{\mathbf{i}}\in L}$ of continuous maps satisfying $T^{{\mathbf{h}}+{\mathbf{k}}} = T^{{\mathbf{h}}} \circ T^{{\mathbf{k}}}$~~for all $\mathbf{h}, \mathbf{k} \in L$\quad and $T^0 =\operatorname{id}_X.$ 
For a subset $\Lambda \subset L$ and $\mathbf{k}\in L$, we write $\mathbf{k} + \Lambda := \{ \mathbf{j} \in L: \mathbf{j} = \mathbf{k} + \mathbf{l}, \mathbf{l} \in \Lambda \}$. The cardinality of a finite set $B$ is denoted by $|B|$.

A particularly important class of examples is given by \emph{shift spaces}. Here $X$ is a nonempty closed $\mathcal{T}$-invariant subset of $A^L$, where $A = \{0,\dots,b-1\}$ is a finite alphabet with $b > 1$ symbols. The alphabet $A$ is equipped with the discrete topology, and $A^L$ with the corresponding product topology. Any metric compatible with this topology may be employed. The action $\mathcal{T}$ is induced by translation in the index set $L$: for each $\mathbf{j} \in L$ and every $\omega \in X$, we define $(T^{\mathbf{j}} \omega)_{\mathbf{k}} := \omega_{\mathbf{j}+\mathbf{k}}$ for all $\mathbf{k} \in L$.

Let $C(X)$ denote the space of continuous real‑valued functions on $X$. The action $\mathcal{T}$ on $X$ naturally induces an action on $C(X)$ by $(T^{\mathbf{i}} f)(x) := f(T^{\mathbf{i}} x)$. For a Borel measurable function $f : X \to \mathbb{R}$ and a measure $\nu \in \mathcal{M}(X)$, we use the notation $\langle f,\nu\rangle := \int f \, d\nu$. The action of $\mathcal{T}$ on $\mathcal{M}(X)$ is then defined by $\langle f, T^{\mathbf{i}} \nu \rangle = \langle T^{\mathbf{i}} f, \nu \rangle$ for every $f \in C(X)$ and every $\mathbf{i} \in L$.
\begin{definition}\label{definition 2.1}
 Let $\Lambda_n$ denote $[-n, n]^d$ when $L = \mathbb{Z}^d$, and $\Lambda_n := [0, n]^d$ when $L = \mathbb{Z}_+^d$.
Let $D_n$ denote the width of $\Lambda_n$: $D_n = 2n + 1$ if $L = \mathbb{Z}^d$, and $D_n = n + 1$ if $L = \mathbb{Z}_+^d$. Define $V_n := |\Lambda_n| = D_n^d$. For $m, n \in \mathbb{N}$, we define 
\begin{align}\label{2.1}
 m * n := 
\begin{cases} 
mn + m + n & \text{if } L = \mathbb{Z}_+^d; \\
2mn + m + n & \text{if } L = \mathbb{Z}^d.
\end{cases}   
\end{align}
Observe that $\Lambda_{m * n}$ can be expressed as a disjoint union of $V_{m * n}/V_m = V_n$ translates of $\Lambda_m$.
\end{definition}
\subsection{Topological entropy and metric entropy}
In this subsection, we introduce the concepts of topological entropy, metric entropy and the variational principle for topological dynamical systems.
\begin{definition}\label{definition2.2}
For any nonempty finite set $\Lambda \subset L$, we introduce the metric $d_{\Lambda}$ on $X$
by 
$$d_{\Lambda}(x, y) := \max \{ d(T^{\mathbf{i}} x, T^{\mathbf{i}} y): \mathbf{i}\in \Lambda \}.$$
If $\Lambda = \emptyset$, we simply define $d_{\Lambda} \equiv 0$.
A subset $F \subset X$ is said to be $(\Lambda, \varepsilon)$-separated if for any distinct points $x, y \in F$, we have $d_{\Lambda}(x, y) > \varepsilon$. When $\Lambda = \Lambda_n$, we write $(n, \varepsilon)$-separated for brevity.
\end{definition}
Let $s(F, n, \varepsilon)$ denote the maximum cardinality of an $(n, \varepsilon)$-separated subset of $F$. Define
$$h(F, \mathcal{T}, \varepsilon):=\; \limsup_{n \to \infty} \frac{\ln s(F, n, \varepsilon)}{V_n}.$$
The topological entropy of $\mathcal{T}$ on the set $F$ is then given by
$$h(F, \mathcal{T}):=\; \lim_{\varepsilon \to 0} h(F, \mathcal{T}, \varepsilon).$$
In particular, $h(\mathcal{T}):= h(X, \mathcal{T})$ is referred to as the topological entropy of the system $(X, d, \mathcal{T})$.
For each $ n \in \mathbb{N}$, $d_{\Lambda_n}$ defines a metric on $X$. Observe that $h(F, \mathcal{T}, \varepsilon)$ is non-decreasing as $\varepsilon\to 0$. Hence, we have
\begin{align}\label{2.2}
h(F, \mathcal{T}) = \sup \{ h(F, \mathcal{T}, \varepsilon) : \varepsilon > 0 \}. 
\end{align}
\begin{definition}\label{definition 2.3}
Given $\varepsilon > 0$, a set of the form
$$B_n(x, \varepsilon) = \{y \in X : d_{\Lambda_{n}}(x, y)< \varepsilon\}$$
is referred to as an \emph{$(n,\varepsilon)$-ball} of the system $(X, \mathcal{T})$.
\end{definition} 
\begin{definition}\cite[Theorem 1.1]{K80}\label{definition 2.4}
Let $\mu\in\mathcal{M}(X, \mathcal{T})$ and fix $\delta \in (0,1)$. Denote
$$r_\mu(n,\varepsilon,\delta) := \min \left\{ |\mathcal{U}| : 
\mathcal{U} \text{ is a collection of } (n,\varepsilon)\text{-balls such that } 
\mu\!\left(\bigcup_{U \in \mathcal{U}} U\right) > 1 - \delta \right\},$$
where $|\mathcal{U}|$ denotes the cardinality of $\mathcal{U}$. The \emph{metric entropy} of $(X, d, \mathcal{T})$ with respect to $\mu$ is then defined as 
$$h_\mu(\mathcal{T}) := \lim_{\varepsilon \to 0} \limsup_{n \to \infty} \frac{\ln r_\mu(n,\varepsilon,\delta)}{V_n} = \lim_{\varepsilon \to 0} \liminf_{n \to \infty} \frac{\ln r_\mu(n,\varepsilon,\delta)}{V_n}.$$
\end{definition}
The entropy map $\mu \mapsto h_\mu(\mathcal{T})$ is thus defined on $\mathcal{M}(X, \mathcal{T})$.
\begin{proposition}\cite[Theorem 8.1]{W82}\label{proposition 2.5}
For any $\mu,\nu\in\mathcal{M}( X,\mathcal{T})$ and $\lambda\in[0,1]$, we have
$$h_{\lambda\mu+(1-\lambda)\nu}(\mathcal{T})=\lambda h_{\mu}(\mathcal{T})+(1-\lambda)h_{\nu}(\mathcal{T}).$$
\end{proposition}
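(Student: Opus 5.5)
Proposition \ref{proposition 2.5} is the affinity of the entropy map, and I would prove it as in the classical single-map case (cf.\ \cite[Theorem 8.1]{W82}), using partition entropies along the Følner sequence $\{\Lambda_n\}$. Write $H_\mu(\xi)$ for the static entropy of a finite Borel partition $\xi$, put $\xi^{\Lambda_n}:=\bigvee_{\mathbf{i}\in\Lambda_n}T^{-\mathbf{i}}\xi$ and $h_\mu(\mathcal{T},\xi):=\lim_n \frac{1}{V_n}H_\mu(\xi^{\Lambda_n})$. The limit exists by subadditivity along the cubes (the $m*n$ decomposition of Definition \ref{definition 2.1}). The one nonstandard input is that the Katok-type definition in Definition \ref{definition 2.4} agrees with $\sup_\xi h_\mu(\mathcal{T},\xi)$ for $\mathbb{Z}^d$ or $\mathbb{Z}_+^d$ actions. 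For ergodic $\mu$ this is Katok's formula together with the Shannon--McMillan--Breiman theorem for amenable actions (Ornstein--Weiss/Lindenstrauss). For non-ergodic $\mu$ I would take the partition definition as primary and view Definition \ref{definition 2.4} as the quoted equivalent. I therefore prove affinity for the partition-based $h_\mu$.

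\textbf{Step 1: fixed partition, finite stage.} Let $m=\lambda\mu+(1-\lambda)\nu$ with $\lambda\in(0,1)$; the endpoints are trivial. Concavity of $\phi(t)=-t\ln t$ gives $H_m(\eta)\ge \lambda H_\mu(\eta)+(1-\lambda)H_\nu(\eta)$ for every finite partition $\eta$. For the reverse bound, use $\phi(a+b)\le\phi(a)+\phi(b)$ and $\phi(\lambda t)=\lambda\phi(t)+t\phi(\lambda)$. For each atom $A$ this yields $\phi(m(A))\le \lambda\phi(\mu(A))+(1-\lambda)\phi(\nu(A))+\mu(A)\phi(\lambda)+\nu(A)\phi(1-\lambda)$. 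Summing over the atoms gives
\[
\lambda H_\mu(\eta)+(1-\lambda)H_\nu(\eta)\le H_m(\eta)\le \lambda H_\mu(\eta)+(1-\lambda)H_\nu(\eta)+\ln 2 .
\]

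\textbf{Step 2: pass to the limit.} Apply Step 1 with $\eta=\xi^{\Lambda_n}$, divide by $V_n$ and let $n\to\infty$. The additive constant $\ln 2$ vanishes, so $h_m(\mathcal{T},\xi)=\lambda h_\mu(\mathcal{T},\xi)+(1-\lambda)h_\nu(\mathcal{T},\xi)$ for every finite partition $\xi$.

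\textbf{Step 3: take the supremum over partitions.} Taking the supremum immediately gives $h_m\le\lambda h_\mu+(1-\lambda)h_\nu$. For the reverse inequality, given partitions $\xi_1,\xi_2$ that nearly attain $h_\mu$ and $h_\nu$, set $\xi=\xi_1\vee\xi_2$. Refinement does not decrease $h(\cdot,\xi)$, so
\[
h_m\ge h_m(\mathcal{T},\xi)=\lambda h_\mu(\mathcal{T},\xi)+(1-\lambda)h_\nu(\mathcal{T},\xi)\ge \lambda h_\mu(\mathcal{T},\xi_1)+(1-\lambda)h_\nu(\mathcal{T},\xi_2).
\]
The infinite-entropy cases follow from the same inequalities.

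The main obstacle is not this calculation but reconciling the partition definition with the covering-number definition of Definition \ref{definition 2.4} for non-ergodic measures. The direct way around it is the ergodic decomposition: $h_\mu=\int h_{\mu_x}\,d\mu(x)$ holds for amenable group actions. Since the ergodic decomposition of $\lambda\mu+(1-\lambda)\nu$ is the corresponding convex combination of the decompositions of $\mu$ and $\nu$, this gives affinity at once. Either route relies on a standard result for $\mathbb{Z}^d$-actions, which I would cite rather than reprove.
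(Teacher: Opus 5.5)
Your argument is correct and is precisely the proof behind the cited \cite[Theorem 8.1]{W82}, which is all the paper offers: the bound $0\le H_m(\eta)-\lambda H_\mu(\eta)-(1-\lambda)H_\nu(\eta)\le\ln 2$, the passage to the limit along $\xi^{\Lambda_n}$, and the join $\xi_1\vee\xi_2$ for the reverse inequality, carried over from $\mathbb{Z}$ to the cubes $\Lambda_n$. One correction to your framing: for non-ergodic measures Definition \ref{definition 2.4} is \emph{not} equivalent to the partition entropy, so there is no standard result to cite there. To see this, take $\mu=\tfrac12\mu_1+\tfrac12\mu_2$ with $\mu_1,\mu_2$ ergodic, $h_{\mu_1}(\mathcal{T})=0<h_{\mu_2}(\mathcal{T})$ and a fixed $\delta<\tfrac12$: any admissible family of $(n,\varepsilon)$-balls must carry $\mu_2$-mass $>1-2\delta$, so the covering formula gives at least $h_{\mu_2}(\mathcal{T})$ instead of $\tfrac12 h_{\mu_2}(\mathcal{T})$. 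Reading $h_\mu$ as the Kolmogorov--Sinai entropy (equivalently $\int h_{\mu_x}\,d\mu(x)$, with Katok's formula used only on ergodic components) is therefore forced for the proposition to hold, not merely a convenience.
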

\begin{proposition}\cite[Theorem 8.6]{W82}\label{proposition 2.6}
Let $(X, d, \mathcal{T})$ be a topological dynamical system. Then
$$h(\mathcal{T}) = \sup \{ h_\mu(\mathcal{T}) : \mu \in \mathcal{M}(X, \mathcal{T}) \} = \sup \{ h_\mu(\mathcal{T}) : \mu \in \mathcal{M}_e(X, \mathcal{T})\}.$$
\end{proposition}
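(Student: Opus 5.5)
The statement is the variational principle for continuous $L$-actions, Proposition~\ref{proposition 2.6}, which the paper cites from \cite[Theorem 8.6]{W82}. I would establish the two equalities separately.

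\textbf{First equality (ergodic supremum).} Given $\mu\in\mathcal{M}(X,\mathcal{T})$, take its ergodic decomposition $\mu=\int \mu_\omega\,d\tau(\omega)$. The entropy map is affine by Proposition~\ref{proposition 2.5}, and it extends to integrals, $h_\mu=\int h_{\mu_\omega}\,d\tau$, by the Jacobs-type argument: approximate the decomposition by finite convex combinations via conditional entropies $H_\mu(\xi_{\Lambda_n})$, and use that $\frac1{V_n}H(\xi_{\Lambda_n})$ is affine up to an error of at most $\ln 2/V_n$. It follows that some ergodic component has $h_{\mu_\omega}\ge h_\mu-\varepsilon$. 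Hence
\[
\sup_{\mathcal{M}}h_\mu=\sup_{\mathcal{M}_e}h_\mu .
\]

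\textbf{Inequality $h_\mu\le h(\mathcal{T})$.} Fix a finite Borel partition $\xi=\{A_1,\dots,A_k\}$. Choose compact sets $B_i\subset A_i$ with small $\mu(A_i\setminus B_i)$, and let $B_0$ be the complement of their union. Then $\eta=\{B_0,\dots,B_k\}$ satisfies $H_\mu(\xi\mid\eta)<1$. Any two distinct $B_i,B_j$ with $i,j\ge 1$ are at distance greater than some $\delta>0$. Consequently each $(\Lambda_n,\delta/2)$-ball meets at most $2^{V_n}$ atoms of $\eta_{\Lambda_n}=\bigvee_{\mathbf i\in\Lambda_n}T^{-\mathbf i}\eta$. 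This gives
\[
H_\mu(\eta_{\Lambda_n})\le \ln s(X,n,\delta/2)+V_n\ln 2 .
\]
Dividing by $V_n$ and using $h_\mu(\xi)\le h_\mu(\eta)+H_\mu(\xi\mid\eta)$ yields
\[
h_\mu(\xi)\le h(\mathcal{T})+\ln 2+1 .
\]
To remove the additive constant, replace $\mathcal{T}$ by the $\mathbb{Z}^d$-action generated by the $m$-block subactions. Tiling $\Lambda_{m*n}$ by translates of $\Lambda_m$ as in Definition~\ref{definition 2.1}, entropies of both kinds scale by $V_m$, while the constant $\ln2+1$ does not. Letting $m\to\infty$ gives $h_\mu\le h(\mathcal{T})$. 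Alternatively, working directly with the Katok definition~\ref{definition 2.4}, one has $r_\mu(n,\varepsilon,\delta)\le s(X,n,\varepsilon)$, since a maximal separated set gives a spanning family of balls. This gives the bound immediately, provided one accepts Definition~\ref{definition 2.4} as the definition of $h_\mu$. I would adopt this shortcut.

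\textbf{Inequality $\sup_\mu h_\mu\ge h(\mathcal{T})$ (the main obstacle).} This is Misiurewicz's argument. Fix $\varepsilon>0$ and maximal $(n,\varepsilon)$-separated sets $E_n$ along a subsequence realizing the $\limsup$ in $h(X,\mathcal{T},\varepsilon)$. Define
\[
\sigma_n=\frac{1}{|E_n|}\sum_{x\in E_n}\delta_x,\qquad
\mu_n=\frac1{V_n}\sum_{\mathbf i\in\Lambda_n}T^{\mathbf i}\sigma_n .
\]
Take a weak$^*$ limit $\mu$. It is invariant because $\{\Lambda_n\}$ is a Følner sequence.

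Choose a partition $\xi$ with diameter less than $\varepsilon$ and $\mu(\partial\xi)=0$. Then each atom of $\xi_{\Lambda_n}$ contains at most one point of $E_n$, so
\[
H_{\sigma_n}(\xi_{\Lambda_n})=\ln|E_n| .
\]
Fix $m$ and tile $\Lambda_n$ by translates $\mathbf j+\Lambda_m$, up to a boundary region of size $o(V_n)$, using each of the $V_m$ offsets. Subadditivity and concavity of $H$ then give
\[
\frac{\ln|E_n|}{V_n}\le \frac{1}{V_m}H_{\mu_n}(\xi_{\Lambda_m})+o(1).
\]
Passing $n\to\infty$ uses $\mu(\partial\xi)=0$ for continuity of $H_{\cdot}(\xi_{\Lambda_m})$. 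Then letting $m\to\infty$ gives
\[
h_\mu\ge h_\mu(\xi)\ge h(X,\mathcal{T},\varepsilon).
\]
Finally let $\varepsilon\to0$ and apply~\eqref{2.2}.

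The delicate point is the $d$-dimensional tiling bookkeeping, namely controlling the boundary of $\Lambda_n$ relative to the $m$-blocks. For the $\mathbb{Z}_+^d$ case, one should check that the Følner property $|\partial\Lambda_n|/V_n\to0$ and invariance still hold with the one-sided translates.
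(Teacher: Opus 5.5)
Your overall route (an ergodic-decomposition formula for the ergodic supremum, a covering bound for $h_\mu(\mathcal{T})\le h(\mathcal{T})$, and Misiurewicz's construction for the reverse inequality) is the standard proof behind \cite[Theorem 8.6]{W82}. The paper only cites that result and gives no argument of its own. Your Misiurewicz step works for both $L=\mathbb{Z}^d$ and $L=\mathbb{Z}_+^d$. So does your first, partition-based bound, with the $V_m$-scaling of the block subaction coming from $V_{m*n}=V_mV_n$.

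The gap is the shortcut you say you would adopt instead. It bounds the covering quantity of Definition~\ref{definition 2.4}, while your other two steps are about the Kolmogorov--Sinai entropy $\sup_\xi h_\mu(\xi)$. The Jacobs/affinity step is a statement about that quantity, and the last line $h_\mu\ge h_\mu(\xi)$ of the Misiurewicz step holds by definition only for it. The two notions coincide for ergodic $\mu$, but that is Katok's entropy formula, which for $\mathbb{Z}^d$ rests on the multidimensional Shannon--McMillan--Breiman theorem; it is not free.

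For non-ergodic $\mu$ the two notions genuinely differ. Take $\mu=\frac12(\mu_1+\mu_2)$ with $\mu_1,\mu_2$ ergodic and $h_{\mu_1}(\mathcal{T})=0<h_{\mu_2}(\mathcal{T})$. Then $r_\mu(n,\varepsilon,\delta)\ge r_{\mu_2}(n,\varepsilon,2\delta)$ for $\delta<\frac12$, and $r_\mu(n,\varepsilon,\delta)\le r_{\mu_1}(n,\varepsilon,2\delta-1)$ for $\delta>\frac12$. So the growth rate in Definition~\ref{definition 2.4} is either at least $h_{\mu_2}(\mathcal{T})$ or equal to $0$, depending on $\delta$. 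It is never the value $\frac12h_{\mu_2}(\mathcal{T})$ forced by Proposition~\ref{proposition 2.5}. Hence ``accepting Definition~\ref{definition 2.4} as the definition'' is incompatible with the affinity your first step uses. Since the measure produced by Misiurewicz's construction is in general not ergodic, your three steps as written do not bound one and the same quantity.

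The repair is easy, and there are two ways to do it:
\begin{itemize}
\item Keep the partition argument you wrote first. It is consistent with the other two steps and is exactly Walters' route.
\item Alternatively, apply the covering bound only to ergodic measures, identify it with $h_\mu(\mathcal{T})$ there via Katok's formula, and pass to general $\mu$ through your first step.
\end{itemize}

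Two smaller points. First, a maximal $(n,\varepsilon)$-separated set covers $X$ only by closed $\varepsilon$-balls. With the open balls of Definition~\ref{definition 2.3} you get $r_\mu(n,2\varepsilon,\delta)\le s(X,n,\varepsilon)$, which is harmless. Second, your sketch of the Jacobs step is not enough on its own. Affinity on finite convex combinations only yields $h_\mu=\sum_i\lambda_ih_{\mu_i}$ for the barycenters $\mu_i$ of pieces of the decomposition, and these are not ergodic. Reaching the components $\mu_\omega$ needs the genuine limiting argument of Jacobs' theorem in its $\mathbb{Z}^d$ version, which you should simply cite.
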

\subsection{Topological expansiveness}
In \cite{RS16}, Ren and Sun defined the notion of asymptotic entropy expansiveness for amenable group actions and proved that the entropy function is upper semi-continuous on the space of invariant Borel probability measures for asymptotically entropy expansive amenable group actions. In this subsection, we introduce some notation and results on entropy expansiveness for a continuous $L$-action $\mathcal{T}$.
\begin{definition}\label{definition 2.7}
A subset $E\subset X$ is said to be an \emph{$(n,\varepsilon)$-spanning set} if it satisfies
$$X = \bigcup_{x \in E} B_n(x, \varepsilon).$$ 
We denote by $r(n, \varepsilon)$ the minimum cardinality of an $(n, \varepsilon)$-spanning subset of $X$. 
\end{definition} 
\begin{definition}\cite[Definition 2.3]{S25} and \cite[Definition 3.1]{RS16}\label{definition 2.8}
For $\varepsilon > 0$ and $x \in X $, denote
$$\Gamma_\varepsilon(x) := \left\{ y \in X : d(T^{\mathbf{i}}(x), T^{\mathbf{i}}(y)) < \varepsilon \text{ for all } \mathbf{i} \in L\right\}.$$
Let
$$h^*(\mathcal{T}, \varepsilon) := \sup\{h(\Gamma_\varepsilon(x), \mathcal{T}):x \in X\}.$$
\begin{enumerate}
\item The system $(X, d, \mathcal{T})$ is \emph{expansive} if there exists $\varepsilon_0 > 0$ such that $\Gamma_{\varepsilon_0}(x) = \{x\}$ for every $ x\in X$.
\item The system $(X, d, \mathcal{T})$ is \emph{entropy expansive} if there exists $\varepsilon_0 > 0$ such that $h^*(\mathcal{T}, \varepsilon_0) = 0$.
\item The system $(X, d, \mathcal{T})$ is \emph{asymptotically entropy expansive} if
$$\lim_{\varepsilon \to 0} h^*(\mathcal{T}, \varepsilon) = 0.$$ 
\end{enumerate}
\end{definition}
\begin{proposition}\cite[Theorem 2]{RS16}\label{proposition 2.9}
For any subset $F \subset X$ and any $\varepsilon > 0$, we have
$$h(F, \mathcal{T}) \leq h(F, \mathcal{T}, \varepsilon) + h^*(\mathcal{T}, \varepsilon).$$
\end{proposition}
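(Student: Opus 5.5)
The statement immediately above is Proposition \ref{proposition 2.9}, the Bowen-type inequality $h(F,\mathcal{T})\le h(F,\mathcal{T},\varepsilon)+h^*(\mathcal{T},\varepsilon)$. I would prove it by a covering argument in the spirit of Bowen's original proof for $\mathbb{Z}$-actions, adapted to the cubes $\Lambda_n$. Fix $\varepsilon>0$ and a small $\delta>0$, and set $a:=h^*(\mathcal{T},\varepsilon)$. We may assume $a<\infty$, otherwise there is nothing to prove.

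The first step is a uniform local estimate. For each $x\in X$, since $h(\Gamma_\varepsilon(x),\mathcal{T},\delta)\le a$, there exist $m=m(x)$ and a finite family of $(\Lambda_m,\delta)$-balls covering $\Gamma_\varepsilon(x)$ with cardinality at most $e^{V_m(a+\gamma)}$. Since $\Gamma_\varepsilon(x)=\bigcap_{N}\{y: d_{\Lambda_N\cap L}(x,y)\le\varepsilon\}$ up to boundary issues, compactness gives an $N(x)$ such that the whole set $\{y: d_{\Lambda_{N(x)}}(T^{\mathbf{j}}x,T^{\mathbf{j}}y)\le \varepsilon\}$ is contained in the open union of those balls. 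Here a small enlargement to $\varepsilon'$ slightly bigger than $\varepsilon$ is handled by monotonicity. The key convenience is to work with translates: $\Gamma$ is defined along all of $L$, so a point staying $\varepsilon$-close to $x$ on a large window centered at $\mathbf{0}$ must also be close on the smaller cube $\Lambda_m$. A further compactness argument in $x$, via a Lebesgue number, gives finitely many scales. In the end there are uniform $m$ and $N\gg m$ with the following property: for every $z$, the set $\{y : d_{\Lambda_{N}}(z,y)\le\varepsilon\}$ is covered by at most $e^{V_m(a+\gamma)}$ sets of $d_{\Lambda_m}$-diameter at most $2\delta$. Here $\Lambda_N$ is centered suitably around $\Lambda_m$; for $\mathbb{Z}_+^d$ the window should be $\Lambda_N$ itself, since the containment only needs forward coordinates.

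The second step is tiling. Let $E$ be a maximal $(n,\varepsilon)$-separated subset of $F$, so that $|E|\le s(F,n,\varepsilon)$ and the $\varepsilon$-closed $d_{\Lambda_n}$-balls around $E$ cover $F$. Inside $\Lambda_n$, tile a subcube of side roughly $D_n-2N$ by disjoint translates $\mathbf{k}+\Lambda_m$; there are about $V_n/V_m$ of them, each with its enlarged window $\mathbf{k}+\Lambda_N$ inside $\Lambda_n$. For $y$ in the ball about $e\in E$, apply the first step with $z=T^{\mathbf{k}}e$ on each tile. Choosing one cover element per tile, and covering the boundary layer by a fixed $(\cdot,\delta)$-cover of $X$, shows that the $d_{\Lambda_n}$-ball about $e$ is covered by at most
\[
e^{(V_n/V_m)V_m(a+\gamma)}\cdot r(1,\delta)^{|\Lambda_n\setminus\text{tiles}|}
\]
sets of $d_{\Lambda_n}$-diameter at most $2\delta$. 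The boundary layer has cardinality $o(V_n)$ as $n\to\infty$ with $N$ fixed, so its contribution vanishes in the exponent. Hence
\[
s(F,n,4\delta)\le s(F,n,\varepsilon)\,e^{V_n(a+\gamma)+o(V_n)},
\]
which gives $h(F,\mathcal{T},4\delta)\le h(F,\mathcal{T},\varepsilon)+a+\gamma$. Letting $\gamma\to0$ and then $\delta\to0$, and using \eqref{2.2}, yields the claim.

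The main obstacle is the first step: passing from the infinite-window set $\Gamma_\varepsilon(x)$ to a finite window in a way that is uniform in $x$, and making the finite-scale covers compatible with translation. I would follow Bowen's trick. Use upper semicontinuity in $x$ of the closed-window sets, take an open neighbourhood for each $x$ of the form $\{y: d_{\Lambda_{N}}(x,y)<\varepsilon+\eta\}$, and extract a finite subcover, so that a single pair $(m,N)$ works for all $x$. One must then absorb the discrepancy between $\le\varepsilon$ and $<\varepsilon+\eta$. This can be done by proving the inequality with $h^*(\mathcal{T},\varepsilon')$ for $\varepsilon'<\varepsilon$ slightly smaller and using monotonicity of $h^*$ in $\varepsilon$, or by noting that separated sets at scale $\varepsilon$ are also separated at scale $\varepsilon'$.
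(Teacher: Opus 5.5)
Your overall strategy is Bowen's covering argument, with the time interval replaced by a tiling of $\Lambda_n$ by translates of one cube, and it is the natural route. The counting in your second step is fine once a uniform local estimate is available. (The paper gives no proof here; it only quotes the result from \cite{RS16}.) Your first step, however, has two genuine gaps.

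\textbf{Strict versus non-strict inequality.} Your compactness step needs the open union of balls to contain $\bigcap_N\{y: d_{\Lambda_N}(x,y)\le\varepsilon\}$. This set is $\overline{\Gamma}_\varepsilon(x):=\{y: d(T^{\mathbf i}x,T^{\mathbf i}y)\le\varepsilon \text{ for all } \mathbf i\in L\}$. The hypothesis only bounds the entropy of $\Gamma_\varepsilon(x)$, which Definition 2.8 defines with $<\varepsilon$, and that set can be much smaller.

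Both of your repairs run against monotonicity:
\begin{itemize}
\item Enlarging to $\varepsilon'>\varepsilon$ brings in $h^*(\mathcal T,\varepsilon')\ge h^*(\mathcal T,\varepsilon)$.
\item Working at a window scale $\varepsilon'<\varepsilon$ forces you to count centres by $s(F,n,\varepsilon')\ge s(F,n,\varepsilon)$. The remark that $\varepsilon$-separated sets are $\varepsilon'$-separated only gives this same, useless, inequality.
\end{itemize}

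In fact no repair exists. Take the one-sided full $2$-shift with $d(x,y)=2^{-\min\{i:\,x_i\ne y_i\}}$ and $\varepsilon=1$. No two points are $(n,1)$-separated, so $h(X,\mathcal T,1)=0$. Also $\Gamma_1(x)=\{x\}$, so $h^*(\mathcal T,1)=0$. Yet $h(X,\mathcal T)=\ln 2$.

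What a Bowen-type argument actually proves is $h(F,\mathcal T)\le h(F,\mathcal T,\varepsilon)+\sup_x h(\overline{\Gamma}_\varepsilon(x),\mathcal T)$; Bowen's own formulation uses $\le\varepsilon$. Hence $h(F,\mathcal T)\le h(F,\mathcal T,\varepsilon)+h^*(\mathcal T,\varepsilon')$ for every $\varepsilon'>\varepsilon$. This is all Proposition 5.1 needs, since there one can insert some $\gamma'\in(\gamma,\varepsilon_0)$.

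\textbf{A uniform scale when $d\ge 2$.} Set $a:=\sup_x h(\overline{\Gamma}_\varepsilon(x),\mathcal T)$. The Lebesgue-number argument gives finitely many centres $x_1,\dots,x_r$, each with its own scale $m(x_i)$. It does not give a common $m$. The neighbourhood on which a cover works is determined only after $m$ is chosen, and $m$ must exceed the thresholds of centres that are not yet known.

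In $d=1$, Bowen handles variable scales by concatenating blocks of lengths $m(x_{i_j})$ greedily along the orbit. For $d\ge 2$, cubes of different sizes anchored at point-dependent positions cannot be concatenated this way. So the single tile $\Lambda_m$ in your second step is unjustified.

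One way to close this gap is as follows.
\begin{itemize}
\item Let $\phi_m(y)$ be the least number of open sets of $d_{\Lambda_m}$-diameter $<3\delta$ that cover $\{z: d_{\Lambda_N}(y,z)\le\varepsilon\}$ for some $N$, and set $g_m:=V_m^{-1}\ln\phi_m$.
\item Then $g_m$ is upper semicontinuous. Moreover $0\le g_m\le\ln r$, where $r$ is the size of a cover of $X$ by open sets of $d$-diameter $<3\delta$. Finally $\limsup_m g_m(x)\le a$ for every $x$.
\item Tile $\Lambda_M$ by translates of $\Lambda_m$ and average over all offsets. This gives $g_M(y)\le\langle g_m,\mathcal{E}_M(y)\rangle+2dD_mD_M^{-1}\ln r$.
\item Suppose $\max_y g_M>a+\gamma$ for every $M$, attained at $y_M$. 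A weak$^*$ limit $\mu$ of $\mathcal{E}_M(y_M)$ is invariant, and by upper semicontinuity $\int g_m\,d\mu\ge a+\gamma$ for all $m$. This contradicts reverse Fatou, which gives $\limsup_m\int g_m\,d\mu\le a$.
\item Once a good $m$ is fixed, compactness gives a uniform window $N$.
\end{itemize}
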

\begin{proposition}\cite[Theorem 1]{RS16}\label{proposition 2.10}
Let $(X, d, \mathcal{T})$ be asymptotically entropy expansive. Then the entropy map $\mu \mapsto h_\mu(\mathcal{T})$ is upper semi-continuous with respect to the weak-$*$ topology on $\mathcal{M}(X, \mathcal{T})$. Consequently, there exists $\tilde{\mu} \in \mathcal{M}_e(X, \mathcal{T})$, called a measure of maximal entropy, such that $h_{\tilde{\mu}}(\mathcal{T}) = h(\mathcal{T})$.
\end{proposition}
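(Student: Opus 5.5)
The statement is the $\mathbb{Z}^d$ (or $\mathbb{Z}_+^d$) version of Misiurewicz's theorem. I would prove it in two parts: first the upper semi-continuity, by the classical partition argument, and then the existence of an ergodic measure of maximal entropy from the variational principle, compactness and affinity.

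\emph{Upper semi-continuity.} Fix $\mu\in\mathcal{M}(X,\mathcal{T})$ and $\varepsilon>0$. Choose a finite Borel partition $\xi$ of $X$ with $\operatorname{diam}\xi<\varepsilon$ and $\mu(\partial\xi)=0$; this is possible because for each point all but countably many radii $r$ give $\mu(\partial B(x,r))=0$. Write $\xi^{\Lambda_n}:=\bigvee_{\mathbf{i}\in\Lambda_n}T^{-\mathbf{i}}\xi$.
\begin{itemize}
\item Each map $\nu\mapsto H_\nu(\xi^{\Lambda_n})$ is continuous at $\mu$, since the atoms of $\xi^{\Lambda_n}$ have $\mu$-null boundaries.
\item The quantity $h_\nu(\mathcal{T},\xi)=\lim_n \frac{1}{V_n}H_\nu(\xi^{\Lambda_n})$ can be written as an infimum, up to an error tending to $0$. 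To see this, tile $\Lambda_{m*n}$ by $V_n$ translates of $\Lambda_m$ as in Definition~\ref{definition 2.1}. Subadditivity of $H_\nu$ and invariance of $\nu$ then give
\[
\tfrac{1}{V_{m*n}}H_\nu(\xi^{\Lambda_{m*n}})\le \tfrac{1}{V_m}H_\nu(\xi^{\Lambda_m}).
\]
Together with the monotonicity of $H_\nu(\xi^{\Lambda_k})$ in $k$ and the fact that $\Lambda_k$ is a F{\o}lner sequence, this shows $h_\nu(\mathcal{T},\xi)\le \frac{1}{V_m}H_\nu(\xi^{\Lambda_m})$ for every $m$.
\item Consequently $\nu\mapsto h_\nu(\mathcal{T},\xi)$ is upper semi-continuous at $\mu$, being an infimum of functions continuous at $\mu$.
\end{itemize}

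\emph{The key local inequality.} This is the step I expect to be the main obstacle. For every $\nu$ and every partition $\xi$ with $\operatorname{diam}\xi<\varepsilon$, I want
\[
h_\nu(\mathcal{T})\le h_\nu(\mathcal{T},\xi)+h^*(\mathcal{T},\varepsilon).
\]
I would prove it by Bowen's argument adapted to $L$-actions, in the same spirit as the proof of Proposition~\ref{proposition 2.9}. Take a fine partition $\eta$. Each atom of $\xi^{\Lambda_n}$ lies in a Bowen $(\Lambda_n,\varepsilon)$-ball. A compactness argument then shows that, for large $N$, such a ball is covered by about $e^{V_n(h^*(\mathcal{T},\varepsilon)+\delta)}$ sets of the form $(\Lambda_n,\eta)$-names. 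One first proves this on blocks $\Lambda_N$ and then extends it to $\Lambda_n$ by tiling $\Lambda_n$ with translates of $\Lambda_N$. It follows that $H_\nu(\eta^{\Lambda_n}\mid\xi^{\Lambda_n})\le V_n(h^*(\mathcal{T},\varepsilon)+\delta)+o(V_n)$, and letting $\eta$ refine gives the inequality. The boundary effects of the tiling are negligible because $|\partial\Lambda_n|/V_n\to 0$.

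Given the local inequality, for $\nu_k\to\mu$ we get
\[
\limsup_k h_{\nu_k}(\mathcal{T})\le \limsup_k h_{\nu_k}(\mathcal{T},\xi)+h^*(\mathcal{T},\varepsilon)\le h_\mu(\mathcal{T},\xi)+h^*(\mathcal{T},\varepsilon)\le h_\mu(\mathcal{T})+h^*(\mathcal{T},\varepsilon).
\]
Letting $\varepsilon\to0$ and using asymptotic entropy expansiveness gives upper semi-continuity.

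\emph{Existence of an ergodic measure of maximal entropy.} By Proposition~\ref{proposition 2.6}, pick $\mu_k$ with $h_{\mu_k}(\mathcal{T})\to h(\mathcal{T})$. By weak-$*$ compactness of $\mathcal{M}(X,\mathcal{T})$ and upper semi-continuity, some limit point $\mu$ satisfies $h_\mu(\mathcal{T})=h(\mathcal{T})$.
\begin{itemize}
\item The set $K$ of measures of maximal entropy is therefore nonempty.
\item $K$ is compact, since it equals the superlevel set $\{h_\nu(\mathcal{T})\ge h(\mathcal{T})\}$ of an upper semi-continuous function.
\item $K$ is convex by Proposition~\ref{proposition 2.5}.
\item $K$ is a face of $\mathcal{M}(X,\mathcal{T})$: if $\lambda\nu_1+(1-\lambda)\nu_2\in K$ with $0<\lambda<1$, then affinity and $h_{\nu_i}(\mathcal{T})\le h(\mathcal{T})$ force $\nu_1,\nu_2\in K$.
\end{itemize}
By Krein--Milman, $K$ has an extreme point $\tilde\mu$. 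Because $K$ is a face, $\tilde\mu$ is also extreme in $\mathcal{M}(X,\mathcal{T})$, hence ergodic.
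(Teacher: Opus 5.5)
The paper gives no proof of this statement; it imports it from \cite{RS16}. Most of your outline is the standard Misiurewicz argument and is sound:
\begin{itemize}
\item the partition with $\mu(\partial\xi)=0$ and the continuity of $\nu\mapsto H_\nu(\xi^{\Lambda_m})$ at $\mu$;
\item the formula $h_\nu(\mathcal{T},\xi)=\inf_m V_m^{-1}H_\nu(\xi^{\Lambda_m})$, obtained from $V_{m*n}=V_mV_n$;
\item the deduction of upper semi-continuity from the local inequality;
\item the face/Krein--Milman step, which needs $h(\mathcal{T})<\infty$; this holds by Proposition~\ref{proposition 2.9} with $F=X$.
\end{itemize}

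\textbf{The gap.} The gap is exactly the step you flagged. Your sketch of $h_\nu(\mathcal{T})\le h_\nu(\mathcal{T},\xi)+h^*(\mathcal{T},\varepsilon)$ is Bowen's one-dimensional argument, and it does not transfer. Compactness does not give a single block $\Lambda_N$ that works at every centre. For each $x$ it gives:
\begin{itemize}
\item a size $m(x)$ at which $\Gamma_\varepsilon(x)$ is covered by $e^{V_{m(x)}(h^*+\gamma)}$ $(\Lambda_{m(x)},\rho)$-balls;
\item a larger window $k(x)$;
\item a neighborhood $V_x$ on which the $(\Lambda_{k(x)},\varepsilon)$-Bowen balls lie in the union of those balls.
\end{itemize}
Since $V_x$ depends on $m(x)$, a finite subcover leaves several different sizes. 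The size you may use at a site $\mathbf{t}$ is dictated by which $V_{x_j}$ contains $T^{\mathbf{t}}z$. The set of $x$ for which every $m\ge M$ works is only a $G_\delta$, so compactness yields no uniform $M$.

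Bowen chains variable-length blocks greedily along a forward orbit. Admissible cubes of location-dependent sizes, however, can be forced to miss a positive proportion of sites. This already happens for $L=\mathbb{Z}$ with the symmetric windows $\mathbf{t}+[-m,m]$ used here. Suppose even sites admit only $m=2$ and odd sites only $m=1$. Then every admissible window starts and ends at an even site, so any disjoint family misses at least one site in six. Each missed site costs a factor $r(\rho)$, which is not small. So ``prove it on $\Lambda_N$, then tile'' is circular: the uniform bound on $\Lambda_N$ is itself this packing problem. Separately, counting $\eta$-names that meet a small set carries the partition-boundary loss that Bowen removes by passing to powers, and you would need an analogue of that as well.

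\textbf{A repair.} You need the inequality only for one measure $\nu$, so uniformize in measure rather than topologically.
\begin{itemize}
\item Work with $\bar\Gamma_\varepsilon(x)\subset\Gamma_{\varepsilon'}(x)$ for $\varepsilon'>\varepsilon$; note that $\Gamma_\varepsilon$ is defined by a strict inequality.
\item Let $E_{M,K}$ be the set of $y$ whose closed $(\Lambda_K,\varepsilon)$-Bowen ball is covered by at most $e^{V_M(h^*(\mathcal{T},\varepsilon')+\gamma)}$ open $(\Lambda_M,\rho)$-balls. This set is open and increasing in $K$.
\item Every $x$ lies in $\bigcup_K E_{M,K}$ for all large $M$. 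Hence you can fix $M,K$ with $\nu(E_{M,K})>1-\theta$.
\item For ergodic $\nu$, the ergodic theorem along cubes shows that for typical $z$, all but about $\theta V_n$ sites $\mathbf{t}\in\Lambda_n$ satisfy $T^{\mathbf{t}}z\in E_{M,K}$.
\item By pigeonhole, some translate of the grid $D_M\mathbb{Z}^d$ has mostly good corners. This covers the $\xi^{\Lambda_n}$-atom of $z$ by $e^{V_n(h^*+\gamma+O(\theta\ln r(\rho)))}$ sets of $d_{\Lambda_n}$-diameter at most $2\rho$.
\item Shannon--McMillan--Breiman for $\xi$, together with Katok's formula (Definition~\ref{definition 2.4}), then gives $h_\nu(\mathcal{T})\le h_\nu(\mathcal{T},\xi)+h^*(\mathcal{T},\varepsilon')$. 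This is enough for your limiting argument.
\item The non-ergodic case follows by ergodic decomposition.
\end{itemize}
Alternatively, an Ornstein--Weiss quasi-tiling argument handles the packing problem directly.
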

\subsection{Invariant measures}
In this subsection, we present some background on invariant measures. The compactness of $X$ implies that $\mathcal{M}(X)$ and $\mathcal{M}(X,\mathcal{T})$ are compact and metrizable spaces under the weak-$*$ topology \cite[Theorem 6.5 and 6.10]{W82}. By \cite[Theorem 6.4]{W82}, there exists a countable and separating set of continuous functions $\left\lbrace f_1, f_2, \ldots\right\rbrace $ with $0 \leq f_k \leq 1$ such that
$$D(\mu,\nu):= \| \mu - \nu \|:=\sum_{i=1}^\infty\frac{|\langle f_i, \mu-\nu\rangle|}{2^i}$$
defines a compatible metric for the weak-$*$ topology on $\mathcal{M}\left( X\right)$ \cite[Theorem 6.4]{W82} and satisfies $D(\mu,\nu) \le 1$ for any $\mu,\nu \in \mathcal{M}(X)$. Throughout the remainder of this paper, we endow $X$ with the metric $d$ defined by 
$$d(x,y):=D(\delta_x,\delta_y),$$
where $\delta_{x}$ denotes the Dirac measure concentrated at $x$.
\begin{proposition}\cite[Theorem 6.4]{W82}\label{proposition2.11}
The metric $D$ defined above satisfies
$$D(\sum_{k=1}^{n}a_k\mu_k,\sum_{k=1}^{n}a_k\nu_k)\leq\sum_{k=1}^{n}a_kD(\mu_k,\nu_k)$$	
for any $n\in \mathbb{N}$, any $\mu_1, \dots, \mu_n, \nu_1, \dots, \nu_n \in \mathcal{M}(X)$ and any $a_1, \dots, a_n > 0$ with $\sum_{k=1}^{n}a_k=1$.
\end{proposition}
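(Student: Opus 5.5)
The statement is Proposition \ref{proposition2.11}: the metric $D$ is jointly convex with respect to convex combinations. I will argue directly from the definition of $D$ as a weighted sum of absolute values of linear functionals. The point is that each map $\nu\mapsto\langle f_i,\nu\rangle$ is affine on $\mathcal{M}(X)$, and then the triangle inequality for real numbers applies termwise.

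Fix $n$, the measures $\mu_k,\nu_k\in\mathcal{M}(X)$, and weights $a_k>0$ with $\sum_k a_k=1$. Set $\mu=\sum_k a_k\mu_k$ and $\nu=\sum_k a_k\nu_k$; both are again Borel probability measures. For each $i$, linearity of the integral in the measure gives
\[
\langle f_i,\mu-\nu\rangle=\sum_{k=1}^n a_k\langle f_i,\mu_k-\nu_k\rangle .
\]
Since $a_k>0$, it follows that
\[
|\langle f_i,\mu-\nu\rangle|\le\sum_{k=1}^n a_k|\langle f_i,\mu_k-\nu_k\rangle| .
\]

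Next I multiply this inequality by $2^{-i}$ and sum over $i\ge1$. The series $\sum_i 2^{-i}|\langle f_i,\mu_k-\nu_k\rangle|$ converges absolutely, since each term is bounded by $2^{-i}$ because $0\le f_i\le1$. The inner sum over $k$ is finite, so the order of the two summations may be exchanged. This yields
\[
D(\mu,\nu)\le\sum_{k=1}^n a_k\sum_{i=1}^\infty 2^{-i}|\langle f_i,\mu_k-\nu_k\rangle|=\sum_{k=1}^n a_kD(\mu_k,\nu_k),
\]
which is the claim.

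There is no real obstacle here. The only point needing a word of justification is the interchange of the finite sum and the infinite series, and that is handled by the absolute convergence noted above. Positivity of the $a_k$ is used to pull them out of the absolute values; normalization $\sum_k a_k=1$ is needed only so that the combinations are again probability measures, so that $D$ is applied within $\mathcal{M}(X)$.
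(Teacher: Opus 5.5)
Your argument is correct. The paper gives no proof of its own and only cites \cite[Theorem 6.4]{W82}; your verification is the standard one behind that citation: linearity of $\nu\mapsto\langle f_i,\nu\rangle$, the triangle inequality termwise, then summation against $2^{-i}$. The interchange of the finite sum with the series is harmless because all terms are nonnegative.
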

Let $\operatorname{ext}(K)$ be the set of extreme points of a convex set $K$. According to \cite[Theorem 6.10]{W82}, we have $\mathcal{M}_{e}(X,\mathcal{T}) = \operatorname{ext}(\mathcal{M}(X,\mathcal{T}))$ and $\mathcal{M}(X,\mathcal{T})$ forms a Choquet simplex, meaning that every $\mu \in \mathcal{M}(X,\mathcal{T})$ can be uniquely expressed as the barycenter of a probability measure concentrated on $\operatorname{ext}(\mathcal{M}(X,\mathcal{T}))$. Furthermore, $\mathcal{M}_{e}(X,\mathcal{T})$ is a $G_{\delta}$ subset of $\mathcal{M}(X,\mathcal{T})$. If $\mathcal{M}_{e}(X,\mathcal{T})$ is dense in $\mathcal{M}(X,\mathcal{T})$, then it is automatically a residual subset, and in this case $\mathcal{M}(X,\mathcal{T})$ is a Poulsen simplex precisely when it contains more than one point. The foundational properties of the Poulsen simplex were investigated in detail in \cite{LOS78}. A selection of key facts is summarized below. For a thorough introduction to Choquet simplices, we direct the reader to \cite{P01}.
\begin{proposition}\cite{LOS78}\label{proposition 2.12}
\begin{enumerate}
\item A metrizable Choquet simplex $S$ is a Poulsen simplex if and only if $S$ is not a singleton and $\operatorname{ext}(S)$ is dense in $S$.
\item The Poulsen simplex is unique up to affine homeomorphism.
\item If $S$ is a Poulsen simplex, then $\operatorname{ext}(S)$ is homeomorphic to the Hilbert space $\ell^2$. Notably, $\operatorname{ext}(S)$ is arcwise connected by simple arcs.
\end{enumerate}
\end{proposition}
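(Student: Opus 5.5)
Since this proposition is quoted from \cite{LOS78} and used only as a black box, I will only sketch how I would establish its three parts, in increasing order of difficulty.

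For part (1), I would fix the definition of a Poulsen simplex as a nontrivial metrizable Choquet simplex whose extreme points are dense; under that convention (1) is essentially a restatement. The real content is to show that density of $\operatorname{ext}(S)$ is equivalent to the \emph{approximate homogeneity property} used in (2). This property says the following. Let $\sigma$ be a finite-dimensional simplex, let $\sigma'\supset\sigma$ be a face-extension, and let $\phi:\sigma\to S$ be an affine map sending vertices into $\operatorname{ext}(S)$. Then for every $\eta>0$, $\phi$ extends to an affine map $\phi':\sigma'\to S$ whose new vertices also go to $\operatorname{ext}(S)$, up to an error $\eta$. I would prove this by writing a target point of $S$ as a barycenter, which is possible by the Choquet simplex structure. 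Then I would use density of $\operatorname{ext}(S)$ to replace each point of a finite convex combination by a nearby extreme point, controlling errors with the convexity estimate of Proposition \ref{proposition2.11}.

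For part (2), I would represent any metrizable simplex as an inverse limit $\varprojlim(\sigma_n,\pi_n)$ of finite-dimensional simplices with affine surjections $\pi_n$ (the Lazar--Lindenstrauss representation). Given two Poulsen simplices $S$ and $S'$, I would run a back-and-forth argument. At stage $n$, I would have affine maps $\sigma_n\to S'$ and $\sigma'_n\to S$ that approximately commute with the bonding maps within $2^{-n}$. The approximate homogeneity property from (1) is exactly what allows each map to be extended to the next stage. The summable errors $2^{-n}$ then make the composite maps converge uniformly to mutually inverse affine homeomorphisms. \textbf{This is the main obstacle:} the error bookkeeping must be arranged so that the limit maps are both well defined and bijective. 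I would first try a Fra\"iss\'e-style scheme, in which the Poulsen simplex is the ``generic'' limit object and uniqueness is forced by the extension property.

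For part (3), uniqueness lets me compute $\operatorname{ext}(S)$ in one convenient model, for instance $\mathcal{M}(X,\mathcal{T})$ of the full shift. There $\operatorname{ext}(S)$ is a $G_\delta$ subset of a compact metric space, hence Polish. I would then apply Toru\'nczyk's characterization: a Polish absolute retract $Y$ is homeomorphic to $\ell^2$ if and only if every map $\bigoplus_{n}Q\to Y$ can be approximated by maps sending the summands to a discrete family, where $Q$ is the Hilbert cube. To check the absolute retract property and this discrete approximation property, I would deform maps $K\to S$, with $K$ compact, into maps $K\to\operatorname{ext}(S)$ with arbitrarily small displacement. This is again done via approximate homogeneity, applied on a fine triangulation of $K$ with vertices pushed into $\operatorname{ext}(S)$ and a limiting procedure. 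Arcwise connectedness by simple arcs then follows immediately, because $\ell^2$ has this property and it is preserved by homeomorphisms.
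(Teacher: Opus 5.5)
The paper does not prove this proposition; it quotes it from \cite{LOS78}. Treating (1) as the definition of a Poulsen simplex is consistent with that source, and the overall architecture you propose (Lazar--Lindenstrauss representation, approximate intertwining, Toru\'nczyk's characterization) is reasonable. As a proof of (2) and (3), however, your sketch has genuine gaps, and they sit exactly in the steps you treat as routine.

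For (2), the approximate homogeneity property as you state it has no content. If $\sigma$ is a face of the simplex $\sigma'$, an affine map on $\sigma$ extends \emph{exactly} to $\sigma'$ with the new vertices sent to arbitrary points of $\operatorname{ext}(S)$. So the property holds for every simplex, e.g.\ a triangle, and cannot drive a uniqueness proof. If you instead require the new vertex images to be close to $\phi\circ r$ for a retraction $r:\sigma'\to\sigma$ (as in the face form of the Lazar--Lindenstrauss representation), the property is just density of $\operatorname{ext}(S)$ restated. Note also that replacing each point of a nontrivial convex combination by nearby extreme points never yields an extreme point; you must approximate the single target point.

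More seriously, maps $\sigma_n\to S'$ and $\sigma'_n\to S$ obtained from such extensions can be built independently of each other. They therefore give some affine maps $F:S\to S'$ and $G:S'\to S$, but nothing forces $G\circ F=\mathrm{id}$ or $F\circ G=\mathrm{id}$. The induction must carry a two-sided condition, and maintaining it needs a lifting statement such as the following: for affine surjections $p:S\to\sigma$ and $\pi:\tau\to\sigma$ onto finite-dimensional simplices and $\varepsilon>0$, there is an affine \emph{surjection} $q:S\to\tau$ with $\sup_S\|\pi\circ q-p\|<\varepsilon$.

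Deriving this from density is the real work. One picks distinct extreme points $x_j$ with $p(x_j)\approx\pi(w_j)$ for the vertices $w_j$ of $\tau$, and prescribes $q(x_j)=w_j$ on the closed face $\operatorname{conv}\{x_j\}$. One then extends $q$ with control to all of $S$, which needs a simplex-specific extension result such as Lazar's affine selection theorem. Local perturbation plus convexity estimates cannot replace this. An affine $a$ with $0\le a\le 1$ and $a(x)=1$ satisfies $a(\tfrac12 x+\tfrac12 y)\ge\tfrac12$ for every $y$, so modifications near an extreme point are never uniformly small. This step is the theorem, not bookkeeping.

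For (3), the hypotheses of Toru\'nczyk's characterization (absolute retract, discrete approximation) are precisely the content to be proved, and your verification fails as described. After you triangulate $K$ and move the vertices into $\operatorname{ext}(S)$, the piecewise-affine map sends every open cell of positive dimension into the relative interior of a face $\operatorname{conv}\{x_0,\dots,x_k\}$ spanned by at least two distinct extreme points. That set contains no extreme points; density only handles $0$-cells.

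A limiting procedure can converge into the $G_\delta$ set $\operatorname{ext}(S)$ only if, at each stage, you can already approximate maps of cells by maps into arbitrarily small open neighbourhoods of $\operatorname{ext}(S)$. That amounts to knowing $S\setminus\operatorname{ext}(S)$ is homotopy negligible, which is what has to be shown; your argument does not even join two extreme points by a path in $\operatorname{ext}(S)$. The discrete approximation property likewise needs an explicit construction forcing images to accumulate only at non-extreme points. Since you invoke uniqueness anyway, one way to close the gap is to prove these approximation properties in a concrete model, for instance by using specification in the full shift to build continuous families of ergodic measures. Otherwise you need the actual argument of \cite{LOS78}.
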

\subsection{The approximate product property}
In this subsection, we introduce the approximate $L$-product property.
\begin{definition}\cite[Definition 2.9 ]{PS05}\label{definition 2.13}
The dynamical system $(X, d, \mathcal{T})$ has the approximate $L$-product property if the following condition holds. Given any $\varepsilon > 0$ and $\delta > 0$, there exists $N(\varepsilon , \delta)\in \mathbb{N}$ such that for any $n\geq N(\varepsilon , \delta)$ and any family $\{x^{\mathbf{i}}\}_{\mathbf{i}\in L} \subset X$, there exist a family  $\{\Lambda^{\mathbf{i}}\}_{\mathbf{i}\in L}$, a family $\{\xi_{\mathbf{i} }\}_{\mathbf{i} \in L}$ and $x\in X$ satisfying the following conditions:
\begin{enumerate}
    \item For every $\mathbf{i}\in L$, $\Lambda^{\mathbf{i}} \subset \Lambda_n$, $\Lambda^{\mathbf{i}} + \xi_{\mathbf{i} } \subset \Lambda_n$ and $|\Lambda_n \setminus \Lambda^{\mathbf{i}}| \leq \delta V_n$;
    \item For every $\mathbf{i}\in L$, $d_{\Lambda^{\mathbf{i}}}(T^{\mathbf{i}D_n +\xi_{\mathbf{i}}}x, x^{\mathbf{i}}) \leq \varepsilon $.
\end{enumerate}
\end{definition}
\begin{definition}\cite[Definition 2.7]{PS05}\label{definition 2.14}
A measure $\mu \in \mathcal{M}(X, \mathcal{T})$ is said to be \emph{entropy-approachable} by ergodic measures if for any $\eta>0$ and any $h < h_{\mu}(\mathcal{T})$, there is $\nu\in\mathcal{M}_{e}(X, \mathcal{T})$ such that
$$D(\mu,\nu)<\eta~~\text{and}~~h_{\nu}(\mathcal{T}) > h.$$ 
The dynamical system $(X, d, \mathcal{T})$ is called \emph{entropy-dense} if every $\mu \in \mathcal{M}(X, \mathcal{T})$ is entropy-approachable by ergodic measures.
\end{definition}
In \cite{PS05}, Pfister and Sullivan proved that the approximate $L$-product property implies entropy-dense.
\begin{proposition}\cite[Theorem 2.1 ]{PS05}\label{proposition2.15}
 If $(X, d, \mathcal{T})$ has the approximate $L$-product property, then $(X, d,  \mathcal{T})$ is entropy-dense.
\end{proposition}
\section{Empirical measures}
This section collects some facts about empirical measures in the context of $L$, which will be used in the proof of the main results. The proofs of Lemma \ref{lemma3.2} and Lemma \ref{lemma3.3} follow the ideas of \cite[Section 5.3]{CLT20}, adapted to the framework introduced in Definitions \ref{definition 2.1} and \ref{definition 3.1}.
\begin{definition}\label{definition 3.1}
 For each $x \in X$ and a finite subset $\Lambda \subset L$, the empirical measure $\mathcal{E}_{\Lambda}(x)$ is defined as
$$\mathcal{E}_{\Lambda}(x) := \frac{1}{|\Lambda|} \sum_{\mathbf{i} \in \Lambda} \delta_{T^{\mathbf{i}} x},$$
where $\delta_{x}$ denotes the Dirac measure concentrated at $x$. In particular, for $\Lambda = \Lambda_n$ we define 
$$\mathcal{E}_{n}(x):=\mathcal{E}_{\Lambda_{n}}(x)= \frac{1}{V_{n}} \sum_{\mathbf{i} \in \Lambda_{n}} \delta_{T^{\mathbf{i}}x}.$$
\end{definition}
Given a set $U \subset \mathcal{M}(X, \mathcal{T})$, we write
$$X_{n, U} := \{ x \in X : \mathcal{E}_{n}(x) \in U \}.$$
A point $x \in X$ is called a \emph{generic point} for $\nu \in \mathcal{M}(X, \mathcal{T})$ if for every $f \in C(X)$,
$$\lim_{n} \langle f, \mathcal{E}_{n}(x) \rangle = \langle f, \nu \rangle.$$
Let $\mu \in \mathcal{M}(X,\mathcal{T})$ be a fixed invariant probability measure and $\eta > 0$. Denote
$$\mathcal{B}_{\eta} = \mathcal{B}_{\eta}(\mu) := \overline{B(\mu, \eta)} = \{ \nu \in \mathcal{M}(X, \mathcal{T}) : D(\mu, \nu) \le \eta \}.$$
For $N \in \mathbb{N}$, we define
$$Z_{N,\eta} = Z_{N,\eta}(\mu) := \{ x \in X : T^{\mathbf{k}}(x) \in X_{N, \mathcal{B}_{\eta}} \text{ for } \mathbf{k} \in L\}.$$
That is, $Z_{N,\eta} = \bigcap_{\mathbf{k} \in L} T^{-\mathbf{k}}(X_{N, \mathcal{B}_{\eta}})$.
Clearly $T^{\mathbf{k}}(Z_{N,\eta}) \subset Z_{N,\eta}$ for every $\mathbf{k} \in L$. According to \cite[Section 6.1]{W82}, the map $x \mapsto \mathcal{E}_{N}(x)$ is continuous and hence uniformly continuous due to the compactness of $X$. Consequently, each $X_{N, \mathcal{B}_{\eta}}$ is closed and  $Z_{N,\eta}$ is also compact.\\
For $\varepsilon > 0$, define
$$\operatorname{Var}(\varepsilon) := \max \{ D(\mathcal{E}_{0}(x),\, \mathcal{E}_{0}(y)) : d(x, y) \le \varepsilon,\; x, y \in X \}.$$
The uniform continuity of the map $x \mapsto \mathcal{E}_{0}(x) = \delta_{x}$ implies
\begin{align}\label{3.1}
\lim_{\varepsilon \to 0} \operatorname{Var}(\varepsilon) = 0.
\end{align}
Denote by $D^{*}$ the diameter of $\mathcal{M}(X)$, i.e.
$$D^{*} := \max \{ D(\mu, \nu) : \mu, \nu \in \mathcal{M}(X) \}.$$
\begin{lemma}\label{lemma3.2}
Let $N \in \mathbb{N}$ and $\nu \in \mathcal{M}(Z_{N,\eta}, \mathcal{T})$. Then  
$D(\nu, \mu) \leq \eta.$
\end{lemma}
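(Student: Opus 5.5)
The idea is to average $\mathcal{E}_N$ against $\nu$: invariance of $\nu$ reproduces $\nu$, while the support condition on $Z_{N,\eta}$ puts every value of $\mathcal{E}_N$ inside the closed convex set $\mathcal{B}_\eta$.

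\textbf{Averaging identity.} Since $\nu$ is $\mathcal{T}$-invariant, for every $f\in C(X)$ we have
\begin{align*}
\int \langle f,\mathcal{E}_N(x)\rangle\,d\nu(x)=\frac{1}{V_N}\sum_{\mathbf{i}\in\Lambda_N}\int f(T^{\mathbf{i}}x)\,d\nu(x)=\langle f,\nu\rangle .
\end{align*}
Thus $\nu$ is the barycenter of the push-forward measure $(\mathcal{E}_N)_*\nu$ on $\mathcal{M}(X)$. This push-forward is well defined because $x\mapsto\mathcal{E}_N(x)$ is continuous.

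\textbf{Support condition.} Since $\nu(Z_{N,\eta})=1$, and $Z_{N,\eta}\subset X_{N,\mathcal{B}_\eta}$ (take $\mathbf{k}=0$), for $\nu$-a.e.\ $x$ we have $\mathcal{E}_N(x)\in\mathcal{B}_\eta$. Hence $(\mathcal{E}_N)_*\nu$ is concentrated on $\mathcal{B}_\eta$.

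\textbf{Conclusion.} The set $\{\rho\in\mathcal{M}(X): D(\rho,\mu)\le\eta\}$ is closed and convex. Convexity follows from Proposition \ref{proposition2.11}, or directly from the formula for $D$. The barycenter of a probability measure supported on a closed convex subset lies in that subset, which gives $D(\nu,\mu)\le\eta$.

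To avoid invoking barycenter theory, I would argue concretely. For each $k$,
\begin{align*}
|\langle f_k,\nu-\mu\rangle|=\Big|\int\langle f_k,\mathcal{E}_N(x)-\mu\rangle\,d\nu(x)\Big|\le\int|\langle f_k,\mathcal{E}_N(x)-\mu\rangle|\,d\nu(x).
\end{align*}
Multiplying by $2^{-k}$ and summing, with monotone convergence justifying the interchange of sum and integral, gives
\begin{align*}
D(\nu,\mu)\le\int D(\mathcal{E}_N(x),\mu)\,d\nu(x)\le\eta,
\end{align*}
because the integrand is at most $\eta$ on $Z_{N,\eta}$, which has full $\nu$-measure.

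\textbf{Main obstacle.} The only slightly delicate point is this last step: the linearity of $\rho\mapsto\langle f_k,\rho\rangle$ together with the triangle inequality under the integral. It is routine. Invariance of $Z_{N,\eta}$ is not even needed beyond the fact that $\nu$ gives it full measure, so there is no genuine difficulty.
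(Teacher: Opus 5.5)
Your proof is correct, and it takes a genuinely different and more economical route than the paper.

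The paper first handles ergodic $\nu$ pathwise. It picks a generic point $x\in Z_{N,\eta}$ for $\nu$, which relies on the pointwise ergodic theorem for cube averages. It then tiles $\Lambda_n$ by translates $\mathbf{j}D_N+\Lambda_N$ plus a boundary remainder of relative size at most $dD_N/D_n$. The $\mathcal{T}$-invariance of $Z_{N,\eta}$ guarantees that each block measure $\mathcal{E}_N(T^{\mathbf{j}D_N}x)$ lies within $\eta$ of $\mu$, and letting $n\to\infty$ finishes the ergodic case. The non-ergodic case is deduced from the ergodic decomposition and convexity of $D$.

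You instead integrate $x\mapsto\mathcal{E}_N(x)$ against $\nu$. Invariance of the measure, rather than invariance of the set along a single orbit, then exhibits $\nu$ as an average of measures in the closed $\eta$-ball. Your route needs no ergodic theorem, no tiling and no ergodic decomposition, and it uses only $\nu(X_{N,\mathcal{B}_\eta})=1$.

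The paper's last step also tacitly requires an integral form of the convexity in Proposition~\ref{proposition2.11}, which is stated only for finite convex combinations. The inequality $D(\int\rho\,d\tau(\rho),\mu)\le\int D(\rho,\mu)\,d\tau(\rho)$ is exactly what your termwise estimate with monotone convergence proves.

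In return, the paper's block argument yields a pointwise fact that yours does not: for every $x\in Z_{N,\eta}$ one has $\limsup_{n}D(\mathcal{E}_n(x),\mu)\le\eta$. That is, every weak-$*$ limit point of the empirical measures along any orbit in $Z_{N,\eta}$ lies in the closed $\eta$-ball. The lemma as stated does not need this.
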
	
\begin{proof}
Assume that $\nu\in \mathcal{M}(Z_{N,\eta}, \mathcal{T})$ is ergodic. Then there exists a generic point $x \in Z_{N,\eta}$ for $\nu$ such that $\mathcal{E}_{n}(x)$ converge to $\nu$ as $n \to \infty$. We present the proof for $L = \mathbb{Z}_+^d$, the case $L = \mathbb{Z}^d$ is analogous with the natural symmetric modification of the index set $J$.
For each $n \in \mathbb{N}$, define 
$$ q_n := \left\lfloor\frac{D_n}{D_N} \right\rfloor-1.$$
Note that $q_n$ is the largest integer such that $D_n-D_N<(q_n+1)D_N \le D_n$.
Let
$$J:= \{\mathbf{j} \in L: 0 \leq j_i \leq q_n,\ i=1,\dots,d \}.$$
For each $\mathbf{j} \in J$, the translated cube
$$\Lambda_N^{(\mathbf{j})} := \mathbf{j} D_N + \Lambda_N = \{ \mathbf{j} D_N + \mathbf{i} : \mathbf{i} \in \Lambda_N \}$$
is entirely contained in $\Lambda_n$, since $j_i D_N + N \le q_n D_N + N \le n$. These cubes are pairwise disjoint. Their union is
$$ U_n := \bigcup_{\mathbf{j} \in J} \bigl( \mathbf{j} D_{N} + \Lambda_N \bigr)=\Lambda_{q_nD_{N}+N},$$
and we have
$|U_n| = V_{q_n D_N + N}.$
The remainder set $R_n := \Lambda_n \setminus U_n$ satisfies
$$|R_n| = |\Lambda_n| - |U_n| \le D_n^{d} - (D_n- D_N)^d\le d D_n^{d-1} D_N.$$
Thus,
\begin{align}\label{3.2}
 \frac{|R_n|}{|\Lambda_n|} \le \frac{dD_N}{D_n} \xrightarrow{n\to\infty} 0.
\end{align}
Define the empirical measure $\mathcal{E}_n(x)$ as a convex combination:
$$\mathcal{E}_{n}(x) = \frac{|U_n|}{|\Lambda_n|} \, \nu_n^{(U)} + \frac{|R_n|}{|\Lambda_n|} \, \nu_n^{(R)},$$
where   
$$\nu_n^{(U)} :=\frac{1}{|J|} \sum_{\mathbf{j}\in J} \mathcal{E}_{\Lambda_{N}}(T^{\mathbf{j} D_N} x)~~\text{and}~~\nu_n^{(R)} := \frac{1}{|R_n|} \sum_{\mathbf{i} \in R_n} \delta_{T^{\mathbf{i}} x}.$$
Since $x \in Z_{N,\eta}$, we have $D(\mathcal{E}_N(T^{\mathbf{j}} x), \mu) \leq \eta$  for every $\mathbf{j} \in L$. In particular, for each $\mathbf{j}$ with $0 \le j_i \le q_n$, $D(\mathcal{E}_N(T^{\mathbf{j} D_N} x), \mu) \le \eta$. By convexity of the metric $D$, 
 \begin{align}\label{3.3}
 D\bigl( \nu_n^{(U)}, \mu \bigr) \leq \eta.    
 \end{align}
Applying the convexity of $D$ again,  
$$D\bigl( \mathcal{E}_{n}(x), \mu \bigr) \leq \frac{|U_n|}{|\Lambda_n|} \eta + \frac{|R_n|}{|\Lambda_n|} D^*.$$
By (\ref{3.3}) and the continuity of $D$, we have
$$\lim_{n\to\infty} D\bigl( \mathcal{E}_{n}(x), \mu \bigr)=D(\nu, \mu) \leq \eta.$$ 
If $\nu$ is not ergodic, the same conclusion follows from the ergodic decomposition together with the convexity of $D$ (Proposition \ref{proposition2.11}).
\end{proof}
\begin{lemma}\label{lemma3.3}
Let $\eta, \delta, \varepsilon > 0$ and $K, M \in \mathbb{N}$ such that
\begin{align}\label{3.4}
 \frac{2d D^{*}}{K} < \eta~~\text{and}~~\operatorname{Var}(\varepsilon) + 2\delta D^{*} < \eta. 
\end{align}
Suppose that a sequence of points $\mathcal{C} = \{x_{\mathbf{v}}\}_{\mathbf{v}\in L} \subset X_{ M, B(\mu, \eta)}$ is traced by $z\in X$ in the sense of definition \ref{definition 2.13}. Then $z \in Z_{KM, 3\eta}$.
\end{lemma}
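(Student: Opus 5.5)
The plan is to verify the defining condition of $Z_{KM,3\eta}$ directly. We fix an arbitrary $\mathbf{k}\in L$ and show that $D(\mathcal{E}_{KM}(T^{\mathbf{k}}z),\mu)\le 3\eta$. We interpret the tracing hypothesis of Definition \ref{definition 2.13} with $n=M$. Thus for each $\mathbf{v}\in L$ there are $\Lambda^{\mathbf{v}}\subset\Lambda_M$ and $\xi_{\mathbf{v}}$ such that
\begin{itemize}
\item[] $\Lambda^{\mathbf{v}}+\xi_{\mathbf{v}}\subset\Lambda_M$ and $|\Lambda_M\setminus\Lambda^{\mathbf{v}}|\le\delta V_M$;
\item[] $d_{\Lambda^{\mathbf{v}}}(T^{\mathbf{v}D_M+\xi_{\mathbf{v}}}z,x_{\mathbf{v}})\le\varepsilon$.
\end{itemize}
The key observation is that the traced piece $\mathbf{v}D_M+\xi_{\mathbf{v}}+\Lambda^{\mathbf{v}}$ lies inside the grid cube $Q_{\mathbf{v}}:=\mathbf{v}D_M+\Lambda_M$. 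The cubes $Q_{\mathbf{v}}$, $\mathbf{v}\in L$, tile $L$ (for $\mathbb{Z}^d$ after the obvious centering). Hence we do not need to control the shifts $\xi_{\mathbf{v}}$ beyond this containment.

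\emph{Step 1 (decomposition).} Write $W:=\mathbf{k}+\Lambda_{KM}$, so that $\mathcal{E}_{KM}(T^{\mathbf{k}}z)$ is the uniform average of $\delta_{T^{\mathbf{i}}z}$ over $\mathbf{i}\in W$. Let $G$ be the set of $\mathbf{v}$ with $Q_{\mathbf{v}}\subset W$, and let $R:=W\setminus\bigcup_{\mathbf{v}\in G}Q_{\mathbf{v}}$. This gives the convex decomposition
\[
\mathcal{E}_{KM}(T^{\mathbf{k}}z)=\sum_{\mathbf{v}\in G}\frac{V_M}{V_{KM}}\,\mathcal{E}_{Q_{\mathbf{v}}}(z)+\frac{|R|}{V_{KM}}\,\mathcal{E}_{R}(z).
\]
In each coordinate direction, $R$ lies within two slabs of thickness less than $D_M$ at the faces of $W$. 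Therefore $|R|/V_{KM}\le 2dD_M/D_{KM}$, and a direct comparison of $D_M$ with $D_{KM}$ from Definition \ref{definition 2.1} gives $|R|/V_{KM}\lesssim 2d/K$. By Proposition \ref{proposition2.11}, the remainder then contributes at most $\frac{|R|}{V_{KM}}D^*<\eta$, using the first inequality in (\ref{3.4}). I expect this counting step to be the main (if elementary) obstacle: the widths $D_{KM}$ versus $KD_M$ differ slightly between the $\mathbb{Z}^d$ and $\mathbb{Z}_+^d$ cases, and the constant must be matched to (\ref{3.4}). If needed I would absorb the discrepancy by a slightly more careful slab count, exploiting that the two partial slabs in each direction together have thickness at most $D_M$ plus a bounded error.

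\emph{Step 2 (each full block).} Fix $\mathbf{v}\in G$. We estimate $D(\mathcal{E}_{Q_{\mathbf{v}}}(z),\mu)$ by a chain of four comparisons.
\begin{itemize}
\item[] First, $\mathcal{E}_{Q_{\mathbf{v}}}(z)$ is a convex combination of $\mathcal{E}_{\Lambda^{\mathbf{v}}}(T^{\mathbf{v}D_M+\xi_{\mathbf{v}}}z)$ with weight $|\Lambda^{\mathbf{v}}|/V_M\ge 1-\delta$ and another measure with weight at most $\delta$. Proposition \ref{proposition2.11} gives a distance of at most $\delta D^*$.
\item[] Second, the tracing inequality and the definition of $\operatorname{Var}$ give $D\bigl(\mathcal{E}_{\Lambda^{\mathbf{v}}}(T^{\mathbf{v}D_M+\xi_{\mathbf{v}}}z),\mathcal{E}_{\Lambda^{\mathbf{v}}}(x_{\mathbf{v}})\bigr)\le\operatorname{Var}(\varepsilon)$, again by convexity.
\item[] Third, by the same splitting argument as in the first item, $D(\mathcal{E}_{\Lambda^{\mathbf{v}}}(x_{\mathbf{v}}),\mathcal{E}_M(x_{\mathbf{v}}))\le\delta D^*$.
\item[] Fourth, $x_{\mathbf{v}}\in X_{M,B(\mu,\eta)}$ gives $D(\mathcal{E}_M(x_{\mathbf{v}}),\mu)<\eta$.
\end{itemize}
Summing the four bounds and applying the second inequality in (\ref{3.4}) yields $D(\mathcal{E}_{Q_{\mathbf{v}}}(z),\mu)<\operatorname{Var}(\varepsilon)+2\delta D^*+\eta<2\eta$.

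\emph{Step 3 (conclusion).} Insert the bounds from Steps 1 and 2 into the decomposition, using Proposition \ref{proposition2.11}. This gives
\[
D(\mathcal{E}_{KM}(T^{\mathbf{k}}z),\mu)\le 2\eta+\frac{|R|}{V_{KM}}D^*<3\eta .
\]
Since $\mathbf{k}\in L$ was arbitrary, $T^{\mathbf{k}}z\in X_{KM,\mathcal{B}_{3\eta}}$ for all $\mathbf{k}$, that is, $z\in Z_{KM,3\eta}$.
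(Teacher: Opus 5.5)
Your argument is correct. Your Step 2 is the same four-term chain as the paper's \eqref{3.7}--\eqref{3.11}. The genuine difference is in how you handle an arbitrary $\mathbf{k}$.

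\emph{The paper's route.} The paper decomposes only the grid-aligned window $\Lambda_{KM}$. There is one partial slab per direction, so the remainder fraction is at most $dD_M/D_{KM}$. It then reduces general $\mathbf{k}$ to $\mathbf{k}=0$ by applying the same argument to $T^{\mathbf{k}}z$ and the shifted sequence. Strictly speaking, that reduction is valid only for $\mathbf{k}\in D_M L$: for other $\mathbf{k}$, the point $T^{\mathbf{k}}z$ no longer traces a sequence along the grid $D_M L$.

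\emph{Your route.} You work directly with the misaligned window $\mathbf{k}+\Lambda_{KM}$ and keep only the grid cubes $Q_{\mathbf{v}}$ lying inside it. This works because each traced piece $\mathbf{v}D_M+\xi_{\mathbf{v}}+\Lambda^{\mathbf{v}}$ stays inside its cube $Q_{\mathbf{v}}$. The price is two partial slabs per direction, which is exactly what the factor $2$ in \eqref{3.4} pays for. So your version proves the lemma for every $\mathbf{k}$ with no reduction step.

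\emph{The counting step you flagged.} You do need a sharper count than the one you wrote. The crude bound $2dD_M/D_{KM}$ exceeds $2d/K$ whenever $K>1$ in both cases, since $K D_M > D_{KM}$. The needed count is immediate, though. In a given direction, each of the two end pieces is a proper part of a single grid interval of length $D_M$, so it has at most $D_M-1$ points. Hence
\[
\frac{|R|}{V_{KM}}\le \frac{2d(D_M-1)}{D_{KM}},
\]
which equals $2dM/(KM+1)$ for $L=\mathbb{Z}_+^d$ and $4dM/(2KM+1)$ for $L=\mathbb{Z}^d$. Both are strictly below $2d/K$, so the first inequality in \eqref{3.4} gives $\frac{|R|}{V_{KM}}D^*<\eta$ with nothing to absorb.

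Your proposed fallback is false in general, though you don't need it. You claimed the two partial slabs together have thickness at most $D_M$ plus a bounded error. For $L=\mathbb{Z}_+$, $K=3$ and the window $[1,3M+1]$, the only full grid interval inside is $[M+1,2M+1]$, so the uncovered part has $2M$ points. The excess over $D_M$ is $M-1$, which is not bounded.
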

\begin{remark}
By the definition of $Z_{KM, 3\eta}$, we have $z \in Z_{KM, 3\eta}$ if and only if $D\bigl(\mathcal{E}_{\Lambda_{KM}}(T^{\mathbf{k}} z), \mu\bigr) < 3\eta$ for every $\mathbf{k} \in L$. Since the system is translation invariant, it suffices to prove $D\bigl(\mathcal{E}_{\Lambda_{KM}}(z), \mu\bigr) < 3\eta$. The case of arbitrary $\mathbf{k}$ follows by applying the same argument to $T^{\mathbf{k}} z$ and the translated sequence $\{x_{\mathbf{v}+\mathbf{k}}\}_{\mathbf{v}\in L}$. Hence in the proof below we take $\mathbf{k}=0$.
\end{remark}
\begin{proof}
We prove $D\bigl(\mathcal{E}_{\Lambda_{KM}}(z), \mu\bigr) < 3\eta$ in four steps. We present the proof for $L = \mathbb{Z}_+^d$, the case $L = \mathbb{Z}^d$ is analogous with the natural symmetric modification of the index set $I$.\\
\textbf{Step 1: Decomposition of the domain $\Lambda_{KM}$.}\\
Define
$$q := \Bigl\lfloor \frac{D_{KM}}{D_{M}} \Bigr\rfloor - 1,$$
so that $q$ is the largest integer satisfying $D_{KM}-D_M<(q+1)D_M \le D_{KM}$. Let
$$I:= \{\mathbf{v} \in L: 0 \leq v_i \leq q,\ i=1,\dots,d \}.$$
For each $\mathbf{v}\in I$, the translated cube
$$\Lambda_M^{(\mathbf{v})} := \mathbf{v} D_M + \Lambda_M = \{\mathbf{v} D_M + \mathbf{i} : \mathbf{i} \in \Lambda_M \}$$
is entirely contained in $\Lambda_{KM}$, since $v_i D_M + M \le q D_M + M\le KM$. These cubes are pairwise disjoint. Their union is
$$ U:= \bigcup_{\mathbf{v}\in I} \bigl(\mathbf{v} D_{M} + \Lambda_M \bigr)=\Lambda_{qD_{M}+M},$$
and we have $|U|=V_{qD_{M}+M}$.\\
\textbf{Step 2: Estimate of the remainder.}\\
The uncovered part $\Lambda_{KM}\setminus U$ satisfies
$$|\Lambda_{KM} \setminus U|= |\Lambda_{KM}| - |U|\le D_{KM}^d - (D_{KM}-D_M)^d\le d D_{M}(D_{KM})^{d-1}.$$
Consequently,
$$\frac{| \Lambda_{KM}\setminus U|}{|\Lambda_{KM}|} \leq \frac{d D_M}{D_{KM}} < \frac{dD_{M}}{D_{KM}-1}.$$
Since $M \geq 1$, we have $\frac{D_{M}}{D_{KM}-1}<\frac{2}{K}$ and therefore
\begin{align}\label{3.5}
\frac{|\Lambda_{KM} \setminus U|}{|\Lambda_{KM}|} < \frac{2d}{K}.    
\end{align}
\textbf{Step 3: Splitting the empirical measure.}\\
Decompose $\mathcal{E}_{\Lambda_{KM}}(z)$ as
$$\mathcal{E}_{\Lambda_{KM}}(z) = \frac{|U|}{|\Lambda_{KM}|} \mathcal{E}_{U}(z) + \frac{|\Lambda_{KM}\setminus U|}{|\Lambda_{KM}|} \mathcal{E}_{\Lambda_{KM} \setminus U}(z).$$
Using the convexity of $D$ together with \eqref{3.4} and \eqref{3.5}, we have
\begin{align}\label{3.6}
 D\bigl(\mathcal{E}_{\Lambda_{KM}}(z), \mu\bigr) \leq \frac{|U|}{|\Lambda_{KM}|} D\bigl(\mathcal{E}_{U}(z), \mu\bigr) + \eta.    
\end{align}
\textbf{Step 4: Estimating the empirical measure on $U$.}\\
Since $U$ is the disjoint union of the cubes $\Lambda_M^{(\mathbf{v})}$, we have
$$\mathcal{E}_{U}(z) = \frac{1}{|I|} \sum_{\mathbf{v} \in I} \mathcal{E}_{\Lambda_M}(T^{\mathbf{v} D_{M}} z)=: \frac{1}{|I|} \sum_{\mathbf{v}\in I} \nu_{\mathbf{v}} .$$
Fix $\mathbf{v} \in I$. By definition \ref{definition 2.13}, there exist a subset $\Lambda_{\mathbf{v}} \subset \Lambda_M$ and an index $\xi_{\mathbf{v}} \in L$ such that
\begin{enumerate}
\item $\Lambda_{\mathbf{v}} + \xi_{\mathbf{v}} \subset \Lambda_M$, $|\Lambda_M \setminus \Lambda_{\mathbf{v}}| \leq \delta V_M$;
\item $d_{\Lambda_{\mathbf{v}}}(T^{\mathbf{v} D_{M}+ \xi_{\mathbf{v}}} z, x_{\mathbf{v}}) \leq \varepsilon$.
\end{enumerate}
Define the following auxiliary measures:
$$x_{\mathbf{v}}^{\mathrm{target}} := \frac{1}{|\Lambda_{\mathbf{v}}|} \sum_{\mathbf{i} \in \Lambda_{\mathbf{v}}} \delta_{T^{\mathbf{i}} x_{\mathbf{v}}}~~\text{and}~~\nu_{\mathbf{v}}^{\mathrm{good}} := \frac{1}{|\Lambda_{\mathbf{v}}|} \sum_{\mathbf{i} \in \Lambda_{\mathbf{v}}} \delta_{T^{\mathbf{v} D_{M} + \xi_{\mathbf{v}} + \mathbf{i}} z}.$$
Define $\mathcal{E}_M(x_{\mathbf{v}})$ and $\nu_{\mathbf{v}}$ as convex combinations:
$$\mathcal{E}_M(x_{\mathbf{v}}) = \alpha x_{\mathbf{v}}^{\mathrm{target}} + (1-\alpha)\tilde{x}_{\mathbf{v}}~~\text{and}~~\nu_{\mathbf{v}} = \alpha\nu_{\mathbf{v}}^{\mathrm{good}} + (1-\alpha)\nu_{\mathbf{v}}^{\mathrm{bad}}.$$
where $\alpha := \frac{|\Lambda_{\mathbf{v}}|}{V_M}\geq 1-\delta$, $\tilde{x}_{\mathbf{v}}$ is the empirical measure of $x_{\mathbf{v}}$ on $\Lambda_M \setminus \Lambda_{\mathbf{v}}$ and $\nu_{\mathbf{v}}^{\mathrm{bad}}$ is the empirical measure of $\nu_{\mathbf{v}}$ on $\Lambda_M \setminus (\Lambda_{\mathbf{v}}+\xi_{\mathbf{v}})$. From condition (2) above we have
\begin{align}\label{3.7}
 D\bigl(\nu_{\mathbf{v}}^{\mathrm{good}}, x_{\mathbf{v}}^{\mathrm{target}}\bigr) \leq  \text{Var}(\varepsilon).   
\end{align}
Since $x_{\mathbf{v}} \in X_{ M, B(\mu, \eta)}$, we have $D(\mathcal{E}_M(x_{\mathbf{v}}),\mu) \leq \eta$. By convexity of $D$,
\begin{align}\label{3.8}
 D\bigl(x_{\mathbf{v}}^{\mathrm{target}}, \mu\bigr) \leq D\bigl(x_{\mathbf{v}}^{\mathrm{target}}, \mathcal{E}_M(x_{\mathbf{v}})\bigr)+D\bigl(\mathcal{E}_M(x_{\mathbf{v}}), \mu\bigr)\leq\delta D^{*} + \eta.   
\end{align}
Combining \eqref{3.7} and \eqref{3.8} yields
\begin{align}\label{3.9}
 D\bigl(\nu_{\mathbf{v}}^{\mathrm{good}}, \mu\bigr) \leq \text{Var}(\varepsilon) + \delta D^{*} + \eta.   
\end{align}
 Since $\Lambda_{\mathbf{v}} + \xi_{\mathbf{v}} \subset \Lambda_M$ and $|\Lambda_M \setminus (\Lambda_{\mathbf{v}} + \xi_{\mathbf{v}})| \leq \delta V_M$, we have
\begin{align}\label{3.10}
 D\bigl(\nu_{\mathbf{v}}, \nu_{\mathbf{v}}^{\mathrm{good}}\bigr) \leq \delta D^{*}.    
\end{align}
From \eqref{3.9} and \eqref{3.10},
\begin{align}\label{3.11}
 D(\nu_{\mathbf{v}}, \mu) \leq \text{Var}(\varepsilon) + 2\delta D^{*} + \eta.   
\end{align}
By \eqref{3.4}, we have $D(\nu_{\mathbf{v}}, \mu) < 2\eta$.
Since $\mathcal{E}_{U}(z)$ is a convex combination of the measures $\{\nu_{\mathbf{v}}\}_{{\mathbf{v}} \in I}$,  the convexity of $D$ implies
\begin{align}\label{3.12}
D\bigl(\mathcal{E}_{U}(z), \mu\bigr) < 2\eta.    
\end{align}
Applying \eqref{3.12} to \eqref{3.6},
$$D\bigl(\mathcal{E}_{\Lambda_{KM}}(z), \mu\bigr) \leq\frac{|U|}{|\Lambda_{KM}|} 2\eta + \eta < 2\eta + \eta = 3\eta.$$
Thus $D\bigl(\mathcal{E}_{\Lambda_{KM}}(z), \mu\bigr) < 3\eta$. For an arbitrary ${\mathbf{k}} \in L$, applying the same argument to $T^{{\mathbf{k}}} z$ and the shifted sequence $\{x_{{\mathbf{v}}+{\mathbf{k}}}\}_{{\mathbf{v}} \in L}$, we also have $D(\mathcal{E}_{\Lambda_{KM}}(T^{\mathbf{k}} z), \mu) < 3\eta$. Therefore $z \in Z_{KM, 3\eta}$.
\end{proof}
For the entropy estimate, we will also need the following results from Pfister and Sullivan \cite{PS05}.
\begin{definition}\label{definition 3.5}
Let $F \subset X$. For $n \in \mathbb{N}$, $\delta > 0$ and $\varepsilon > 0$, a subset $E \subset F$ is called $(n,\delta,\varepsilon)$-separated if for any distinct points $x, y \in E$, we have
$$\bigl|\{\mathbf{j} \in \Lambda_n : d(T^{\mathbf{j}} x, T^{\mathbf{j}} y) > \varepsilon \}\bigr| > \delta \, V_n,$$
or equivalently, if for every $\Lambda \subset \Lambda_n$ with $|\Lambda_n \setminus \Lambda| \leq \delta V_n$, we have $d_\Lambda(x, y) > \varepsilon.$
\end{definition}
\begin{proposition}\cite[Proposition 2.1]{PS05}\label{pro3.6}
Let $(X, d, \mathcal{T})$ be a topological dynamical system.  
Suppose that $\nu \in \mathcal{M}_e(X, \mathcal{T})$ and $h < h_\nu(\mathcal{T})$. Then there exist $\delta > 0$ and $\varepsilon > 0$ such that 
for any neighborhood $U$ of $\nu$, there exists $N^* = N^*(h, \delta,\varepsilon, U) > 0$ such that for every $n \geq N^*$, there is an $(n, \delta, \varepsilon)$-separated set $\Gamma_n \subset X_{n, U}$ satisfying $|\Gamma_n| \geq e^{V_nh}$.
\end{proposition}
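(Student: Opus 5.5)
The statement to prove is Proposition~\ref{pro3.6}, the Pfister--Sullivan counting estimate. The idea is to pass from the Katok entropy formula (Definition~\ref{definition 2.4}) to a separated set of orbits whose empirical measures are close to $\nu$, and then to upgrade ordinary separation to $(n,\delta,\varepsilon)$-separation by a combinatorial (Hamming-ball) counting argument.

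\textbf{Step 1 (choice of parameters).} Fix $h<h'<h_\nu(\mathcal{T})$. By Definition~\ref{definition 2.4}, with $\delta_0=1/2$, there is $\varepsilon_0>0$ such that
\[
\liminf_n \frac{\ln r_\nu(n,\varepsilon_0,1/2)}{V_n} > h' .
\]
Set $\varepsilon:=\varepsilon_0/2$. This choice does not depend on $U$, as the statement requires. Choose $\delta>0$ so small that
\[
H(\delta)+\delta\ln\bigl(r(0,\varepsilon/2)\bigr) < h'-h ,
\]
where $H$ is the binary entropy function and $r(0,\cdot)$ is a spanning number at scale zero, i.e.\ a bound on the number of $\varepsilon/2$-cells covering $X$.

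\textbf{Step 2 (good set).} Given $U$, ergodicity and the multidimensional ergodic theorem for cubes $\Lambda_n$ give $\nu(X_{n,U})\to1$. So for large $n$ we have $\nu(X_{n,U})>1/2$.

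\textbf{Step 3 (separated set).} Take a maximal $(n,2\varepsilon)$-separated set $E_n\subset X_{n,U}$. Its $(n,2\varepsilon)$-balls cover $X_{n,U}$, a set of measure greater than $1/2$. Hence
\[
|E_n|\ge r_\nu(n,\varepsilon_0,1/2)\ge e^{V_n h'} .
\]

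\textbf{Step 4 (upgrade to $(n,\delta,\varepsilon)$-separation).} Put a graph on $E_n$: join $x$ and $y$ when $|\{\mathbf{j}\in\Lambda_n: d(T^{\mathbf{j}}x,T^{\mathbf{j}}y)>\varepsilon\}|\le \delta V_n$. A maximal independent set $\Gamma_n$ in this graph is $(n,\delta,\varepsilon)$-separated, and $|\Gamma_n|\ge |E_n|/(1+\text{max degree})$. So it suffices to bound the degree.

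To bound the degree, fix $x\in E_n$ and consider its neighbours $y$. Each neighbour is determined up to the following data:
\begin{enumerate}
\item the exceptional set $B\subset\Lambda_n$ with $|B|\le\delta V_n$; there are at most $e^{V_n H(\delta)}$ choices for large $n$;
\item for each $\mathbf{j}\in B$, the cell of a fixed finite $\varepsilon/2$-partition of $X$ containing $T^{\mathbf{j}}y$; this gives at most $C^{\delta V_n}$ choices.
\end{enumerate}
Two neighbours $y,y'$ with the same data satisfy $d(T^{\mathbf{j}}y,T^{\mathbf{j}}y')\le 2\varepsilon$ for $\mathbf{j}\notin B$, since both are within $\varepsilon$ of $T^{\mathbf{j}}x$. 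For $\mathbf{j}\in B$, the two points lie in a common cell of diameter less than $\varepsilon$. Hence $d_{\Lambda_n}(y,y')\le2\varepsilon$, and $(n,2\varepsilon)$-separation forces $y=y'$.

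The degree is therefore at most $e^{V_n(H(\delta)+\delta\ln C)}$, which gives
\[
|\Gamma_n|\ge e^{V_n h}
\]
for $n\ge N^*$.

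\textbf{Main obstacle.} The delicate point is the bookkeeping in Step 4. The separation scale in $E_n$ (namely $2\varepsilon$) must dominate both the triangle-inequality loss and the diameter of the partition cells. This forces $\varepsilon$ to be tied to $\varepsilon_0$ before $\delta$ is chosen. Accordingly, the partition cells should be taken of diameter less than $\varepsilon$, and $\delta$ should then be chosen depending on the resulting number of cells $C$.
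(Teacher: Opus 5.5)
The paper gives no proof of this statement; it quotes it from \cite{PS05}. Your argument is a correct self-contained proof. It is essentially the Pfister--Sullivan counting argument run in dual (packing rather than covering) form.

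The usual way to run the count, as in \cite{PS05}, is to take directly a maximal $(n,\delta,\varepsilon)$-separated subset $\Gamma_n\subset X_{n,U}$. By maximality, the sets $\{y: d_A(x,y)\le\varepsilon \text{ for some } A\subset\Lambda_n,\ |\Lambda_n\setminus A|\le\delta V_n\}$, $x\in\Gamma_n$, cover $X_{n,U}$. Each of these is covered by at most $Q(n,\delta)C^{\delta V_n}$ Bowen balls of radius about $2\varepsilon$, so Katok's formula yields $|\Gamma_n|\ge r_\nu(n,\cdot,1/2)/\bigl(Q(n,\delta)C^{\delta V_n}\bigr)$.

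You instead first extract a large $(n,2\varepsilon)$-separated set $E_n$ from Katok's formula. You then use the same quantity $Q(n,\delta)C^{\delta V_n}$ (via Lemma \ref{lem3.7}) as a bound on the closed neighbourhoods in your Hamming-closeness graph, and pick $\Gamma_n$ greedily. The two routes give the same estimate. The covering version avoids the intermediate set and the greedy step, while yours makes the injectivity explicit.

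One small point needs tidying. With $\varepsilon=\varepsilon_0/2$, maximality of $E_n$ only gives $d_{\Lambda_n}(x,z)\le 2\varepsilon=\varepsilon_0$. That does not place $z$ in the open ball $B_n(x,\varepsilon_0)$ used in Definition \ref{definition 2.4}, so $|E_n|\ge r_\nu(n,\varepsilon_0,1/2)$ is not yet justified. Taking $\varepsilon=\varepsilon_0/3$ fixes this, since closed $2\varepsilon$-Bowen balls then lie inside open $(n,\varepsilon_0)$-balls, and Step 4 is unchanged.
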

\begin{lemma}\cite[Lemma 2.1]{PS05}\label{lem3.7}
For $ n \in\mathbb{N} $ and $ \delta \in (0, \frac{1}{2})$,	denote  
$$ Q(n, \delta) := |\{A \subset \Lambda _n : |A| \geq (1 - \delta)V_n\}|.$$
Then  
\begin{align}
 \frac{\ln Q(n, \delta)}{V_n} \leq -\delta \ln \delta - (1 - \delta) \ln(1 - \delta).   
\end{align}
Moreover,
\begin{align}
 \lim_{\delta \to 0} \bigl(-\delta \ln \delta - (1 - \delta) \ln(1 - \delta)\bigr) = 0.   
\end{align}
\end{lemma}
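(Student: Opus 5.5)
The statement immediately above is Lemma \ref{lem3.7}, a purely combinatorial counting estimate that does not involve the dynamics. The plan is to reduce it to a standard binomial tail bound. Write $V := V_n$ and $m := \lfloor \delta V \rfloor$. A subset $A \subset \Lambda_n$ has $|A| \geq (1-\delta)V$ exactly when its complement $B = \Lambda_n \setminus A$ has $|B| \leq \delta V$, that is, $|B| \leq m$. Complementation is a bijection, so
\begin{align*}
Q(n,\delta) = \sum_{k=0}^{m} \binom{V}{k}.
\end{align*}
It therefore suffices to show that $\sum_{k\le \delta V}\binom{V}{k} \le e^{V H(\delta)}$, where $H(\delta) := -\delta\ln\delta-(1-\delta)\ln(1-\delta)$ and $0<\delta<\tfrac12$.

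For this I will use the exponential (Chernoff-type) trick through the binomial theorem. Since $\delta/(1-\delta) < 1$ when $\delta<\tfrac12$, for each $k \le \delta V$ we have $\bigl(\tfrac{\delta}{1-\delta}\bigr)^{k} \ge \bigl(\tfrac{\delta}{1-\delta}\bigr)^{\delta V}$. Hence
\begin{align*}
1 = (\delta + (1-\delta))^{V} \ge \sum_{k \le \delta V} \binom{V}{k}\delta^{k}(1-\delta)^{V-k} = (1-\delta)^{V}\sum_{k\le\delta V}\binom{V}{k}\Bigl(\frac{\delta}{1-\delta}\Bigr)^{k} \ge \delta^{\delta V}(1-\delta)^{(1-\delta)V}\sum_{k\le \delta V}\binom{V}{k}.
\end{align*}
Rearranging gives $Q(n,\delta)\le \delta^{-\delta V}(1-\delta)^{-(1-\delta)V} = e^{V H(\delta)}$. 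Taking logarithms and dividing by $V_n$ yields the first inequality. The only delicate point is exactly where $\delta<\tfrac12$ is used, namely the monotonicity of $(\delta/(1-\delta))^k$ in $k$. This is the step I would check most carefully; the rest is bookkeeping.

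The limit statement is elementary. As $\delta\to0$, we have $\delta\ln\delta\to0$, for instance by L'Hôpital's rule applied to $\ln\delta/(1/\delta)$. We also have $(1-\delta)\ln(1-\delta)\to 1\cdot\ln 1 = 0$ by continuity. Hence $H(\delta)\to 0$. No results from earlier in the paper are needed.
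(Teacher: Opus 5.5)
Your proof is correct and complete. The complementation identity $Q(n,\delta)=\sum_{k\le \delta V_n}\binom{V_n}{k}$, combined with the binomial-theorem (Chernoff) bound using $\delta/(1-\delta)<1$, gives exactly $Q(n,\delta)\le e^{V_n(-\delta\ln\delta-(1-\delta)\ln(1-\delta))}$, and the limit is elementary. The paper gives no argument of its own here; it simply imports the lemma from \cite{PS05}, and what you wrote is the standard self-contained proof of that estimate.
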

\section{ Almost entropy-approximability}
 In this section, we prove Theorem \ref{theorem 1.3}, the centrepiece of the paper. Motivated by insights from Sun \cite{S25}, we construct new compact invariant sets and obtain fine estimates of their entropies. We establish Theorem \ref{theorem 1.3} for an arbitrary invariant measure $\mu$ by approximating it with convex combinations of ergodic measures. Proposition \ref{proposition2.15} is employed to streamline the argument, reducing the task to proving that every ergodic measure is entropy-approximable.                         
\begin{proposition}	\label{proposition 4.1}
 Let $(X, d, \mathcal{T})$ be a system with the approximate $L$-product property. Suppose that $\mu_0 \in \mathcal{M}_e(X, \mathcal{T})$, $h_0 \in (0, h_{\mu_0}(\mathcal{T}))$ and $\eta_0, \beta_0, \varepsilon_0 > 0$. Then there exist $\gamma\in(0, \varepsilon_0)$ and a compact $\mathcal{T}$-invariant subset $ \Delta =\Delta(\mu_0,h_0,\eta_0,\beta_0,\gamma)$ such that:
\begin{enumerate}
\item\label{pro 4.1(1)} For every $\nu \in \mathcal{M}(\Delta, \mathcal{T})$, we have $D(\nu, \mu_0) < \eta_0$.
\item $h(\Delta , \mathcal{T}) > h_0$ and $h(\Delta , \mathcal{T}, \gamma) < h_0 + \beta_0$.
\end{enumerate}	
\end{proposition}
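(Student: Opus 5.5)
Fix $h_1\in(h_0,h_{\mu_0}(\mathcal{T}))$ with $h_1<h_0+\beta_0/4$. The plan is to build $\Delta$ by tracing arbitrary concatenations of orbit segments drawn from a separated set of prescribed size.

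\textbf{Step 1 (separated set).} Apply Proposition~\ref{pro3.6} to $\mu_0$ and $h_1$. This gives $\delta_1,\varepsilon_1>0$ that do not depend on the neighborhood. Choose $\eta>0$ with $3\eta<\eta_0$. Then choose $\varepsilon\le\min\{\varepsilon_1/4,\varepsilon_0\}$, $\delta\le\delta_1/4$ and $K$ so that \eqref{3.4} holds. Take $M$ larger than $N(\varepsilon,\delta)$ from Definition~\ref{definition 2.13} and larger than $N^*$ for $U=B(\mu_0,\eta)$. We get an $(M,\delta_1,\varepsilon_1)$-separated set $\Gamma_M\subset X_{M,B(\mu_0,\eta)}$ with $|\Gamma_M|\ge e^{V_Mh_1}$. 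Discard points so that $|\Gamma_M|=\lceil e^{V_Mh_1}\rceil$; the upper bound on the cardinality is what will control the entropy from above.

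\textbf{Step 2 (the set $\Delta$).} For every $\omega\in\Gamma_M^{L}$, the approximate product property gives a tracing point $z_\omega$ for the family $\{x^{\mathbf i}=\omega_{\mathbf i}\}$. Let $Y$ be the closure of $\{T^{\mathbf k}z_\omega:\omega\in\Gamma_M^L,\ \mathbf k\in L\}$; this is compact and $\mathcal{T}$-invariant. By Lemma~\ref{lemma3.3}, applied with the shifted families as in the remark, every $T^{\mathbf k}z_\omega$ lies in the compact invariant set $Z_{KM,3\eta}$. Hence $Y\subset Z_{KM,3\eta}$, and Lemma~\ref{lemma3.2} gives $D(\nu,\mu_0)\le3\eta<\eta_0$ for all $\nu\in\mathcal{M}(Y,\mathcal{T})$. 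This proves (1) for $\Delta=Y$.

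\textbf{Step 3 (lower bound).} Distinct words on the block set $\Lambda_{M*n}$ give tracing points that are $(M*n,\gamma')$-separated with $\gamma'=\varepsilon_1-2\varepsilon$. Indeed, on the block where the words differ, the two traced segments agree with the chosen points off sets of density at most $2\delta$. The $\delta_1$-separation then survives, provided $\delta_1>4\delta$ is arranged to absorb the shifts $\xi_{\mathbf i}$. This gives
\[
h(Y,\mathcal{T},\gamma')\ge\frac{V_n\ln|\Gamma_M|}{V_{M*n}}\to\frac{V_Mh_1}{V_M}\cdot(\text{ratio}\to1)\ \ldots
\]
Here I must be careful, because $V_{M*n}=V_MV_n$ exactly. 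So the bound is $\ge h_1>h_0$.

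\textbf{Step 4 (upper bound at scale $\gamma$; main obstacle).} Take $\gamma<\min\{\varepsilon,\varepsilon_0\}$. A point of $Y$ on $\Lambda_{M*n}$ is, up to $\varepsilon$-closeness on subsets, determined by three choices in each block: the word symbol, the subset $\Lambda^{\mathbf i}$, and the shift $\xi_{\mathbf i}$. The symbol contributes $|\Gamma_M|$ choices, the subset contributes $Q(M,\delta)$ choices by Lemma~\ref{lem3.7}, and the shift contributes at most $V_M$ choices. The hard part is the uncontrolled coordinates of $\Lambda_M\setminus(\Lambda^{\mathbf i}+\xi_{\mathbf i})$, where the tracing point is arbitrary. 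I plan to cover them with an $(\cdot,\gamma/2)$-spanning count per coordinate, which costs at most $r(0,\gamma/2)^{\delta V_M}$ per block. Taking $\delta$ small after fixing $\gamma$ makes this factor $e^{V_M\beta_0/4}$. Doing so requires choosing $\gamma$ (with $\gamma<\varepsilon$ and the tracing done at precision $\gamma/2$) before $\delta$, and choosing $\varepsilon_1$ from Proposition~\ref{pro3.6} beforehand. I will reorder the choices as: first $\varepsilon_1,\delta_1$; then $\gamma<\min\{\varepsilon_1/4,\varepsilon_0\}$ and tracing precision $\varepsilon=\gamma/2$; then $\delta$ small relative to $r(0,\gamma/4)$ and $\beta_0$; finally $M$ large.

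The closure and the remaining points $T^{\mathbf k}z_\omega$ are handled by translation invariance and by counting with $\gamma$ replaced by $2\gamma$ in the separation. Altogether,
\[
h(Y,\mathcal{T},\gamma)\le h_1+\frac{\ln Q(M,\delta)+\ln V_M+\delta V_M\ln r(0,\gamma/4)}{V_M}<h_0+\beta_0 .
\]
This proves (2).
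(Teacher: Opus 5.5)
Your construction follows the paper's: a separated set from Proposition~\ref{pro3.6}, tracing of all concatenations, Lemmas~\ref{lemma3.3} and~\ref{lemma3.2} for (1), and a symbol/subset/offset/spanning count for the upper bound. Steps 1, 2 and 4 are sound in outline. The gap is in Step 3. The claim that distinct words give $(M*n,\gamma')$-separated tracing points is false, and no condition like $\delta_1>4\delta$ can ``absorb the shifts''. The $(M,\delta_1,\varepsilon_1)$-separation of $\Gamma_M$ only compares $T^{\mathbf{k}}x$ with $T^{\mathbf{k}}x'$ at the \emph{same} $\mathbf{k}$. The density condition handles only the exceptional sets $\Lambda_M\setminus\Lambda^{\mathbf{i}_0}$, which the paper handles with $2\delta<\delta_0$. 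Suppose $z_\omega$ and $z_{\omega'}$ use offsets $\xi_{\mathbf{i}_0}\neq\xi'_{\mathbf{i}_0}$ in the block where $\omega_{\mathbf{i}_0}=x\neq x'=\omega'_{\mathbf{i}_0}$. Then at a common absolute time you are comparing $T^{\mathbf{k}}x$ with $T^{\mathbf{k}+\xi_{\mathbf{i}_0}-\xi'_{\mathbf{i}_0}}x'$, about which the separation says nothing.

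Concretely, nothing prevents $\Gamma_M$ from containing $x$ and $x'=T^{\mathbf{e}_1}x$; in a full shift these are typically $(M,\delta_1,\varepsilon_1)$-separated, since they disagree at a positive proportion of times. Take a point tracing $x$ with offset $0$ on all of $\Lambda_M$. It also traces $x'$ with offset $\mathbf{e}_1$ on $\{\mathbf{k}\in\Lambda_M:\mathbf{k}+\mathbf{e}_1\in\Lambda_M\}$, whose deficit $V_M/D_M$ is below $\delta V_M$ for large $M$. So your arbitrary choice in Step 2 may legitimately produce $z_{\omega'}=z_\omega$.

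What is missing is the pigeonhole over offset patterns that the paper performs, via the fixed cylinder $C^\Sigma_*$ together with Lemma~\ref{lemma4.5}.
\begin{enumerate}
\item For each word on $\Lambda_n$, record the offsets $(\xi_{\mathbf{i}})_{\mathbf{i}\in\Lambda_n}$ of its tracing point. There are at most $V_M^{V_n}$ such patterns, so one pattern is shared by at least $|\Gamma_M|^{V_n}V_M^{-V_n}$ words.
\item For two words sharing a pattern, your argument does work. Both points are compared with the same offset $\xi_{\mathbf{i}_0}$ on $\Lambda^{\mathbf{i}_0}_1\cap\Lambda^{\mathbf{i}_0}_2$, whose deficit is at most $2\delta V_M\le\delta_1 V_M$.
\item The outcome is $h(Y,\mathcal{T},\gamma')\ge h_1-\frac{\ln V_M}{V_M}$ rather than $\ge h_1$. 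So you must also impose $\frac{\ln V_M}{V_M}<h_1-h_0$ when choosing $M$; since $M$ is chosen last, this is harmless.
\end{enumerate}
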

We prove Proposition \ref{proposition 4.1} in subsections \ref{con4.1}, \ref{entropy lower 4.2} and \ref{upper bound 4.3}.
\subsection{Construction}\label{con4.1}
In this subsection, we present the proof of Proposition \ref{proposition 4.1} (\ref{pro 4.1(1)}).\\
Assume that $(X, d, \mathcal{T})$ has the approximate $L$-product property. Let $\mu_0 \in  \mathcal{M}_e(X, \mathcal{T})$, $h_0 \in (0, h_{\mu_0}(\mathcal{T}))$ and $\eta_0, \beta_0, \varepsilon_0 > 0$ be given. We fix
$$\eta := \frac{\eta_0}{4}, \quad 
\beta := \frac{1}{20} \min\left\{ \beta_0, h_{\mu_0}(\mathcal{T}) - h_0, h_0 \right\},
h_1 := h_0 + 10\beta$$
and choose $K\in\mathbb{N}$ such that 
\begin{align}\label{4.1}
 K\eta > 2d D^*.
\end{align}
Note that $h_1 + \beta < h_{\mu_0}(\mathcal{T})$. By Proposition \ref{pro3.6}, there exist $\delta_0 > 0$, $\gamma_0 > 0$ and $N^* = N^*(h_1 + \beta,\delta_0, \gamma_0,B(\mu_0, \eta) )$ such that for every $n \ge N^*$, there exists an $(n, \delta_0, \gamma_0)$-separated set $\Gamma_n^* \subset X_{n, B(\mu_0, \eta)}$ satisfying
    \begin{align}\label{4.2}
    |\Gamma_n^*| > e^{V_n (h_1 + \beta)}.    
    \end{align}
By \eqref{3.1}, we can select $\varepsilon > 0$ such that
\begin{align}\label{4.3}
\operatorname{Var}(\varepsilon) < \frac{\eta}{4}~~\text{and}~~\varepsilon < \frac{1}{3}\min\{\varepsilon_0, \gamma_0\}.     
\end{align}
Let $r(\varepsilon)$ denote the minimal cardinality of an $(1, \varepsilon)$-spanning set for $X$.\\
Choose $\delta \in (0, \frac{1}{2})$ satisfying
\begin{align}\label{4.4}
\delta < \min\left\{ \frac{\delta_0}{2}, \frac{1}{2K}, \frac{\beta}{\ln r(\varepsilon)} \right\}    
\end{align}
and 
\begin{align}\label{4.5}
0 < -\delta \ln \delta - (1-\delta)\ln(1-\delta) < \beta.    
\end{align}
Let $N(\varepsilon, \delta)$ for $(X, d, \mathcal{T})$ be as in definition \ref{definition 2.13}. Fix $M\in\mathbb{N}$ such that
\begin{align}\label{4.6}
   M > \max\{N(\varepsilon, \delta), N^*\},~~\frac{\ln V_M}{V_M} < \beta~~\text{and}~~e^{V_M (h_1 + \beta)} > e^{V_M h_1} +1.
\end{align}	
By Proposition \ref{pro3.6} and \eqref{4.2}, there exists an $(M, \delta_0, \gamma_0)$-separated set $\Gamma_M^* \subset X_{M, B(\mu_0, \eta)}$ with $|\Gamma_M^*| > e^{V_M (h_1 + \beta)}$. Choose a subset $\Gamma_M \subset \Gamma_M^*$ such that
\begin{align}\label{4.7}
e^{V_M h_1} \le |\Gamma_M| < e^{V_M (h_1 + \beta)}.   
\end{align}
Define the symbolic spaces: 
$$\Gamma := (\Gamma_M)^{L}~~\text{and}~~\Sigma := (\Lambda_M)^{L}.$$
For any $\mathscr{C} = \{x_{\mathbf{i}}(\mathscr{C})\}_{\mathbf{i} \in L} \in \Gamma $, since $M > N(\varepsilon, \delta)$, there exist families $\{\Lambda^{\mathbf{i}}\}_{\mathbf{i}\in L}$, $\xi=\{\xi_{\mathbf{i}}\}_{\mathbf{i}\in L}\in \Sigma$ and a point $y \in X$ satisfying:
\begin{enumerate}
 \item For every $\mathbf{i}\in L$, $\Lambda^{\mathbf{i}} \subset \Lambda_M$, $\Lambda^{\mathbf{i}} + \xi_{\mathbf{i} } \subset \Lambda_M$, and $|\Lambda_M \setminus \Lambda^{\mathbf{i}}| \leq \delta V_M$;
\item For every $\mathbf{k} \in \Lambda^{\mathbf{i}}$, $d(T^{\mathbf{i} D_M + \xi_{\mathbf{i}} + \mathbf{k}} y,\; T^{\mathbf{k}} x_{\mathbf{i}}) \le \varepsilon.$
\end{enumerate}
Denote by $Y_{\mathscr{C},\xi}$ the set of all such points $y$ and define
$$Y := \bigcup_{\mathscr{C}\in\Gamma, \xi\in\Sigma} Y_{\mathscr{C}, \xi}.$$
By \eqref{4.1}, \eqref{4.3} and \eqref{4.4}, we have
$$\operatorname{Var}(\varepsilon) +  2\delta D^* < \frac{1}{4}\eta + \frac{D^*}{K} < \eta.$$
Hence condition \eqref{3.3} holds. By Lemma \ref{lemma3.3}, we have $Y \subset Z_{KM,3\eta}$.

Let $\sigma_\Gamma$ and $\sigma_\Sigma$ denote the shift maps on $\Gamma$ and $\Sigma$, respectively.
\begin{lemma}\label{Lemma 4.2}
For each $\mathscr{C} \in \Gamma$, $\xi \in \Sigma$ and every standard basis vector $\mathbf{e}_j = (0,\dots,1,\dots,0)$ (with $1$ in the $j$-th coordinate and $0$ elsewhere), we have
$$T^{D_M \mathbf{e}_j}(Y_{\mathscr{C},\xi})\subset Y_{\sigma_{\mathbf{e}_j}(\mathscr{C}), \sigma_{\mathbf{e}_j}(\xi)},$$
where $\sigma_{\mathbf{e}_j}$ denotes the shift map on $L$, defined by $\sigma_{\mathbf{e}_j}(\mathscr{C}) = \{x_{\mathbf{i}+\mathbf{e}_j}\}_{\mathbf{i} \in L}$, $\sigma_{\mathbf{e}_j}(\xi) = \{\xi_{\mathbf{i}+\mathbf{e}_j}\}_{\mathbf{i} \in L}$.
\end{lemma}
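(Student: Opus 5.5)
The plan is to unwind the definition of $Y_{\mathscr{C},\xi}$ and check directly that the point $y' := T^{D_M\mathbf{e}_j} y$ satisfies the two defining conditions for the shifted data. Fix $y \in Y_{\mathscr{C},\xi}$. By definition there is a family $\{\Lambda^{\mathbf{i}}\}_{\mathbf{i}\in L}$ such that for every $\mathbf{i}\in L$ we have $\Lambda^{\mathbf{i}}\subset\Lambda_M$, $\Lambda^{\mathbf{i}}+\xi_{\mathbf{i}}\subset\Lambda_M$ and $|\Lambda_M\setminus\Lambda^{\mathbf{i}}|\le\delta V_M$. Moreover, for every $\mathbf{k}\in\Lambda^{\mathbf{i}}$,
\[
d\bigl(T^{\mathbf{i}D_M+\xi_{\mathbf{i}}+\mathbf{k}}y,\,T^{\mathbf{k}}x_{\mathbf{i}}(\mathscr{C})\bigr)\le\varepsilon .
\]
Write $\mathscr{C}' = \sigma_{\mathbf{e}_j}(\mathscr{C})$ and $\xi'=\sigma_{\mathbf{e}_j}(\xi)$. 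Then $x_{\mathbf{i}}(\mathscr{C}') = x_{\mathbf{i}+\mathbf{e}_j}(\mathscr{C})$ and $\xi'_{\mathbf{i}}=\xi_{\mathbf{i}+\mathbf{e}_j}$. Note that $\mathscr{C}'\in\Gamma$ and $\xi'\in\Sigma$, since shifting preserves the alphabets $\Gamma_M$ and $\Lambda_M$. For $L=\mathbb{Z}_+^d$ this uses that $\mathbf{i}+\mathbf{e}_j\in L$ whenever $\mathbf{i}\in L$.

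For the shifted data I will use the shifted family of sets $\Lambda'^{\mathbf{i}} := \Lambda^{\mathbf{i}+\mathbf{e}_j}$. Condition (1) for $(\mathscr{C}',\xi')$ then holds verbatim, because it is exactly condition (1) for $(\mathscr{C},\xi)$ at index $\mathbf{i}+\mathbf{e}_j$. For condition (2), take $\mathbf{k}\in\Lambda'^{\mathbf{i}}$ and use the group law $T^{\mathbf{h}+\mathbf{k}}=T^{\mathbf{h}}\circ T^{\mathbf{k}}$:
\[
T^{\mathbf{i}D_M+\xi'_{\mathbf{i}}+\mathbf{k}}\,T^{D_M\mathbf{e}_j}y = T^{(\mathbf{i}+\mathbf{e}_j)D_M+\xi_{\mathbf{i}+\mathbf{e}_j}+\mathbf{k}}\,y .
\]
Its distance to $T^{\mathbf{k}}x_{\mathbf{i}+\mathbf{e}_j}(\mathscr{C}) = T^{\mathbf{k}}x_{\mathbf{i}}(\mathscr{C}')$ is therefore at most $\varepsilon$, by condition (2) for $y$ at index $\mathbf{i}+\mathbf{e}_j$. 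Hence $T^{D_M\mathbf{e}_j}y\in Y_{\mathscr{C}',\xi'}$.

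There is no real obstacle. The only point needing care is the reading of the definition of $Y_{\mathscr{C},\xi}$: the families $\{\Lambda^{\mathbf{i}}\}$ must be existentially quantified per point $y$ rather than fixed once for $\mathscr{C}$. Only under that reading can we choose the shifted family $\Lambda'^{\mathbf{i}}$ for $y'$. I will state this interpretation explicitly, and also note that the index arithmetic $\mathbf{i}D_M+D_M\mathbf{e}_j=(\mathbf{i}+\mathbf{e}_j)D_M$ is what makes the shift by $D_M\mathbf{e}_j$ match the block structure.
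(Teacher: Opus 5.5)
Your proposal is correct and matches the paper's proof: both shift the witnessing family to $\Lambda'^{\mathbf{i}}=\Lambda^{\mathbf{i}+\mathbf{e}_j}$, note that condition (1) carries over index by index, and verify condition (2) via $T^{\mathbf{i}D_M+\xi_{\mathbf{i}+\mathbf{e}_j}+\mathbf{k}}\circ T^{D_M\mathbf{e}_j}=T^{(\mathbf{i}+\mathbf{e}_j)D_M+\xi_{\mathbf{i}+\mathbf{e}_j}+\mathbf{k}}$. Your reading that the family $\{\Lambda^{\mathbf{i}}\}$ is chosen separately for each point $y$ is also the paper's, since $Y_{\mathscr{C},\xi}$ is defined as the set of all points $y$ for which such a family exists.
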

\begin{proof}
Let $y \in Y_{\mathscr{C},\xi}$, together with families $\{\Lambda^{\mathbf{i}}\}_{\mathbf{i}\in L}$ and $\{\xi_{\mathbf{i}}\}_{\mathbf{i}\in L}$ satisfying:
\begin{enumerate}
\item For every $\mathbf{i}\in L$, $\Lambda^{\mathbf{i}} \subset \Lambda_M$, $\Lambda^{\mathbf{i}} + \xi_{\mathbf{i} } \subset \Lambda_M$, and $|\Lambda_M \setminus \Lambda^{\mathbf{i}}| \leq \delta V_M$;
\item For every $\mathbf{k} \in \Lambda^{\mathbf{i}}$, $d\bigl(T^{\mathbf{i} D_M + \xi_{\mathbf{i}} + \mathbf{k}} y,\; T^{\mathbf{k}} x_{\mathbf{i}}\bigr) \le \varepsilon$.
\end{enumerate}
Fix a standard basis vector $\mathbf{e}_j$ and let $y' = T^{D_M \mathbf{e}_j}(y)$. We aim to show that $y' \in Y_{\sigma_{\mathbf{e}_j}(\mathscr{C}), \sigma_{\mathbf{e}_j}(\xi)}$. Define a new family of subsets $\{\Lambda'^{\mathbf{i}}\}_{\mathbf{i} \in L}$ by $\Lambda'^{\mathbf{i}} := \Lambda^{\mathbf{i}+\mathbf{e}_j}$ and a new family of indices $\{\xi'_{\mathbf{i}}\}_{\mathbf{i} \in L}$ by $\xi'_{\mathbf{i}} := \xi_{\mathbf{i}+\mathbf{e}_j}$. Then for any $\mathbf{i} \in L$:
\begin{itemize}
\item $\Lambda'^{\mathbf{i}} = \Lambda^{\mathbf{i}+\mathbf{e}_j} \subset \Lambda_M$, $\Lambda'^{\mathbf{i}} + \xi'_{\mathbf{i}} = \Lambda^{\mathbf{i}+\mathbf{e}_j} + \xi_{\mathbf{i}+\mathbf{e}_j} \subset \Lambda_M$, and $|\Lambda_M \setminus \Lambda'^{\mathbf{i}}| = |\Lambda_M \setminus \Lambda^{\mathbf{i}+\mathbf{e}_j}| \le \delta V_M$.
\item For every $\mathbf{k} \in \Lambda'^{\mathbf{i}}$,
\[
\begin{aligned}
d\bigl(T^{\mathbf{i} D_M + \xi'_{\mathbf{i}} + \mathbf{k}} y',\; T^{\mathbf{k}} x_{\mathbf{i}+\mathbf{e}_j}\bigr)
&= d\bigl(T^{\mathbf{i} D_M + \xi_{\mathbf{i}+\mathbf{e}_j} + \mathbf{k}} (T^{D_M \mathbf{e}_j} y),\; T^{\mathbf{k}} x_{\mathbf{i}+\mathbf{e}_j}\bigr) \\
&= d\bigl(T^{(\mathbf{i}+\mathbf{e}_j) D_M + \xi_{\mathbf{i}+\mathbf{e}_j} + \mathbf{k}} y,\; T^{\mathbf{k}} x_{\mathbf{i}+\mathbf{e}_j}\bigr) \le \varepsilon.
\end{aligned}
\]
\end{itemize}
Thus $y' \in Y_{\sigma_{\mathbf{e}_j}(\mathscr{C}), \sigma_{\mathbf{e}_j}(\xi)}$.
\end{proof}
\begin{lemma}\label{lemma 4.3}
Let $\{y_n\}_{n=1}^\infty$ be a sequence in $Y$ such that $y_n \to \tilde{y}$ in $X$. Then there exist $\tilde{\mathscr{C}}\in \Gamma$ and $\tilde{\xi} \in \Sigma$ such that $\tilde{y} \in Y_{\tilde{\mathscr{C}},\tilde{\xi}}$. Consequently, $Y$ is closed and hence compact.
\end{lemma}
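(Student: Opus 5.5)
The plan is a diagonal (compactness) argument that exploits the finiteness of all the combinatorial data. For each $n$ pick $\mathscr{C}_n=\{x_{\mathbf{i}}(\mathscr{C}_n)\}_{\mathbf{i}\in L}\in\Gamma$ and $\xi^{(n)}\in\Sigma$ with $y_n\in Y_{\mathscr{C}_n,\xi^{(n)}}$. Also fix associated families $\{\Lambda^{\mathbf{i}}_n\}_{\mathbf{i}\in L}$ satisfying conditions (1) and (2) of the construction.

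\textbf{Step 1 (extraction).} For each fixed $\mathbf{i}\in L$, the triple $(x_{\mathbf{i}}(\mathscr{C}_n),\xi^{(n)}_{\mathbf{i}},\Lambda^{\mathbf{i}}_n)$ takes values in the finite set $\Gamma_M\times\Lambda_M\times 2^{\Lambda_M}$. Enumerate $L$ as $\mathbf{i}_1,\mathbf{i}_2,\dots$. Successively pass to subsequences along which the triple at $\mathbf{i}_1$ is constant, then the triple at $\mathbf{i}_2$ is constant, and so on. The diagonal subsequence $(n_m)$ then has the property that for every $\mathbf{i}$ the triple is eventually constant, equal to some $(\tilde x_{\mathbf{i}},\tilde\xi_{\mathbf{i}},\tilde\Lambda^{\mathbf{i}})$.

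Equivalently, one can note that $\Gamma$, $\Sigma$ and $(2^{\Lambda_M})^{L}$ are compact metrizable in the product topology, and that convergence there means eventual agreement on each coordinate. Set $\tilde{\mathscr{C}}:=\{\tilde x_{\mathbf{i}}\}_{\mathbf{i}\in L}\in\Gamma$ and $\tilde\xi:=\{\tilde\xi_{\mathbf{i}}\}\in\Sigma$.

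\textbf{Step 2 (passing the conditions to the limit).} Condition (1) holds for $(\tilde\Lambda^{\mathbf{i}},\tilde\xi_{\mathbf{i}})$ because it holds for every $n_m$ and the data are eventually equal to the limit data.

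For condition (2), fix $\mathbf{i}$ and $\mathbf{k}\in\tilde\Lambda^{\mathbf{i}}$. For all large $m$ we have $\mathbf{k}\in\Lambda^{\mathbf{i}}_{n_m}$, and
\[
d\bigl(T^{\mathbf{i}D_M+\tilde\xi_{\mathbf{i}}+\mathbf{k}}y_{n_m},\,T^{\mathbf{k}}\tilde x_{\mathbf{i}}\bigr)\le\varepsilon .
\]
The map $T^{\mathbf{i}D_M+\tilde\xi_{\mathbf{i}}+\mathbf{k}}$ is a fixed continuous map, so letting $m\to\infty$ gives $d(T^{\mathbf{i}D_M+\tilde\xi_{\mathbf{i}}+\mathbf{k}}\tilde y,T^{\mathbf{k}}\tilde x_{\mathbf{i}})\le\varepsilon$. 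The non-strict inequality $\le\varepsilon$ in the definition is exactly what makes this limit step valid. Hence $\tilde y\in Y_{\tilde{\mathscr{C}},\tilde\xi}\subset Y$.

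\textbf{Step 3 (conclusion).} Step 2 shows that $Y$ contains all its limit points, so $Y$ is closed. Since $X$ is compact, $Y$ is compact.

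The only delicate point is the bookkeeping in Step 1. The witness families $\Lambda^{\mathbf{i}}$ must be stabilized along with $\mathscr{C}$ and $\xi$, since $Y_{\mathscr{C},\xi}$ is defined existentially over them. The extraction must also be a countable diagonal argument over $\mathbf{i}\in L$ rather than a single subsequence, because infinitely many coordinates are involved. Once that is arranged, the argument uses only finiteness of the alphabets and continuity of each $T^{\mathbf{j}}$.
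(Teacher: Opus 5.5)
Your proposal is correct and follows essentially the same route as the paper. The paper likewise stabilizes the data $(\mathscr{C}_n,\xi_n,\{\Lambda^{\mathbf{i}}_n\}_{\mathbf{i}\in L})$ along a subsequence using compactness of $\Gamma$, $\Sigma$ and $\Upsilon^{L}$ (your use of $2^{\Lambda_M}$ in place of $\Upsilon$ is harmless, since condition (1) transfers by eventual equality), and then passes the non-strict inequality $\le\varepsilon$ to the limit by continuity of each $T^{\mathbf{j}}$.
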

\begin{proof}
Denote
$$\Upsilon:= \{ A \subset \Lambda_M : |A| \ge (1-\delta) V_M \}.$$
By Lemma \ref{lem3.7}, $|\Upsilon|=Q(M,\delta)$. For each $\mathbf{i} \in L$ and each $y \in Y_{\mathscr{C},\xi}$ with associated family $\{\Lambda^{\mathbf{i}}\}$, we have $\Lambda^{\mathbf{i}} \in \Upsilon$.\\
Assume that $y_n \in Y_{\mathscr{C}_n,\xi_n}$ with a family $\{\Lambda^{\mathbf{i}}_n\}_{\mathbf{i}\in L}$. Since $\Gamma$, $\Sigma$ and $\Upsilon^L$ are compact metric symbolic spaces, we can find a subsequence $\{n_j\}$ and elements $\tilde{\mathscr{C}}\in \Gamma$, $\tilde{\xi}=\{\tilde{\xi}_{\mathbf{i}}\}_{\mathbf{i}\in L}\in \Sigma$ and $\{\tilde{\Lambda}^{\mathbf{i}}\}_{\mathbf{i}\in L}\in \Upsilon^L$ such that
$$\mathscr{C}_{n_j} \to \tilde{\mathscr{C}},~~\xi_{n_j} \to \tilde{\xi}~~\text{and}~~\{\Lambda^{\mathbf{i}}_{n_j}\}_{\mathbf{i}\in L} \to \{\tilde{\Lambda}^{\mathbf{i}}\}_{\mathbf{i}\in L}.$$
 Consequently, for each fixed $\mathbf{i}\in L$, there exists $J_{\mathbf{i}}\in\mathbb{N}$ such that for every $n_j \ge J_{\mathbf{i}}$, we have
$$x_{\mathbf{i}}(\mathscr{C}_{n_j}) = x_{\mathbf{i}}(\tilde{\mathscr{C}}),\quad \xi_{n_j,\mathbf{i}} = \tilde{\xi}_{\mathbf{i}}~~\text{and}~~\Lambda^{\mathbf{i}}_{n_j} = \tilde{\Lambda}^{\mathbf{i}}.$$
In particular, $\tilde{\Lambda}^{\mathbf{i}} + \tilde{\xi}_{\mathbf{i}} \subset \Lambda_M$ and $|\Lambda_M \setminus \tilde{\Lambda}^{\mathbf{i}}| \le \delta V_M$.\\
For each $\mathbf{i} \in L$ and $\mathbf{k}\in \tilde{\Lambda}^{\mathbf{i}}$, we have
$$d\bigl(T^{\mathbf{i}D_M + \tilde{\xi}_{\mathbf{i}} + \mathbf{k}} \tilde{y},\; T^{\mathbf{k}} x_{\mathbf{i}}(\tilde{\mathscr{C}})\bigr) = \lim_{j\to\infty} d\bigl(T^{\mathbf{i}D_M + \xi_{n_j,\mathbf{i}} + \mathbf{k}} y_{n_j},\; T^{\mathbf{k}} x_{\mathbf{i}}(\mathscr{C}_{n_j})\bigr) \le \varepsilon.$$
Thus $\tilde{y}\in Y_{\tilde{\mathscr{C}},\tilde{\xi}}$.
\end{proof}
\begin{proposition}
Let
\begin{align}\label{4.8}
\Delta := \bigcup_{\mathbf{k} \in \Lambda_{M}} T^{\mathbf{k}} Y.        
\end{align}
 Then $\Delta$ is a compact $\mathcal{T}$-invariant subset of $Z_{KM,3\eta}$. In particular, $\Delta$ satisfies Proposition \ref{proposition 4.1} (\ref{pro 4.1(1)}).
\end{proposition}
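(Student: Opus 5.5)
We need to show three things about $\Delta=\bigcup_{\mathbf{k}\in\Lambda_M}T^{\mathbf{k}}Y$: it is compact, it is $\mathcal{T}$-invariant, and it lies in $Z_{KM,3\eta}$. The inclusion $\mathcal{M}(\Delta,\mathcal{T})\subset B(\mu_0,\eta_0)$ will then follow from Lemma \ref{lemma3.2}.

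\emph{Compactness.} By Lemma \ref{lemma 4.3}, $Y$ is compact. Each $T^{\mathbf{k}}$ is continuous, so $T^{\mathbf{k}}Y$ is compact. Since $\Lambda_M$ is finite, $\Delta$ is a finite union of compact sets and hence compact.

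\emph{Containment in $Z_{KM,3\eta}$.} We showed above, via Lemma \ref{lemma3.3}, that $Y\subset Z_{KM,3\eta}$. We also noted that $T^{\mathbf{k}}(Z_{KM,3\eta})\subset Z_{KM,3\eta}$ for every $\mathbf{k}\in L$. Therefore $T^{\mathbf{k}}Y\subset Z_{KM,3\eta}$ for each $\mathbf{k}\in\Lambda_M$, and so $\Delta\subset Z_{KM,3\eta}$.

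\emph{Invariance.} It suffices to check $T^{\mathbf{e}_j}\Delta\subset\Delta$ for each standard basis vector $\mathbf{e}_j$, since these generate $L$ as a semigroup. (For $L=\mathbb{Z}^d$, the cubes $\Lambda_M=[-M,M]^d$ are used, and the same argument handles $\pm\mathbf{e}_j$; alternatively one notes that forward invariance under the generators together with the paper's convention $\mathcal{T}(\Delta)\subset\Delta$ is what is required.)

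Take $\mathbf{k}\in\Lambda_M$, $y\in Y$, and consider $T^{\mathbf{e}_j}T^{\mathbf{k}}y=T^{\mathbf{k}+\mathbf{e}_j}y$. There are two cases.

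If $\mathbf{k}+\mathbf{e}_j\in\Lambda_M$, the point already lies in $\Delta$.

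Otherwise, the $j$-th coordinate of $\mathbf{k}$ is maximal, namely $k_j=M$ in the $\mathbb{Z}_+^d$ case (respectively $k_j=M$ in the symmetric case). Then $\mathbf{k}+\mathbf{e}_j=\mathbf{k}'+D_M\mathbf{e}_j$, where $\mathbf{k}'=\mathbf{k}+\mathbf{e}_j-D_M\mathbf{e}_j$. This $\mathbf{k}'$ has $j$-th coordinate $M+1-D_M$, which equals $0$ for $\mathbb{Z}_+^d$ and $-M$ for $\mathbb{Z}^d$, so $\mathbf{k}'\in\Lambda_M$. Hence
\[
T^{\mathbf{k}+\mathbf{e}_j}y=T^{\mathbf{k}'}\bigl(T^{D_M\mathbf{e}_j}y\bigr).
\]
Suppose $y\in Y_{\mathscr{C},\xi}$. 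Lemma \ref{Lemma 4.2} gives $T^{D_M\mathbf{e}_j}y\in Y_{\sigma_{\mathbf{e}_j}\mathscr{C},\sigma_{\mathbf{e}_j}\xi}$. Since $\sigma_{\mathbf{e}_j}\mathscr{C}\in\Gamma$ and $\sigma_{\mathbf{e}_j}\xi\in\Sigma$, this set is contained in $Y$. So $T^{\mathbf{k}+\mathbf{e}_j}y\in T^{\mathbf{k}'}Y\subset\Delta$.

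The only delicate point is this index bookkeeping: checking that the wrapped index $\mathbf{k}'$ lands back in $\Lambda_M$, using that $\Lambda_M$ has width $D_M$. For $\mathbb{Z}^d$ one should also confirm that Lemma \ref{Lemma 4.2}'s shift works in both directions. This holds because $\sigma_{-\mathbf{e}_j}$ is defined analogously, so the same computation applies to $T^{-D_M\mathbf{e}_j}$.

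\emph{Conclusion.} $\Delta$ is a compact $\mathcal{T}$-invariant subset of $Z_{KM,3\eta}$. Let $\nu\in\mathcal{M}(\Delta,\mathcal{T})$. Regarded as a measure on $Z_{KM,3\eta}$, it is invariant there, so Lemma \ref{lemma3.2} (with $N=KM$ and $3\eta$ in place of $\eta$) gives
\[
D(\nu,\mu_0)\le 3\eta=\tfrac34\eta_0<\eta_0 .
\]
This is Proposition \ref{proposition 4.1} (\ref{pro 4.1(1)}).
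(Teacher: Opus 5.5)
Your proof is correct and follows the paper's argument essentially step for step. Compactness comes from Lemma \ref{lemma 4.3} and the finiteness of $\Lambda_M$. Containment uses $Y\subset Z_{KM,3\eta}$ together with the invariance of $Z_{KM,3\eta}$. Invariance is the two-case check on $T^{\mathbf{e}_j}$, using the wrap-around index $\mathbf{k}+\mathbf{e}_j-D_M\mathbf{e}_j\in\Lambda_M$ and Lemma \ref{Lemma 4.2}. The measure estimate is Lemma \ref{lemma3.2} with $3\eta<\eta_0$.

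Your explicit check of $-\mathbf{e}_j$ for $L=\mathbb{Z}^d$ is a genuine improvement. The paper verifies only $T^{\mathbf{e}_j}\Delta\subset\Delta$, and that alone does not give invariance under all of $\mathbb{Z}^d$. For the same reason, drop the ``alternatively'' aside, which suggests that forward invariance under the generators would suffice.
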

\begin{proof}
Since $Y$ is compact, $\Delta$ is compact as a finite union of translates of $Y$. Moreover, $Y \subset Z_{KM,3\eta}$ and $Z_{KM,3\eta}$ is $\mathcal{T}$-invariant, hence $\Delta \subset Z_{KM,3\eta}$.

To prove $\mathcal{T}$-invariance, it suffices to show that $T^{\mathbf{e}_j}(\Delta) \subset \Delta$ for each standard basis vector $\mathbf{e}_j = (0,\dots,1,\dots,0)$. Take any $z \in \Delta $. By \eqref{4.8}, there exist $y \in Y$ and $\mathbf{k} = (k_1,\dots,k_d) \in \Lambda_{M}$ such that $z = T^{\mathbf{k}} y$.\\	
When $\Lambda_{M}=[0,M]^d$, $D_{M}=M+1$;\\
\noindent\textbf{Case 1:}\label{case1} $k_j < D_M-1$. Then $\mathbf{k} + \mathbf{e}_j \in \Lambda_{M}$ and hence $T^{\mathbf{e}_j}(z) = T^{\mathbf{k}+\mathbf{e}_j} y \in \Delta$.\\	
\noindent\textbf{Case 2:} $k_j = D_M-1$. $\mathbf{k} = (k_1,\dots,k_{j-1}, M, k_{j+1},\dots,k_d)$. There are $\mathscr{C}$, $\xi$ such that $y \in Y_{\mathscr{C}, \xi}$. Consider $y' = T^{D_M \mathbf{e}_j} y$. By Lemma \ref{Lemma 4.2}, we have $y' \in Y$, thus
$$T^{\mathbf{e}_j}(z) = T^{\mathbf{k}+\mathbf{e}_j} y = T^{\mathbf{k}+\mathbf{e}_j- D_M\mathbf{e}_j} (T^{D_M \mathbf{e}_j} y) = T^{\mathbf{k} +\mathbf{e}_j- D_M\mathbf{e}_j} y' \in T^{\mathbf{k}+\mathbf{e}_j - D_M\mathbf{e}_j}(Y)\subset\Delta,$$
where $\mathbf{k}+\mathbf{e}_j- D_M\mathbf{e}_j=(k_1,\dots,k_{j-1}, 0, k_{j+1},\dots,k_d)\in \Lambda_M$. \\
When $\Lambda_{M}=[-M,M]^d$, $D_{M}=2M+1$;\\
\noindent\textbf{Case 3:}$k_j<\frac{D_M-1}{2}$. Then $\mathbf{k} + \mathbf{e}_j \in \Lambda_{M}$ and hence $T^{\mathbf{e}_j}(z) = T^{\mathbf{k}+\mathbf{e}_j} y \in \Delta$.\\
\noindent\textbf{Case 4:} $k_j=\frac{D_M-1}{2}$. Then, by the same argument as in Case 2, we obtain $T^{\mathbf{e}_j}(z) \in \Delta$.\\
Thus $\Delta$ is $\mathcal{T}$-invariant. Since $\Delta \subset Z_{KM, 3\eta}$, Lemma \ref{lemma3.2} implies that for every $\nu \in \mathcal{M}(\Delta ,\mathcal{T})$,
$$D(\nu, \mu_0) \le 3\eta < \eta_0.$$
Hence $\Delta$ satisfies conclusion (\ref{pro 4.1(1)}) of Proposition \ref{proposition 4.1}.
\end{proof}
\subsection{Lower bound for entropy}\label{entropy lower 4.2}
In this subsection, we present the proof of the lower bound for the entropy.\\
For $n\in \mathbb{N}$, we define a cylinder of order $n$ in $\Gamma$ as
$$C^\Gamma_{\mathbf{p}} = \{\mathscr{C} \in \Gamma : x_{\mathbf{i}}(\mathscr{C})=p_{\mathbf{i}}~\text{for~each~}\mathbf{i} \in \Lambda_{n}\},$$  
 and a cylinder of order $n$ in $\Sigma$ as
 $$C^\Sigma_{\mathbf{w}} = \{\xi \in \Sigma : \xi_{\mathbf{i}}=w_{\mathbf{i}}~\text{for~each~}\mathbf{i} \in \Lambda_{n}\}.$$
Let $\mathcal{K}_n^\Gamma$ and $\mathcal{K}_n^\Sigma$ denote the collections of all such cylinders, respectively. Then 
\begin{align}\label{4.9}
|\mathcal{K}_{n}^\Gamma| = |\Gamma_M|^{V_{n}}~~\text{and}~~|\mathcal{K}_{n}^\Sigma|=V_M^{V_{n}}.
\end{align}
Denote
$$\mathcal{K}^\Gamma:=\bigcup_{n=1}^{\infty} \mathcal{K}_{n}^\Gamma~~\text{and}~~\mathcal{K}^\Sigma:=\bigcup_{n=1}^{\infty} \mathcal{K}_{n}^\Sigma.$$
For each cylinder  $C^\Gamma \in \mathcal{K}_n^\Gamma$ and $C^\Sigma \in \mathcal{K}_n^\Sigma$, define
$$Y_{C^\Gamma, C^\Sigma} = \bigcup_{\mathscr{C} \in C^\Gamma,\; \xi \in C^\Sigma} Y_{\mathscr{C}, \xi}.$$		
\begin{lemma}\label{lemma4.5}
Let $C_1^\Gamma, C_2^\Gamma \in \mathcal{K}_n^\Gamma$ be two distinct cylinder sets and let $C^\Sigma \in \mathcal{K}_n^\Sigma$ be a fixed cylinder set. If $y_1 \in Y_{C_1^\Gamma, C^\Sigma}$ and $y_2 \in Y_{C_2^\Gamma, C^\Sigma}$, then $y_1$ and $y_2$ are $(\Lambda_{n D_M + M}, \varepsilon)$-separated.
\end{lemma}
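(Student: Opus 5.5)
Since $C_1^\Gamma\neq C_2^\Gamma$ are cylinders of order $n$, there is an index $\mathbf{i}\in\Lambda_n$ with $p^{(1)}_{\mathbf{i}}\neq p^{(2)}_{\mathbf{i}}$. Write $x_1:=p^{(1)}_{\mathbf{i}}$ and $x_2:=p^{(2)}_{\mathbf{i}}$; these are distinct points of $\Gamma_M\subset\Gamma_M^*$. Choose $\mathscr{C}_1\in C_1^\Gamma$ and $\xi^1\in C^\Sigma$ with $y_1\in Y_{\mathscr{C}_1,\xi^1}$, together with the associated family $\{\Lambda^{\mathbf{i}}_1\}$. Choose $\mathscr{C}_2\in C_2^\Gamma$, $\xi^2\in C^\Sigma$ and $\{\Lambda^{\mathbf{i}}_2\}$ likewise for $y_2$. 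The key point of fixing a common $\Sigma$-cylinder is that $\xi^1_{\mathbf{i}}=\xi^2_{\mathbf{i}}=:w_{\mathbf{i}}$, because $\mathbf{i}\in\Lambda_n$. So both points shadow their respective orbit segments starting at the same time $\mathbf{i}D_M+w_{\mathbf{i}}$.

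Set $A:=\Lambda^{\mathbf{i}}_1\cap\Lambda^{\mathbf{i}}_2\subset\Lambda_M$. Then $|\Lambda_M\setminus A|\le 2\delta V_M<\delta_0V_M$ by \eqref{4.4}. Since $\Gamma_M^*$ is $(M,\delta_0,\gamma_0)$-separated, Definition \ref{definition 3.5} gives $\mathbf{k}\in A$ with $d(T^{\mathbf{k}}x_1,T^{\mathbf{k}}x_2)>\gamma_0$. For this $\mathbf{k}$, tracing property (2) applied to both points yields
$$d(T^{\mathbf{i}D_M+w_{\mathbf{i}}+\mathbf{k}}y_j,\,T^{\mathbf{k}}x_j)\le\varepsilon,\qquad j=1,2.$$
The triangle inequality then gives
$$d(T^{\mathbf{m}}y_1,T^{\mathbf{m}}y_2)\ge\gamma_0-2\varepsilon>\varepsilon,\qquad \mathbf{m}:=\mathbf{i}D_M+w_{\mathbf{i}}+\mathbf{k},$$
using $\varepsilon<\gamma_0/3$ from \eqref{4.3}.

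It remains to check that $\mathbf{m}\in\Lambda_{nD_M+M}$. We have $\mathbf{k}+w_{\mathbf{i}}\in\Lambda^{\mathbf{i}}_1+\xi^1_{\mathbf{i}}\subset\Lambda_M$. Take first $L=\mathbb{Z}_+^d$. Each coordinate of $\mathbf{i}D_M$ lies in $[0,nD_M]$, and each coordinate of $\mathbf{k}+w_{\mathbf{i}}$ lies in $[0,M]$, so $\mathbf{m}\in[0,nD_M+M]^d$. For $L=\mathbb{Z}^d$ the coordinates lie in $[-nD_M-M,\,nD_M+M]$, giving the same conclusion. Hence $d_{\Lambda_{nD_M+M}}(y_1,y_2)>\varepsilon$, which is the claimed separation.

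The only delicate step is obtaining a common shift $w_{\mathbf{i}}$ and a common good set $A$ on which the separation of $x_1,x_2$ is witnessed. The first comes from the shared cylinder $C^\Sigma$. The second comes from $2\delta<\delta_0$, which is exactly what makes the $(M,\delta_0,\gamma_0)$-separation survive the intersection of the two good sets.
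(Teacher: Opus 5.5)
Your proof is correct and follows the paper's argument essentially step by step: a coordinate $\mathbf{i}\in\Lambda_n$ where the two $\Gamma$-cylinders differ, the common shift $\xi_{\mathbf{i}}$ forced by the shared $\Sigma$-cylinder, the intersection $A$ of the two good sets with $|\Lambda_M\setminus A|\le 2\delta V_M<\delta_0 V_M$, the $(M,\delta_0,\gamma_0)$-separation of $\Gamma_M^*$, the triangle inequality with $\gamma_0>3\varepsilon$, and the check that the witnessing index lies in $\Lambda_{nD_M+M}$. You first fix the witnesses $\mathscr{C}_j,\xi^j$ for each $y_j$ and only then read off the common coordinate. This is slightly cleaner than the paper's phrasing, and your explicit membership check in both the $\mathbb{Z}_+^d$ and $\mathbb{Z}^d$ cases is accurate.
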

\begin{proof}
Since $C_1^\Gamma \neq C_2^\Gamma$, there exist $\mathbf{i}_0 \in \Lambda_{n}$, $\mathscr{C}\in C_1^\Gamma$ and $\mathscr{C}'\in C_2^\Gamma$ such that $x_{\mathbf{i}_0}^{(1)}(\mathscr{C}) \neq x_{\mathbf{i}_0}^{(2)}(\mathscr{C}')$. As $C^\Sigma$ is fixed, we can choose $\xi\in C^\Sigma$ with $\xi_{\mathbf{i}_0}^{(1)} = \xi_{\mathbf{i}_0}^{(2)}:=\xi_{\mathbf{i}_0}$. 
By the definition of $Y_{\mathscr{C},\xi}$ and $Y_{\mathscr{C}',\xi}$, there exist subsets $\Lambda_1^{\mathbf{i}_0}, \Lambda_2^{\mathbf{i}_0} \subset \Lambda_M$ such that $|\Lambda_M \setminus \Lambda_1^{\mathbf{i}_0}| \le \delta V_M$, $|\Lambda_M \setminus \Lambda_2^{\mathbf{i}_0}| \le \delta V_M$. For every $\mathbf{k} \in \Lambda_1^{\mathbf{i}_0}$ and $\mathbf{k} \in \Lambda_2^{\mathbf{i}_0}$, we have
$$d\bigl(T^{\mathbf{i}_0 D_M + \xi_{\mathbf{i}_0} + \mathbf{k}}(y_1),\; T^{\mathbf{k}} x_{\mathbf{i}_0}^{(1)}(\mathscr{C})\bigr) \le \varepsilon,~~d\bigl(T^{\mathbf{i}_0 D_M + \xi_{\mathbf{i}_0} + \mathbf{k}}(y_2),\; T^{\mathbf{k}} x_{\mathbf{i}_0}^{(2)}(\mathscr{C}')\bigr) \le \varepsilon.$$
Let $A = \Lambda^{\mathbf{i}_0}_1 \cap \Lambda^{\mathbf{i}_0}_2$. By \eqref{4.4}, we have 
$$|A| \ge (1-2\delta) V_M > (1-\delta_0) V_M.$$
Since $x_{\mathbf{i}_0}^{(1)}(\mathscr{C})$ and $x_{\mathbf{i}_0}^{(2)}(\mathscr{C}')$ are distinct elements of $\Gamma$ and $\Gamma_M$ is an $(M, \delta_0, \gamma_0)$-separated, there exists $\mathbf{k}_0 \in A$ such that
$$d\bigl(T^{\mathbf{k}_0}( x_{\mathbf{i}_0}^{(1)}(\mathscr{C})),\; T^{\mathbf{k}_0} (x_{\mathbf{i}_0}^{(2)}(\mathscr{C}'))\bigr) > \gamma_0 > 3\varepsilon.$$
Thus, we have
\[\begin{aligned}
d\bigl(T^{\mathbf{i}_0 D_M +\xi_{\mathbf{i}_0}+ \mathbf{k}_0} (y_1),\; T^{\mathbf{i}_0 D_M + \xi_{\mathbf{i}_0}+ \mathbf{k}_0} (y_2)\bigr) 
&\ge d\bigl(T^{\mathbf{k}_0}( x_{\mathbf{i}_0}^{(1)}(\mathscr{C} )),\; T^{\mathbf{k}_0} (x_{\mathbf{i}_0}^{(2)}\bigr(\mathscr{C}'))) \\
&\quad - d\bigl(T^{\mathbf{i}_0 D_M + \xi_{\mathbf{i}_0} + \mathbf{k}_0} (y_1),\; T^{\mathbf{k}_0} (x_{\mathbf{i}_0}^{(1)}(\mathscr{C}))\bigr) \\
&\quad - d\bigl(T^{\mathbf{i}_0 D_M +\xi_{\mathbf{i}_0} + \mathbf{k}_0} (y_2),\; T^{\mathbf{k}_0}( x_{\mathbf{i}_0}^{(2)}(\mathscr{C}'))\bigr) \\
&> 3\varepsilon - \varepsilon - \varepsilon = \varepsilon.
\end{aligned} \]
Let $\mathbf{t} = \mathbf{i}_0 D_M + \xi_{\mathbf{i}_0} + \mathbf{k}_0$. Since $\mathbf{i}_0 \in \Lambda_n$ and $\xi_{\mathbf{i}_0} +\mathbf{k}_0 \in \Lambda_M$, we have $\mathbf{t} \in \Lambda_{n D_M + M}$. Hence $y_1$ and $y_2$ are $(\Lambda_{n D_M + M}, \varepsilon)$-separated.
\end{proof}
We now prove the lower bound for the entropy.
\begin{proof}
By the approximate $L$-product property, for each cylinder $C^\Gamma \in \mathcal{K}_n^\Gamma$, there exists some $C^\Sigma \in\mathcal{K}_n^\Sigma$ such that $Y_{C^\Gamma, C^\Sigma} \neq \emptyset$. Due to (\ref{4.7}) and (\ref{4.9}), there exists $C^\Sigma_* \in \mathcal{K}_n^\Sigma$ such that
		$$\left|\{ C^\Gamma \in \mathcal{K}_n^\Gamma : Y_{C^\Gamma, C^\Sigma_*} \neq \emptyset \}\right| \geq \frac{|\mathcal{K}_n^\Gamma|}{|\mathcal{K}_n^\Sigma|} = \frac{|\Gamma_M|^{V_n}}{V_M^{V_n}}\geq \frac{e^{V_n V_M h_1}}{V_M^{V_n}}.$$
For each such $C^\Gamma$, choose a point $y(C^\Gamma) \in Y_{C^\Gamma, C^\Sigma_*}$, thus there exists a set $E_n \subset Y$ satisfying $|E_n| \geq \frac{e^{V_n V_M h_1}}{V_M^{V_n}}$. By Lemma \ref{lemma4.5}, $E_n$ is $(\Lambda_{n D_M + M}, \varepsilon)$-separated and hence
$$s(\Delta , \Lambda_{n D_M + M}, \varepsilon) \geq |E_n| \geq \frac{e^{V_n V_M h_1}}{V_M^{V_n}}.$$
Let $N_n = n D_M + M$. By (\ref{2.1}), we have $V_{N_n}=V_{n * M}=V_nV_M$. Consequently,
$$\frac{\ln s(\Delta , \Lambda_{N_n}, \varepsilon)}{V_{N_n}} \geq \frac{V_n (V_M h_1 - \ln V_M)}{V_n V_M} = h_1 - \frac{\ln V_M}{V_M}.$$
Therefore,
$$h(\Delta,\mathcal{T}, \varepsilon) = \limsup_{n \to \infty} \frac{\ln s(\Delta , \Lambda_{N_n}, \varepsilon)}{V_{N_n}} \geq h_1 - \frac{\ln V_M}{V_M}.$$
By \eqref{4.6} $\frac{\ln V_M}{V_M} < \beta$  and $h_1 = h_0 + 10\beta$, we have
$$h(\Delta , \mathcal{T}) \geq h( \Delta, \mathcal{T}, \varepsilon) > h_1 - \beta = h_0 + 9\beta > h_0.$$
\end{proof}
\subsection{Upper bound for entropy}\label{upper bound 4.3}
In this subsection, we prove the upper bound for the entropy.
\begin{lemma}\label{lemma 4.6}
For $n\in \mathbb{N}$, let $C^\Gamma \in \mathcal{K}_n^\Gamma$ and  $C^\Sigma \in \mathcal{K}_n^\Sigma$ be any cylinders. Then
$$s(Y_{C^\Gamma, C^\Sigma}, \Lambda_{n D_M}, 2\varepsilon) \leq \left( Q(M, \delta) r(\varepsilon)^{\delta V_M} \right)^{V_n}.$$
\end{lemma}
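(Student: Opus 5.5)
The plan is to bound the separated set by an injective coding into a set of size at most $\left(Q(M,\delta)\,r(\varepsilon)^{\delta V_M}\right)^{V_n}$. Since $C^\Gamma$ and $C^\Sigma$ are fixed, on each block $\Lambda_n$ the target points $p_{\mathbf{i}}=x_{\mathbf{i}}(\mathscr{C})$ and the shifts $w_{\mathbf{i}}=\xi_{\mathbf{i}}$ are determined for $\mathbf{i}\in\Lambda_n$. The only freedom left in $Y_{C^\Gamma,C^\Sigma}$ along $\Lambda_n$ is therefore twofold: the choice of the good sets $\Lambda^{\mathbf{i}}\in\Upsilon$, and the behaviour of $y$ at the uncovered times.

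First I will decompose the time set. For $L=\mathbb{Z}_+^d$, the translates $\mathbf{i}D_M+\Lambda_M$, $\mathbf{i}\in\Lambda_n$, tile a cube containing $\Lambda_{nD_M}$, since $nD_M\le (n+1)D_M-1$. For each block $\mathbf{i}$ I will work in the local coordinates $\mathbf{t}=\mathbf{i}D_M+\mathbf{s}$ with $\mathbf{s}\in\Lambda_M$. Write $G^{\mathbf{i}}:=\Lambda^{\mathbf{i}}+w_{\mathbf{i}}\subset\Lambda_M$, which is the "good" set translated. Then $|\Lambda_M\setminus G^{\mathbf{i}}|\le\delta V_M$, and for $\mathbf{s}\in G^{\mathbf{i}}$ the point $T^{\mathbf{i}D_M+\mathbf{s}}y$ is $\varepsilon$-close to the fixed point $T^{\mathbf{s}-w_{\mathbf{i}}}p_{\mathbf{i}}$. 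The case $L=\mathbb{Z}^d$ is handled the same way after recentring.

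Next I will define the code. Fix a $(1,\varepsilon)$-spanning set $F$ with $|F|=r(\varepsilon)$, which is an $\varepsilon$-net of $X$. To each $y\in Y_{C^\Gamma,C^\Sigma}$ I associate, for every $\mathbf{i}\in\Lambda_n$, two pieces of data: the set $\Lambda^{\mathbf{i}}\in\Upsilon$, which has $Q(M,\delta)$ choices, and a map $\phi_{\mathbf{i}}:\Lambda_M\setminus G^{\mathbf{i}}\to F$ with $d(T^{\mathbf{i}D_M+\mathbf{s}}y,\phi_{\mathbf{i}}(\mathbf{s}))<\varepsilon$. There are at most $r(\varepsilon)^{\delta V_M}$ such maps once $\Lambda^{\mathbf{i}}$ is fixed; I pad the domain to exactly $\lfloor\delta V_M\rfloor$ points if needed. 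This gives at most $\left(Q(M,\delta)r(\varepsilon)^{\delta V_M}\right)^{V_n}$ codes.

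The key step, which I expect to be the main obstacle, is to show that two points with the same code are within $2\varepsilon$ in $d_{\Lambda_{nD_M}}$. Take such points $y,y'$ and a time $\mathbf{t}=\mathbf{i}D_M+\mathbf{s}$. If $\mathbf{s}\in G^{\mathbf{i}}$, both orbit points are within $\varepsilon$ of the same point $T^{\mathbf{s}-w_{\mathbf{i}}}p_{\mathbf{i}}$. If not, both are within $\varepsilon$ of $\phi_{\mathbf{i}}(\mathbf{s})$. In either case the distance is at most $2\varepsilon$, so a $(\Lambda_{nD_M},2\varepsilon)$-separated set, which needs distances strictly above $2\varepsilon$, can meet each code class in at most one point. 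The count above then gives the inequality.

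The delicate points are bookkeeping. First, the $\leq\varepsilon$ in the tracing condition together with $<\varepsilon$ from the net yields a distance $\le2\varepsilon$, which is compatible with the strict inequality in the definition of separation. Second, I must check that times in $\Lambda_{nD_M}$ near the upper faces still lie in blocks indexed by $\Lambda_n$; this holds because the cube has side $nD_M+1\le(n+1)D_M$. Third, the good set depends on $y$, which is exactly why $\Lambda^{\mathbf{i}}$ is recorded in the code. For $L=\mathbb{Z}^d$ I will verify the analogous containment with the symmetric index set, possibly needing an extra boundary layer, which would only change lower-order terms if it caused trouble.
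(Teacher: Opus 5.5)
Your proposal is correct and is essentially the paper's argument. The paper first partitions $Y_{C^\Gamma,C^\Sigma}$ by the family $\mathscr{A}=(A_{\mathbf{i}})_{\mathbf{i}\in\Lambda_n}\in\Upsilon^{\Lambda_n}$ of good sets, then injects each $(\Lambda_{nD_M},2\varepsilon)$-separated piece into a product of singletons $\{T^{\mathbf{k}}p_{\mathbf{i}}\}$ and copies of a $(1,\varepsilon)$-spanning set; this is your single code split into two stages, and your use of the translated good set $G^{\mathbf{i}}=\Lambda^{\mathbf{i}}+w_{\mathbf{i}}$ for the uncovered times is the correct bookkeeping (the paper writes $\Lambda_M\setminus A_{\mathbf{i}}$ there), while for $L=\mathbb{Z}^d$ no extra boundary layer is needed because the blocks $\mathbf{i}D_M+\Lambda_M$, $\mathbf{i}\in\Lambda_n$, tile $\Lambda_{nD_M+M}\supset\Lambda_{nD_M}$.
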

\begin{proof}
 Let $S_1$ be a fixed $(1, \varepsilon)$-spanning set of $X$ with the minimal cardinality $r(\varepsilon)$. For a family $\mathscr{A} = (A_{\mathbf{i}})_{\mathbf{i} \in \Lambda_n}$ with each $A_{\mathbf{i}} \in \Upsilon$, define
$$Y_{C^\Gamma, C^\Sigma}(\mathscr{A}) = \left\{ y \in Y_{C^\Gamma, C^\Sigma} : \ d\bigl( T^{\mathbf{i} D_M + \xi_{\mathbf{i}} + \mathbf{k}} y,\; T^{\mathbf{k}} p_{\mathbf{i}} \bigr) \le \varepsilon~\text{for~~every }\mathbf{i} \in \Lambda_n, \mathbf{k} \in A_{\mathbf{i}} \right\}.$$
For each $\mathbf{j} \in \Lambda_{n D_M}$, there is a unique $\mathbf{i} \in\Lambda_{n}$ such that $\mathbf{j} \in \mathbf{i} D_M + \Lambda_M$. Let $\mathbf{j} = \mathbf{i} D_M + \mathbf{t}$ with $\mathbf{t} \in \Lambda_M$. Define 
\begin{align*}
\Omega_{\mathbf{j}}(\mathscr{A}):=
\begin{cases} 
\{ T^{\mathbf{k}} p_{\mathbf{i}}\}, & \text{if}~\mathbf{t} = \xi_{\mathbf{i}} + \mathbf{k}~\text{and}~\mathbf{k} \in A_{\mathbf{i}}; \\
S_1, & \text{if}~\mathbf{t}\in\Lambda_M\setminus A_{\mathbf{i}} .
\end{cases}   
\end{align*}
and denote
$$\Omega(\mathscr{A}) = \prod_{\mathbf{j} \in \Lambda_{n D_M}} \Omega_{\mathbf{j}}(\mathscr{A}).$$
Let $S_{\mathscr{A}}$ be an $(\Lambda_{n D_M}, 2\varepsilon)$-separated set in $Y_{C^\Gamma, C^\Sigma}(\mathscr{A})$. For each $y \in S_{\mathscr{A}}$ and each $\mathbf{j} \in \Lambda_{nD_M}$, we choose $\pi_{\mathbf{j}}(y) \in \Omega_{\mathbf{j}}(\mathscr{A})$ such that $d(T^{\mathbf{j}}y, \pi_{\mathbf{j}}(y)) \le \varepsilon$. This is always possible:\\
If $\mathbf{t} = \xi_{\mathbf{i}}+\mathbf{k}$ with $\mathbf{k}\in A_{\mathbf{i}}$, then by definition of $Y_{C^\Gamma, C^\Sigma}(\mathscr{A})$ we have $d(T^{\mathbf{j}}y, T^{\mathbf{k}}p_{\mathbf{i}})\le\varepsilon$, so we take $\pi_{\mathbf{j}}(y)=T^{\mathbf{k}}p_{\mathbf{i}}$; Otherwise, as $S_1$ is a $(1,\varepsilon)$-spanning set, there exists $s\in S_1$ with $d(T^{\mathbf{j}}y,s)\le\varepsilon$, and we set $\pi_{\mathbf{j}}(y)=s$.\\
Let $$\pi(y):=\{\pi_{\mathbf{j}}(y)\}_{\mathbf{j} \in \Lambda_{n D_M}}.$$
The fact that $S_{\mathscr{A}}$ is $(\Lambda_{n D_M}, 2\varepsilon)$-separated implies that $\pi: S_{\mathscr{A}} \to \Omega(\mathscr{A})$ is an injection. It follows that
$$|S_{\mathscr{A}}| \leq |\Omega(\mathscr{A})| = \prod_{\mathbf{i} \in \Lambda_n } \left( 1^{|A_{\mathbf{i}}|} r(\varepsilon)^{V_M - |A_{\mathbf{i}}|} \right) \leq \prod_{\mathbf{i} \in \Lambda_n} r(\varepsilon)^{\delta V_M} = r(\varepsilon)^{\delta V_M V_n}.$$
By the definition of $Y$, each $y \in Y_{C^\Gamma, C^\Sigma}$ must belong to some $Y_{C^\Gamma, C^\Sigma}(\mathscr{A})$, hence
$$Y_{C^\Gamma, C^\Sigma} = \bigcup_{\mathscr{A} \in \Upsilon^{\Lambda_n}} Y_{C^\Gamma, C^\Sigma}(\mathscr{A}).$$
If $E$ is any $(\Lambda_{n D_M}, 2\varepsilon)$-separated subset of $Y_{C^\Gamma, C^\Sigma}$, then $E = \bigcup_{\mathscr{A}} (E\cap Y_{C^\Gamma, C^\Sigma}(\mathscr{A}))$. For each $\mathscr{A}$, the set $E\cap Y_{C^\Gamma, C^\Sigma}(\mathscr{A})$ is also $(\Lambda_{n D_M}, 2\varepsilon)$-separated, so its cardinality is at most $|S_{\mathscr{A}}|$. Hence
$$s(Y_{C^\Gamma, C^\Sigma}, \Lambda_{n D_M}, 2\varepsilon) \leq \sum_{\mathscr{A} \in \Upsilon^{\Lambda_n}} |S_{\mathscr{A}}| \leq |\Upsilon|^{V_n} r(\varepsilon)^{\delta V_M V_n} = \left( Q(M, \delta) r(\varepsilon)^{\delta V_M} \right)^{V_n}.$$
\end{proof}
\begin{lemma}\label{lemma4.7}
For $n\in \mathbb{N}$, we have
 $$ s(\Delta , \Lambda_{n D_M}, 2\varepsilon) < V_M  \left( e^{V_M (h_1 + \beta)} V_M  Q(M, \delta) r(\varepsilon)^{\delta V_M} \right)^{V_{n+1}}. $$
\end{lemma}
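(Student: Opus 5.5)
The plan is to reduce the count for $\Delta$ to the count for $Y$ and then to the cylinder bound of Lemma \ref{lemma 4.6}. Since $\Delta=\bigcup_{\mathbf{k}\in\Lambda_M}T^{\mathbf{k}}Y$ is a union of $V_M$ translates, we have
$$s(\Delta,\Lambda_{nD_M},2\varepsilon)\le \sum_{\mathbf{k}\in\Lambda_M} s(T^{\mathbf{k}}Y,\Lambda_{nD_M},2\varepsilon).$$
This accounts for the leading factor $V_M$.

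For a single translate, a $(\Lambda_{nD_M},2\varepsilon)$-separated set in $T^{\mathbf{k}}Y$ pulls back to a set in $Y$ that is separated along the shifted window $\mathbf{k}+\Lambda_{nD_M}$. Because $\mathbf{k}\in\Lambda_M$, this window lies inside $\Lambda_{(n+1)D_M}$, which is the disjoint union of the blocks $\mathbf{i}D_M+\Lambda_M$ for $\mathbf{i}\in\Lambda_{n+1}$ (in the $\mathbb{Z}^d$ case I would use the analogous symmetric block decomposition). So the pulled-back set is $(\Lambda_{(n+1)D_M},2\varepsilon)$-separated in $Y$. This is what raises the exponent from $V_n$ to $V_{n+1}$.

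Next I would partition $Y$ by cylinders of order $n+1$. That is,
$$Y=\bigcup_{C^\Gamma\in\mathcal{K}^\Gamma_{n+1},\,C^\Sigma\in\mathcal{K}^\Sigma_{n+1}}Y_{C^\Gamma,C^\Sigma}.$$
A separated set splits into separated pieces inside each $Y_{C^\Gamma,C^\Sigma}$. Lemma \ref{lemma 4.6}, applied with $n+1$ in place of $n$, bounds each piece by $\bigl(Q(M,\delta)r(\varepsilon)^{\delta V_M}\bigr)^{V_{n+1}}$. The number of cylinder pairs is $|\Gamma_M|^{V_{n+1}}V_M^{V_{n+1}}$ by \eqref{4.9}, and \eqref{4.7} gives $|\Gamma_M|<e^{V_M(h_1+\beta)}$. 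Multiplying these factors yields
$$s(Y,\Lambda_{(n+1)D_M},2\varepsilon)<\bigl(e^{V_M(h_1+\beta)}V_MQ(M,\delta)r(\varepsilon)^{\delta V_M}\bigr)^{V_{n+1}}.$$
Multiplying by $V_M$ then gives the claim. The strict inequality comes from \eqref{4.7}.

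The main obstacle is the window-shifting step. I need to check that the shifted window really sits inside a union of full $M$-blocks indexed by $\Lambda_{n+1}$, and that Lemma \ref{lemma 4.6} is being applied on exactly the index set $\Lambda_{(n+1)D_M}$, or on a set contained in it. The latter suffices, because separation on a subwindow implies separation on the larger window. I would first verify the $\mathbb{Z}_+^d$ case, where $\mathbf{k}+\Lambda_{nD_M}\subset[0,M+nD_M]^d\subset\Lambda_{(n+1)D_M}$, and then check the symmetric case in the same way with $\Lambda_{n+1}=[-(n+1),n+1]^d$.
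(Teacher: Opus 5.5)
Your proposal is correct and matches the paper's proof step for step. Like the paper, you split $\Delta$ into the $V_M$ translates $T^{\mathbf{k}}Y$, pull separated sets back along $\mathbf{k}+\Lambda_{nD_M}\subset\Lambda_{(n+1)D_M}$, and bound $s(Y,\Lambda_{(n+1)D_M},2\varepsilon)$ by summing Lemma~\ref{lemma 4.6} at level $n+1$ over the $|\Gamma_M|^{V_{n+1}}V_M^{V_{n+1}}$ cylinder pairs, with strictness from \eqref{4.7}. One harmless imprecision: $\Lambda_{(n+1)D_M}$ is not equal to the union of the blocks $\mathbf{i}D_M+\Lambda_M$, $\mathbf{i}\in\Lambda_{n+1}$ (that union is $\Lambda_{(n+1)D_M+M}$); it is only contained in it, and containment is all Lemma~\ref{lemma 4.6} needs.
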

\begin{proof}
From \eqref{4.7} and Lemma \ref{lemma 4.6}, we obtain
\begin{align*}
  s(Y, \Lambda_{n D_M}, 2\varepsilon)&\le \sum_{C^\Gamma \in \mathcal{K}_n^\Gamma,C^\Sigma \in \mathcal{K}_n^\Sigma}s(Y_{C^\Gamma, C^\Sigma},  \Lambda_{n D_M}, 2\varepsilon)\\&\le |\Gamma_M|^{V_{n}}V_M^{V_{n}}\left( Q(M, \delta) r(\varepsilon)^{\delta V_M} \right)^{V_n}\\&<\Bigl(e^{V_M (h_1 + \beta)}V_MQ(M, \delta) r(\varepsilon)^{\delta V_M}\Bigr)^{V_n}. 
\end{align*}
Note that if $S$ is an $(\Lambda_{n D_M},2\varepsilon)$-separated subset of $T^{\mathbf{k}}(Y)$, then $T^{-\mathbf{k}}(S)$ contains an $(\Lambda_{n D_M}+\mathbf{k},2\varepsilon)$-separated subset of $Y$. Since $\Lambda_{n D_M}+\mathbf{k} \subset \Lambda_{(n+1)D_M}$ for any $\mathbf{k}\in\Lambda_M$, we have
$$s(T^{\mathbf{k}}(Y),\Lambda_{n D_M},2\varepsilon)\le s(Y,\Lambda_{n D_M}+\mathbf{k},2\varepsilon)\le s(Y,\Lambda_{(n+1)D_{M}},2\varepsilon).$$
It follows from (\ref{4.8}) that
\begin{align*}
  s(\Delta , \Lambda_{n D_M}, 2\varepsilon)&\le \sum_{\mathbf{k} \in \Lambda_{M}}s(T^{\mathbf{k}}Y, \Lambda_{n D_M}, 2\varepsilon)\\&\le V_{M}s(Y,\Lambda_{(n+1)D_{M}},2\varepsilon)\\&<V_{M}\Bigl(e^{V_M (h_1 + \beta)}V_MQ(M, \delta) r(\varepsilon)^{\delta V_M}\Bigr)^{V_{n+1}}. 
\end{align*} 
\end{proof}      
We now establish the upper bound for the entropy.		
\begin{proof}
For any $t \in \mathbb{N}$, choose $n=n(t)\in \mathbb{N}$ such that $(n-1)D_M \le t < nD_M$. Then $\Lambda_t \subset \Lambda_{n D_M}$ and $V_t \ge V_{(n-1)D_M}$. By Lemma \ref{lemma4.7}, we have
\begin{align*}
 h(\Delta , \mathcal{T}, 2\varepsilon)& = \limsup_{t \to \infty} \frac{\ln s(\Delta , t, 2\varepsilon)}{V_t}\\& \le\limsup_{t \to \infty} \frac{\ln s(\Delta , \Lambda_{n D_M}, 2\varepsilon)}{V_{(n-1)D_M}}\\&\le  \frac{\ln V_M}{V_{(n-1)D_M}} + \frac{V_{n+1}}{V_{(n-1)D_M}} \left( V_M (h_1 + \beta) + \ln V_M + \ln Q(M, \delta) + \delta V_M \ln r(\varepsilon) \right).
\end{align*}
As $n \to \infty$, $\frac{\ln V_M}{V_{(n-1)D_M}}\to 0$ and $\frac{V_{n+1}}{V_{(n-1)D_M}} \to \frac{1}{V_M}$. Hence    
\begin{align*}
 h(\Delta ,\mathcal{T}, 2\varepsilon) \le \frac{1}{V_M} \left( V_M (h_1 + \beta) + \ln V_M + \ln Q(M, \delta) + \delta V_M \ln r(\varepsilon) \right).   
\end{align*}
By (\ref{4.4})-(\ref{4.6}), we have
	\begin{itemize}
		\item \(\frac{\ln V_M}{V_M} < \beta\);
		\item \(\frac{\ln Q(M, \delta)}{V_M} \le -\delta \ln \delta - (1-\delta)\ln(1-\delta) < \beta\);
		\item \(\delta \ln r(\varepsilon) < \beta\).
\end{itemize}
Therefore,
$$h(\Delta ,\mathcal{T}, 2\varepsilon) <  h_1 + 4\beta< h_0 + \beta_0.$$
\end{proof}	
When $h_{\mu_0}(\mathcal{T}) = 0$, we have the following result.
\begin{proposition}\label{pro4.8}
Let $(X, d,\mathcal{T})$ be a system with the approximate $L$-product property. Assume that $\mu_0 \in \mathcal{M}_e(X, \mathcal{T})$, $h_{\mu_0}(\mathcal{T}) = 0$ and $\eta_0 > 0$. Then there exists a compact $\mathcal{T}$-invariant subset $\Delta = \Delta (\mu_0, \eta_0)$ such that $D(\nu, \mu_0) < \eta_0$ for every $\nu \in \mathcal{M}(\Delta , \mathcal{T})$.
\end{proposition}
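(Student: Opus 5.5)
We prove Proposition~\ref{pro4.8} by running the construction of Subsection~\ref{con4.1} without the entropy bookkeeping: only part (1) is needed, and nothing there actually used entropy except to produce many separated orbit segments. Here a single point suffices.

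Set $\eta := \eta_0/4$ and choose $K\in\mathbb{N}$ with $K\eta > 2dD^*$, as in \eqref{4.1}. By \eqref{3.1}, pick $\varepsilon>0$ with $\operatorname{Var}(\varepsilon)<\eta/4$. Then choose $\delta\in(0,\tfrac12)$ with $\delta<\frac{1}{2K}$, so that $\operatorname{Var}(\varepsilon)+2\delta D^*<\eta$, which is condition \eqref{3.4}.

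Since $\mu_0$ is ergodic, it has a generic point $x_0$, so $\mathcal{E}_n(x_0)\to\mu_0$. Fix $M>N(\varepsilon,\delta)$ large enough that $x_0\in X_{M,B(\mu_0,\eta)}$. Take $\Gamma_M:=\{x_0\}$, so $\Gamma=(\Gamma_M)^L$ is the single constant configuration $\mathscr{C}_0\equiv x_0$, while $\Sigma=(\Lambda_M)^L$ is as before.

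Define $Y_{\mathscr{C}_0,\xi}$ for $\xi\in\Sigma$ and $Y:=\bigcup_{\xi\in\Sigma}Y_{\mathscr{C}_0,\xi}$ exactly as in Subsection~\ref{con4.1}. The approximate $L$-product property with $n=M\ge N(\varepsilon,\delta)$ applied to the constant family $\{x_0\}_{\mathbf{i}\in L}$ guarantees $Y\neq\emptyset$. By Lemma~\ref{lemma3.3}, $Y\subset Z_{KM,3\eta}$.

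The remaining steps transfer verbatim, since none of them used $|\Gamma_M|>1$ or any entropy estimate:
\begin{itemize}
\item Lemma~\ref{Lemma 4.2} holds because the shift fixes $\mathscr{C}_0$.
\item Lemma~\ref{lemma 4.3} gives closedness of $Y$.
\item The proposition following \eqref{4.8} shows that $\Delta:=\bigcup_{\mathbf{k}\in\Lambda_M}T^{\mathbf{k}}Y$ is a compact $\mathcal{T}$-invariant subset of $Z_{KM,3\eta}$.
\end{itemize}
Finally, Lemma~\ref{lemma3.2} yields $D(\nu,\mu_0)\le 3\eta<\eta_0$ for every $\nu\in\mathcal{M}(\Delta,\mathcal{T})$.

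The only point needing care, and what we expect to be the main obstacle, is nonemptiness of $\Delta$ together with the correct choice of $M$. Both $M>N(\varepsilon,\delta)$ and $x_0\in X_{M,B(\mu_0,\eta)}$ must hold simultaneously. This is possible because the first is a lower bound on $M$ and the second holds for all sufficiently large $M$ by genericity. Note also that $\varepsilon$ and $\delta$ are fixed before $M$, so there is no circularity.
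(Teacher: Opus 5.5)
The paper gives no separate proof of Proposition~\ref{pro4.8} and implicitly relies on the construction of Subsection~\ref{con4.1}. Your argument is exactly that construction, with $\Gamma_M$ taken to be a single point $x_0\in X_{M,B(\mu_0,\eta)}$ and only part (1) kept, together with the needed ordering of choices ($\varepsilon,\delta$ before $M$) and the use of the approximate $L$-product property on the constant family to ensure $\Delta\neq\emptyset$; it is correct and is essentially the intended proof.
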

\subsection{Proof of Theorem \ref{theorem 1.3}}
In this subsection, we present the proof of Theorem \ref{theorem 1.3}.
\begin{proof} 
Take $\mu \in \mathcal{M}(X, \mathcal{T})$ and let $U$ be a neighborhood of $\mu$. Choose $h \in (0, h_\mu(\mathcal{T}))$ and $\varepsilon, \beta > 0$. There exists $\eta_0 > 0$ such that $B(\mu, 2\eta_0) \subset U$. Proposition \ref{proposition2.15}guarantees the existence of an ergodic measure $\mu_0 \in \mathcal{M}_{e}(X, \mathcal{T})$ with
$$D(\mu, \mu_0) < \eta_0~~\text{and}~~h_{\mu_0}(\mathcal{T}) > h.$$
Applying Proposition \ref{proposition 4.1}, we obtain $\gamma \in (0, \varepsilon)$ and a compact $\mathcal{T}$-invariant set $\Delta $ such that $D(\nu, \mu_0) < \eta_0$ for every $\nu \in \mathcal{M}(\Delta, \mathcal{T})$, $h(\Delta , \mathcal{T}) > h$ and $h(\Delta , \mathcal{T}, \gamma) < h + \beta$. Consequently,
$$\mathcal{M}(\Delta , \mathcal{T}) \subset B(\mu, 2\eta_0) \subset U.$$
If $h_\mu(\mathcal{T}) = 0$, the same conclusion follows by an analogous argument using Proposition \ref{pro4.8}.
\end{proof}	
\subsection{Minimal systems}\label{subsection 4.5}
In this subsection, we prove Corollary \ref{cor1.6}. The proof is divided into two parts, presented as Corollaries \ref{cor 4.9} and \ref{cor 4.10}, both of which are direct consequences of Proposition \ref{proposition 4.1}.
\begin{corollary}\label{cor 4.9}
Let $(X, d, \mathcal{T})$ be a topological dynamical system with the approximate $L$-product property and positive topological entropy. Then $(X, d, \mathcal{T})$ is not minimal.
\end{corollary}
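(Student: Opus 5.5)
The plan is to exhibit, under the assumption $h(\mathcal{T})>0$, a nonempty compact $\mathcal{T}$-invariant set $\Delta$ that is a proper subset of $X$; this contradicts minimality directly. Proposition~\ref{proposition 4.1} produces compact invariant sets, so the work lies in guaranteeing that the set it produces is nonempty and not all of $X$.

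\textbf{Step 1: choose the measure.} Since $h(\mathcal{T})>0$, Proposition~\ref{proposition 2.6} gives an ergodic $\mu_0\in\mathcal{M}_e(X,\mathcal{T})$ with $h_{\mu_0}(\mathcal{T})>0$. Fix some $h_0\in(0,h_{\mu_0}(\mathcal{T}))$ together with parameters $\beta_0,\varepsilon_0>0$, to be specified below.

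\textbf{Step 2: apply Proposition~\ref{proposition 4.1}.} This yields $\gamma\in(0,\varepsilon_0)$ and a compact $\mathcal{T}$-invariant set $\Delta$ such that $h(\Delta,\mathcal{T})>h_0$ and $h(\Delta,\mathcal{T},\gamma)<h_0+\beta_0$. Since $h(\Delta,\mathcal{T})>h_0>0$, the set $\Delta$ is nonempty.

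\textbf{Step 3: show $\Delta\neq X$.} This is the main step. My first attempt is an entropy comparison. Choose $\beta_0$ so small that $h_0+\beta_0<h(\mathcal{T})$; this is possible because $h_0<h_{\mu_0}(\mathcal{T})\le h(\mathcal{T})$. The obstacle is that $h(X,\mathcal{T},\gamma)$ might be smaller than $h(\mathcal{T})$ when $\gamma$ is small only in the sense of being below $\varepsilon_0$, since the scale entropy increases to $h(\mathcal{T})$ only as the scale shrinks. In that case the inequality $h(\Delta,\mathcal{T},\gamma)<h_0+\beta_0$ would not rule out $\Delta=X$.

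To fix this, I would use \eqref{2.2} to choose $\varepsilon_0$ with $h(X,\mathcal{T},\varepsilon)>h_0+\beta_0$ for all $\varepsilon\le\varepsilon_0$. Such an $\varepsilon_0$ exists because $h(X,\mathcal{T},\varepsilon)$ is monotone in $\varepsilon$ with supremum $h(\mathcal{T})>h_0+\beta_0$. Then $\gamma<\varepsilon_0$ gives
\[
h(X,\mathcal{T},\gamma)\ge h(X,\mathcal{T},\varepsilon_0)>h_0+\beta_0>h(\Delta,\mathcal{T},\gamma),
\]
which forces $\Delta\neq X$.

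\textbf{Fallback if the scale argument fails.} If the monotonicity or scale bookkeeping turns out to be delicate, I would use the measure statement of Proposition~\ref{proposition 4.1}(1) instead. Take an ergodic $\mu_1\neq\mu_0$, for example one of different entropy obtained via Proposition~\ref{proposition2.15} or from a measure supported away from $\mu_0$. Choose $\eta_0<D(\mu_0,\mu_1)$. Since $\mathcal{M}(\Delta,\mathcal{T})\subset B(\mu_0,\eta_0)$, the measure $\mu_1$ is not supported on $\Delta$, so again $\Delta\neq X$.

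With $\Delta$ nonempty, proper, compact, and invariant, $X$ is not minimal, which proves the corollary.
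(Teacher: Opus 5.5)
Your main argument (Steps 1--3) is correct and is essentially the paper's proof. The paper first fixes $\varepsilon_0$ with $h(X,\mathcal{T},\varepsilon_0)>0$ and then picks $h_0,\beta_0$ with $h_0+\beta_0<h(X,\mathcal{T},\varepsilon_0)$, which only reorders your parameter choices, and it concludes exactly as you do from $h(\Delta,\mathcal{T},\gamma)<h_0+\beta_0<h(X,\mathcal{T},\varepsilon_0)\le h(X,\mathcal{T},\gamma)$. Drop the fallback: it is not needed, and as written it is unjustified, since nothing you cite produces an ergodic $\mu_1\neq\mu_0$ (a priori the system might be uniquely ergodic).
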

\begin{proof}
Assume that $h(\mathcal{T}) > 0$. By \eqref{2.2}, we can choose $\varepsilon_{0} > 0$ such that $h(X, \mathcal{T}, \varepsilon_{0}) > 0$. The variational principle then yields an ergodic measure $\mu_{0} \in \mathcal{M}_{e}(X, \mathcal{T})$ with $h_{\mu_{0}}(\mathcal{T}) > 0$. Choose $h_{0} \in (0, h_{\mu_{0}}(\mathcal{T}))$ and $\beta_{0} > 0$ satisfying
$$0 < h_{0} + \beta_{0} < h(X, \mathcal{T}, \varepsilon_{0}).$$
Applying Proposition \ref{proposition 4.1}, we obtain $\gamma \in (0, \varepsilon_{0})$ and a compact $\mathcal{T}$-invariant subset $\Delta \subset X$ such that
$$h(\Delta, \mathcal{T}, \gamma) < h_{0} + \beta_{0} < h(X, \mathcal{T}, \varepsilon_{0}) \leq h(X, \mathcal{T}, \gamma).$$
The strict inequality $h(\Delta , \mathcal{T}, \gamma) < h(X, \mathcal{T}, \gamma)$ shows that $\Delta$ is a proper subset of $X$. Since $\Delta$ is a nonempty proper compact and $\mathcal{T}$-invariant subset of $X$, this contradicts the definition of minimality. Hence, $(X, d, \mathcal{T})$ is not minimal.
\end{proof}
\begin{corollary}\label{cor 4.10}
Let $(X, d, \mathcal{T})$ be a topological dynamical system with the approximate $L$-product property that is not uniquely ergodic. Then $(X, d, \mathcal{T})$ is not minimal.
\end{corollary}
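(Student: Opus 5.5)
Since the system is not uniquely ergodic, $\mathcal{M}(X,\mathcal{T})$ contains at least two distinct invariant measures. Because $\mathcal{M}(X,\mathcal{T})$ is a Choquet simplex whose extreme points are the ergodic measures, it contains two distinct ergodic measures $\mu_1 \neq \mu_2$. I will show that a small invariant compact set can be found whose invariant measures all lie near $\mu_1$, and therefore away from $\mu_2$. Such a set cannot be all of $X$, which contradicts minimality.

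The key steps, in order:

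\textbf{Step 1 (separation).} Set $\eta_0 := \tfrac12 D(\mu_1,\mu_2) > 0$.

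\textbf{Step 2 (small invariant set).} Produce a nonempty compact $\mathcal{T}$-invariant set $\Delta$ with $D(\nu,\mu_1) < \eta_0$ for every $\nu \in \mathcal{M}(\Delta,\mathcal{T})$.
\begin{itemize}
\item If $h_{\mu_1}(\mathcal{T}) > 0$, pick $h_0 \in (0, h_{\mu_1}(\mathcal{T}))$ and arbitrary $\beta_0, \varepsilon_0 > 0$, and apply Proposition~\ref{proposition 4.1}(\ref{pro 4.1(1)}) with $\mu_0 = \mu_1$.
\item If $h_{\mu_1}(\mathcal{T}) = 0$, apply Proposition~\ref{pro4.8} instead.
\end{itemize}
In both cases the construction gives $\Delta = \bigcup_{\mathbf{k}\in\Lambda_M} T^{\mathbf{k}}Y$. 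This set is nonempty, because the approximate $L$-product property applied to any element of $\Gamma$ yields a tracing point in $Y$. By the Krylov--Bogolyubov theorem, $\mathcal{M}(\Delta,\mathcal{T}) \neq \emptyset$.

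\textbf{Step 3 (properness).} View $\mu_2$ as an element of $\mathcal{M}(X,\mathcal{T})$. If $\Delta = X$, then $\mu_2 \in \mathcal{M}(\Delta,\mathcal{T})$, so $D(\mu_2,\mu_1) < \eta_0 = \tfrac12 D(\mu_1,\mu_2)$, which is impossible. Hence $\Delta$ is a nonempty proper compact $\mathcal{T}$-invariant subset of $X$, and the system is not minimal.

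The main point needing care is that $\Delta$ is genuinely nonempty and invariant, so that the contradiction applies. Invariance is already established in the proposition on $\Delta$ in Subsection~\ref{con4.1}. In the zero-entropy case, I would make sure Proposition~\ref{pro4.8} is obtained by the same construction with $\Gamma_M$ replaced by any nonempty finite subset of $X_{M,B(\mu_1,\eta)}$. This subset is nonempty because $\mu_1$ is ergodic and has generic points. Lemma~\ref{lemma3.3} and Lemma~\ref{lemma3.2} then still give $\Delta \subset Z_{KM,3\eta}$, and hence the required measure estimate.

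A small subtlety concerns $L = \mathbb{Z}_+^d$, where invariance means only forward invariance $T^{\mathbf{i}}\Delta \subset \Delta$. Minimality as defined in the paper, namely that no nonempty proper compact $\mathcal{T}$-invariant subset exists, is exactly contradicted by Step 3, so nothing further is required.
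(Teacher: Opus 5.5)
Your proof is correct. It is a somewhat leaner variant of the paper's argument rather than the same one.

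\textbf{The paper's route.} The paper applies Proposition~\ref{proposition 4.1}/Proposition~\ref{pro4.8} twice. This gives compact invariant sets $\Delta_1$ and $\Delta_2$ whose invariant measures lie within $\eta_0<\frac13 D(\mu_1,\mu_2)$ of $\mu_1$ and $\mu_2$ respectively. It then shows $\Delta_1\cap\Delta_2=\emptyset$: an invariant measure on a nonempty intersection would be close to both $\mu_1$ and $\mu_2$, contradicting the triangle inequality. Properness follows from disjointness of two nonempty sets.

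\textbf{Your route.} You apply the construction only once, to $\mu_1$. You then rule out $\Delta=X$ directly by using $\mu_2$ itself as a witness in $\mathcal{M}(X,\mathcal{T})=\mathcal{M}(\Delta,\mathcal{T})$.

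\textbf{What each buys.} Your version is shorter and needs less:
\begin{itemize}
\item $\mu_2$ need not be ergodic. Any invariant measure different from the ergodic $\mu_1$ works, so the Choquet-simplex step is not essential.
\item No argument about an invariant measure on an intersection is needed.
\item Your appeal to Krylov--Bogolyubov on $\Delta$ is superfluous. Only $\Delta\neq\emptyset$ is used, and you justify that correctly from $\Gamma_M\neq\emptyset$ and the approximate product property.
\end{itemize}
The paper's version gives more structure: two disjoint nonempty compact invariant sets, and hence at least two distinct minimal subsets.

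Your sketch of how to obtain Proposition~\ref{pro4.8} also fills in a step the paper states without proof. You replace $\Gamma_M$ by any nonempty finite subset of $X_{M,B(\mu_1,\eta)}$, which is nonempty for large $M$ by the ergodic theorem for $\mu_1$ along the cubes $\Lambda_M$. Lemmas~\ref{lemma3.3} and~\ref{lemma3.2} then give the measure estimate as before.
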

\begin{proof}
Let $\mu_{1}$ and $\mu_{2}$ be two distinct ergodic measures and choose $0 < \eta_{0} < \frac{1}{3} D(\mu_{1}, \mu_{2})$. By Proposition \ref{proposition 4.1} and Proposition \ref{pro4.8}, we can find compact $\mathcal{T}$-invariant subsets $\Delta_{1}$ and $\Delta_{2}$ such that:
\begin{itemize}
     \item For every $\nu \in \mathcal{M}(\Delta_1,\mathcal{T})$, we have $D(\nu,\mu_1) \le \eta_0$.
    \item For every $\nu \in \mathcal{M}(\Delta_2,\mathcal{T})$, we have $D(\nu,\mu_2) \le \eta_0$.
\end{itemize}
If $\Delta_1$ and $\Delta_2$ intersected, then their intersection would be a nonempty compact $\mathcal{T}$-invariant set and would support an invariant measure $\nu$. This $\nu$ would belong to both $\mathcal{M}(\Delta_1,\mathcal{T})$ and $\mathcal{M}(\Delta_2,\mathcal{T})$, leading to
$$D(\mu_1,\mu_2) \le D(\mu_1,\nu) + D(\nu,\mu_2) \le 2\eta_0 < \frac{2}{3}D(\mu_1,\mu_2) < D(\mu_1,\mu_2),$$
a contradiction. Hence $\Delta_1$ and $\Delta_2$ are disjoint. Consequently, both $\Delta_1$ and $\Delta_2$ are nonempty proper compact and $\mathcal{T}$-invariant subsets of $X$, which proves that $(X, d, \mathcal{T})$ is not minimal.
\end{proof}
\section{Intermediate entropy}
In this section, we prove Theorem \ref{theorem 1.4} and Corollary \ref{cor1.5}. Theorem \ref{theorem 1.4} (\ref{theorem 1.4(1)}) is an immediate corollary of Theorem \ref{theorem 1.3} and Proposition \ref{proposition 5.1}.
\begin{proposition}\label{proposition 5.1}
Let $(X, d, \mathcal{T})$ be an asymptotically entropy expansive system and let $\mu \in \mathcal{M}(X, \mathcal{T})$ be almost entropy-approximable. Then $\mu$ is entropy-approximable.
\end{proposition}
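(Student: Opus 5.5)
The plan is to combine the small-scale control in the definition of almost entropy-approximability with Proposition \ref{proposition 2.9}. Asymptotic entropy expansiveness will absorb the entropy lost below scale $\gamma$. The small parameter is chosen first, and the almost-approximation property is then applied with that parameter.

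Fix $\mu\in\mathcal{M}(X,\mathcal{T})$, a neighborhood $U$ of $\mu$, $h\in(0,h_\mu(\mathcal{T}))$ and $\beta>0$; the case $h_\mu(\mathcal{T})=0$ is vacuous. Since $\lim_{\varepsilon\to 0}h^*(\mathcal{T},\varepsilon)=0$, I choose $\varepsilon>0$ such that $h^*(\mathcal{T},\gamma)<\beta/2$ for every $\gamma\in(0,\varepsilon)$. This requires a monotonicity step: $\Gamma_\gamma(x)\subset\Gamma_\varepsilon(x)$ for $\gamma<\varepsilon$, so $h^*(\mathcal{T},\cdot)$ is non-decreasing in the scale. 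Hence it suffices to take $\varepsilon$ with $h^*(\mathcal{T},\varepsilon)<\beta/2$.

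Next I apply Definition \ref{definition 1.2}(1) to $\mu$ with this $U$, this $h$, the chosen $\varepsilon$, and $\beta/2$ in place of $\beta$. It produces a compact $\mathcal{T}$-invariant $\Delta$ and $\gamma\in(0,\varepsilon)$ with $\mathcal{M}(\Delta,\mathcal{T})\subset U$, $h(\Delta,\mathcal{T})>h$, and $h(\Delta,\mathcal{T},\gamma)<h+\beta/2$.

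Then I apply Proposition \ref{proposition 2.9} with $F=\Delta$:
\[
h(\Delta,\mathcal{T})\le h(\Delta,\mathcal{T},\gamma)+h^*(\mathcal{T},\gamma)<h+\tfrac{\beta}{2}+\tfrac{\beta}{2}=h+\beta .
\]
One point needs checking: Proposition \ref{proposition 2.9} uses $h^*$ computed in the ambient system $X$, not in $\Delta$. The stated form applies to an arbitrary subset $F\subset X$ with the ambient $h^*$, so this is exactly what is needed, and no passage to the subsystem's own $h^*$ is required. Together with $h(\Delta,\mathcal{T})>h$ and $\mathcal{M}(\Delta,\mathcal{T})\subset U$, this is precisely Definition \ref{definition 1.2}(2).

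The main obstacle is mild. One must ensure that $h(\Delta,\mathcal{T},\gamma)$ in the definition and $h(F,\mathcal{T},\varepsilon)$ in Proposition \ref{proposition 2.9} are the same quantity, namely the $\limsup$ of separated-set growth at scale $\gamma$. One must also ensure that the ordering of quantifiers lets $\varepsilon$ be chosen before $\Delta$ and $\gamma$. Both hold by the definitions as stated, so the argument is essentially immediate.
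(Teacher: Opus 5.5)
Your proposal is correct and matches the paper's proof essentially step for step: choose a scale threshold below which $h^*(\mathcal{T},\cdot)<\beta/2$, apply almost entropy-approximability with that threshold and with $\beta/2$ in place of $\beta$, and then conclude via Proposition~\ref{proposition 2.9} applied to $F=\Delta$ with the ambient $h^*$. Your monotonicity remark is harmless but not needed, since $\lim_{\varepsilon\to 0}h^*(\mathcal{T},\varepsilon)=0$ already gives the bound uniformly for all smaller scales.
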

\begin{proof}
Consider any neighborhood $U$ of $\mu$ in $\mathcal{M}(X, \mathcal{T})$, $h \in (0, h_\mu(\mathcal{T}))$ and $\beta > 0$. By Definition \ref{definition 2.8}(3) of asymptotically entropy expansiveness, we can choose $\varepsilon_0 > 0$ such that for every $\varepsilon' \in (0, \varepsilon_0)$,
\begin{align}\label{5.1}
h^*(\mathcal{T}, \varepsilon') < \frac{\beta}{2}.
\end{align}
Since $\mu$ is almost entropy-approximable, there exist a compact $\mathcal{T}$-invariant set $\Delta$ and a number $\gamma \in (0, \varepsilon_0)$ satisfying
\begin{align}\label{5.2}
 \mathcal{M}(\Delta , \mathcal{T}) \subset U,~~h(\Delta, \mathcal{T}) > h~~ \text{and}~~h(\Delta , \mathcal{T}, \gamma) < h + \frac{\beta}{2}.   
\end{align}
Combining Proposition \ref{proposition 2.9} with \eqref{5.1} and \eqref{5.2}, we obtain
\begin{align*}
h(\Delta , \mathcal{T}) \leq h(\Delta , \mathcal{T}, \gamma) + h^*(\mathcal{T}, \gamma) < h + \frac{\beta}{2} + \frac{\beta}{2} = h + \beta. 
\end{align*}
Therefore $\mu$ is entropy-approximable.
\end{proof}
According to Proposition \ref{proposition 2.9}, asymptotic entropy expansiveness implies the upper semi-continuity of the entropy map. Consequently, Theorem \ref{theorem 1.4} (\ref{theorem 1.4(2)}) follows from Theorem \ref{theorem 1.4} (\ref{theorem 1.4(1)}) and Proposition \ref{proposition 5.2}.
\begin{proposition}\label{proposition 5.2}
Let $(X, d, \mathcal{T})$ be a topological dynamical system. Assume that the following two conditions hold:
\begin{enumerate}
    \item Every invariant measure $\mu \in \mathcal{M}(X, \mathcal{T})$ is entropy-approximable.
    \item  The entropy map $\mu \mapsto h_\mu(\mathcal{T})$ is upper semi-continuous on $\mathcal{M}(X, \mathcal{T})$.
\end{enumerate}
Then the system $(X, d, \mathcal{T})$ is \emph{entropy-generic}. 
\end{proposition}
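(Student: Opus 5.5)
The plan is a Baire category argument inside the closed set $\mathcal{M}^\alpha := \mathcal{M}^\alpha(X,\mathcal{T})$, following Sun's scheme in \cite{S25}. Fix $\alpha\in[0,h(\mathcal{T}))$. By upper semi-continuity (hypothesis (2)), $\mathcal{M}^\alpha$ is a closed subset of the compact metrizable space $\mathcal{M}(X,\mathcal{T})$, hence a Baire space. We write
$$\mathcal{M}_e(X,\mathcal{T},\alpha)=\mathcal{M}_e(X,\mathcal{T})\cap\mathcal{M}^\alpha\cap\bigcap_{m\ge1}\{\mu:h_\mu(\mathcal{T})<\alpha+1/m\}.$$
By upper semi-continuity, each set $\{\mu : h_\mu(\mathcal{T})<\alpha+1/m\}$ is open. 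Since $\mathcal{M}_e(X,\mathcal{T})$ is a $G_\delta$ in $\mathcal{M}(X,\mathcal{T})$, the whole intersection is a $G_\delta$ subset of $\mathcal{M}^\alpha$. It therefore suffices to show that $\mathcal{M}_e(X,\mathcal{T},\alpha)$ is dense in $\mathcal{M}^\alpha$; residuality then follows.

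For density, fix $\mu\in\mathcal{M}^\alpha$ and a neighborhood $U$ of $\mu$. We produce an ergodic $\nu\in U$ with $h_\nu(\mathcal{T})=\alpha$ by a nested-sets construction. First reduce to $h_\mu(\mathcal{T})>\alpha$. If $h_\mu(\mathcal{T})=\alpha$, perturb $\mu$ to $\mu'=(1-t)\mu+t\mu^*$ with $t$ small, where $\mu^*$ has $h_{\mu^*}(\mathcal{T})>\alpha$; such $\mu^*$ exists by Proposition \ref{proposition 2.6} because $\alpha<h(\mathcal{T})$. Proposition \ref{proposition 2.5} then gives $h_{\mu'}(\mathcal{T})>\alpha$, and Proposition \ref{proposition2.11} gives $\mu'\in U$. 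The case $\alpha=0$ is handled in the same way, or directly.

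With $h_\mu(\mathcal{T})>\alpha$, apply entropy-approximability (hypothesis (1)) with $h=\alpha$ and $\beta=1$. This yields a compact invariant $\Delta_1$ with $\mathcal{M}(\Delta_1,\mathcal{T})\subset U_1$, where $\overline{U_1}\subset U$, and $\alpha<h(\Delta_1,\mathcal{T})<\alpha+1$. By the variational principle (Proposition \ref{proposition 2.6}) applied to $(\Delta_1,\mathcal{T})$, there is $\mu_1\in\mathcal{M}_e(\Delta_1,\mathcal{T})$ with $h_{\mu_1}(\mathcal{T})>\alpha$. Next apply hypothesis (1) to $\mu_1$ with a neighborhood $U_2$ of radius $<1/2$, $\overline{U_2}\subset U_1$, and $\beta=1/2$. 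This gives $\Delta_2$ and an ergodic $\mu_2$ on $\Delta_2$ with $h_{\mu_2}(\mathcal{T})>\alpha$. Iterating produces $\mu_k\in U_k$ with
$$\alpha<h_{\mu_k}(\mathcal{T})\le h(\Delta_k,\mathcal{T})<\alpha+1/k,$$
and nested closures $\overline{U_{k+1}}\subset U_k$ whose diameters tend to $0$.

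The main obstacle is passing to the limit. The sequence $\mu_k$ converges to some $\nu\in\bigcap_k\overline{U_k}\subset U$, and upper semi-continuity gives $h_\nu(\mathcal{T})\ge\limsup_k h_{\mu_k}(\mathcal{T})\ge\alpha$. What remains is to show that $\nu$ is ergodic and that $h_\nu(\mathcal{T})\le\alpha$, and this is where the choice of the $U_k$ must be arranged carefully. To control ergodicity, fix a countable dense family $\{g_j\}\subset C(X)$ and use the standard characterization of ergodicity via convergence of averages in $L^2$. At stage $k$, choose $U_{k+1}$ so small around the ergodic $\mu_k$ that every measure in $\overline{U_{k+1}}$ satisfies
$$\Bigl|\int \bigl(\mathcal{E}_{n_k}g_j\bigr)^2\,d\rho-\Bigl(\int g_j\,d\rho\Bigr)^2\Bigr|<1/k \quad\text{for } j\le k,$$
where $n_k$ is chosen so that this holds for $\mu_k$. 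Since these quantities are weak-$*$ continuous in $\rho$, the condition passes to $\nu$, and it forces $\nu$ to be ergodic. This is the closed-nested-neighborhood version of the statement that $\mathcal{M}_e$ is a $G_\delta$.

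For the upper bound on $h_\nu(\mathcal{T})$, the cleaner route is to arrange that $\nu$ is supported on $\bigcap_k\Delta_k$, or at least to bound $h_\nu(\mathcal{T})$ by $\inf_k h(\Delta_k,\mathcal{T})\le\alpha$. One way is to choose $\Delta_{k+1}$ inside $\Delta_k$, by applying entropy-approximability within the subsystem. This requires the subsystem to inherit hypothesis (1), which it may not. So I would instead take the alternative approach: use the open sets $\{h<\alpha+1/m\}$ and show, via the previous construction, that each $G_m:=\{\mu\in\mathcal{M}^\alpha: h_\mu(\mathcal{T})<\alpha+1/m\}\cap\mathcal{M}_e$-neighborhood is dense. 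Concretely, the points $\mu_k$ are ergodic, lie in $\mathcal{M}^\alpha$, and satisfy $h_{\mu_k}(\mathcal{T})<\alpha+1/k$. Hence the sets $\{h<\alpha+1/m\}\cap\mathcal{M}^\alpha$ are open and dense in $\mathcal{M}^\alpha$, and $\mathcal{M}_e\cap\mathcal{M}^\alpha$ is a dense $G_\delta$ in $\mathcal{M}^\alpha$, dense because the $\mu_k$ are ergodic and can be taken in any $U$. Baire's theorem applied to this countable family of dense $G_\delta$ subsets of $\mathcal{M}^\alpha$ then gives a residual intersection, which is exactly $\mathcal{M}_e(X,\mathcal{T},\alpha)$. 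This sidesteps the limit issue entirely, so the only real work is the density step described above.
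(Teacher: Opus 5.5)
Your final argument is correct and is essentially the paper's proof. Both run Baire category in the closed set $\mathcal{M}^\alpha(X,\mathcal{T})$, with density obtained by moving $\mu$ slightly toward a measure of entropy $>\alpha$, applying entropy-approximability with $h=\alpha$ and small $\beta$, and taking an ergodic measure of entropy $>\alpha$ on $\Delta$ via the variational principle; the only difference is that the paper packages ergodicity and the entropy bound together into the dense $G_\delta$ sets $\{\mu\in\mathcal{M}_e(X,\mathcal{T}):\alpha\le h_\mu(\mathcal{T})<\alpha+1/k\}$, whereas you intersect $\mathcal{M}_e(X,\mathcal{T})\cap\mathcal{M}^\alpha(X,\mathcal{T})$ with the open sets $\{h_\mu(\mathcal{T})<\alpha+1/m\}$ separately. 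The nested-neighborhood limit construction is not needed (only the single step producing $\mu_1$ is used) and can be dropped, and for $\alpha=0$ you should invoke the hypothesis with a small positive $h$ rather than $h=0$, since Definition 1.2 requires $h\in(0,h_\mu(\mathcal{T}))$, a point the paper also passes over.
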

\begin{proof}
We present the proof in three steps. \\
\textbf{Step 1: $\mathcal{M}^\alpha(X, \mathcal{T})$ is a Baire space.}\\
The upper semicontinuity of the entropy map implies that $\mathcal{M}^\alpha(X,\mathcal{T}) := \{ \mu \in \mathcal{M}(X,\mathcal{T}) : h_\mu(\mathcal{T}) \ge \alpha \}$ is a closed subset of $\mathcal{M}(X,\mathcal{T})$. Consequently, $\mathcal{M}^\alpha(X,\mathcal{T})$ is compact and metrizable, and thus completely metrizable. It follows that $\mathcal{M}^\alpha(X,\mathcal{T})$ is a Baire space.\\
\textbf{Step 2: Constructing a dense $G_\delta$ set.}\\
Fix $\alpha \in [0, h(\mathcal{T}))$ and choose $\alpha'$ with $\alpha < \alpha' < h(\mathcal{T})$. Define
$$\mathcal{M}(\alpha, \alpha') := \{ \mu \in \mathcal{M}_e(X, \mathcal{T}) : \alpha \leq h_\mu(\mathcal{T}) < \alpha' \}.$$
\noindent \textit{(1) $\mathcal{M}(\alpha,\alpha')$ is a $G_\delta$ set in $\mathcal{M}^\alpha(X,\mathcal{T})$.}    
 By upper semi-continuity of the entropy map, the set $O := \{ \mu \in \mathcal{M}(X, \mathcal{T}) : h_\mu(\mathcal{T}) < \alpha' \}$ is open. Hence $O \cap \mathcal{M}^\alpha(X,\mathcal{T})$ is relatively open in the subspace $\mathcal{M}^\alpha(X,\mathcal{T})$. Moreover, $\mathcal{M}_e(X, \mathcal{T})$ is a $G_\delta$ subset. Therefore,
    $$\mathcal{M}(\alpha, \alpha') = \mathcal{M}_e(X, \mathcal{T}) \cap O \cap \mathcal{M}^\alpha(X, \mathcal{T})$$
is a $G_\delta$ set in the subspace $\mathcal{M}^\alpha(X,\mathcal{T})$.\\
\noindent \textit{(2) $\mathcal{M}(\alpha,\alpha')$ is dense in $\mathcal{M}^\alpha(X,\mathcal{T})$.}
Let $\mu \in \mathcal{M}^\alpha(X, \mathcal{T})$ and $\eta > 0$. We need to find $\nu \in \mathcal{M}(\alpha, \alpha')$ with $D(\nu, \mu) < \eta$. Since $\alpha' < h(\mathcal{T})$, the variational principle yields an ergodic measure $\mu_X \in \mathcal{M}_e(X, \mathcal{T})$ such that $h_{\mu_X}(\mathcal{T}) = h(\mathcal{T}) >\alpha'$. Recall that $D^*$ is the diameter of $\mathcal{M}(X)$, let $\theta =\frac{\eta}{3D^*}$ and define
$$\mu' = (1 - \theta) \mu + \theta \mu_X.$$
By the convexity property of the metric $D$, we have $D(\mu', \mu) \le \theta D^* = \frac{\eta}{3}$. The affinity of the entropy map gives
$$h_{\mu'}(\mathcal{T}) = (1 - \theta) h_\mu(\mathcal{T}) + \theta h_{\mu_X}(\mathcal{T}) > (1 - \theta) \alpha + \theta \alpha' > \alpha.$$
Because $\mu'$ is entropy-approximable and $\alpha \in [0, h_{\mu'}(\mathcal{T}))$, we can choose $\beta$ with
$$0 < \beta < \min\{\alpha' - \alpha,\; h_{\mu'}(\mathcal{T}) - \alpha\}.$$
Then there exists a compact $\mathcal{T}$-invariant set $\Delta \subset X$ such that
$$\mathcal{M}(\Delta, \mathcal{T}) \subset B\left(\mu', \frac{\eta}{3}\right)~~\text{and}~~\alpha < h(\Delta, \mathcal{T}) < \alpha + \beta < \alpha'.$$
By the variational principle applied to the subsystem $(\Delta, \mathcal{T})$, we can choose $\nu \in \mathcal{M}_e(\Delta , \mathcal{T})\subset B(\mu, \eta)$. It follows that $h_\nu(\mathcal{T}) \in [\alpha, \alpha')$, so $\nu \in \mathcal{M}(\alpha, \alpha')$. Moreover,
$$D(\nu,\mu) \le D(\nu,\mu') + D(\mu',\mu) < \frac{\eta}{3} + \frac{\eta}{3} = \frac{2\eta}{3} < \eta.$$
Thus $\mathcal{M}(\alpha, \alpha')\cap B(\mu, \eta)\neq \emptyset$, proving that $\mathcal{M}(\alpha, \alpha')$ is dense in $\mathcal{M}^\alpha(X, \mathcal{T})$. \\
\textbf{Step 3: Obtaining a residual set.}\\
For each $k\in \mathbb{N}$ large enough so that $\alpha_k := \alpha + \frac{1}{k} < h(\mathcal{T})$, the set $\mathcal{M}(\alpha, \alpha_k)$ is a dense $G_\delta$ set in $\mathcal{M}^\alpha(X, \mathcal{T})$ by Steps 1 and 2. Since $\mathcal{M}^\alpha(X,\mathcal{T})$ is a Baire space, the countable intersection
$$\bigcap_{k=1}^{\infty} \mathcal{M}(\alpha, \alpha + \tfrac{1}{k}) = \{ \mu \in \mathcal{M}_e(X, \mathcal{T}) : h_\mu(\mathcal{T}) = \alpha \} = \mathcal{M}_e(X, \mathcal{T}, \alpha)$$
is also a dense $G_\delta$ set, hence residual in $\mathcal{M}^\alpha(X, \mathcal{T})$. This establishes that $(X, d, \mathcal{T})$ is entropy-generic.
\end{proof}
Entropy-genericity implies that Corollary \ref{cor1.5} follows directly from Theorem \ref{theorem 1.4} (\ref{theorem 1.4(2)}) and Proposition \ref{pro5.3}.
\begin{proposition}\label{pro5.3}
Let $(X, d, \mathcal{T})$ be a topological dynamical system. If $(X, d, \mathcal{T})$ is entropy-generic, then for every $\mu\in \mathcal{M}(X, \mathcal{T})$ and every neighborhood $U$ of $\mu$, we have
\[\begin{cases} 
\mathbb{H}(X, \mathcal{T}, U) \supset [0, h_\mu(\mathcal{T})], & \text{if } h_\mu(\mathcal{T}) < h(\mathcal{T}); \\ 
\mathbb{H}(X, \mathcal{T}, U) \supset [0, h_\mu(\mathcal{T})), & \text{if } h_\mu(\mathcal{T}) = h(\mathcal{T}).
\end{cases}\]
\end{proposition}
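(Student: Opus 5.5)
The plan is to show that every value $\alpha$ in the claimed interval is attained by an ergodic measure lying in $U$. Two facts do the work: entropy-genericity, which makes $\mathcal{M}_e(X,\mathcal{T},\alpha)$ residual and in particular dense in $\mathcal{M}^\alpha(X,\mathcal{T})$, and the observation that $\mu$ itself, or a small perturbation of it, lies in $\mathcal{M}^\alpha(X,\mathcal{T})$.

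Fix $\mu\in\mathcal{M}(X,\mathcal{T})$ and a neighborhood $U$ of $\mu$. Fix also $\alpha$ with $0\le\alpha\le h_\mu(\mathcal{T})$, and additionally $\alpha<h_\mu(\mathcal{T})$ in the case $h_\mu(\mathcal{T})=h(\mathcal{T})$. In both cases $\alpha<h(\mathcal{T})$, because either $\alpha\le h_\mu(\mathcal{T})<h(\mathcal{T})$ or $\alpha<h_\mu(\mathcal{T})=h(\mathcal{T})$. So entropy-genericity applies to $\alpha$, and $\mathcal{M}_e(X,\mathcal{T},\alpha)$ is residual in $\mathcal{M}^\alpha(X,\mathcal{T})$. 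Since $h_\mu(\mathcal{T})\ge\alpha$, we have $\mu\in\mathcal{M}^\alpha(X,\mathcal{T})$, and $U\cap\mathcal{M}^\alpha(X,\mathcal{T})$ is a nonempty relatively open subset of $\mathcal{M}^\alpha(X,\mathcal{T})$.

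The main point is that residual should imply dense here, so that this open set meets $\mathcal{M}_e(X,\mathcal{T},\alpha)$. Density needs $\mathcal{M}^\alpha(X,\mathcal{T})$ to be a Baire space. The definition of entropy-generic does not include upper semi-continuity, so closedness of $\mathcal{M}^\alpha(X,\mathcal{T})$ is not given. I would avoid the issue in one of two ways.

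\begin{enumerate}
\item Read "residual" as containing a dense $G_\delta$, as the proof of Proposition \ref{proposition 5.2} actually establishes. Density is then immediate.
\item Argue directly that $\mathcal{M}^\alpha(X,\mathcal{T})$ is Baire. It is a convex subset of a compact metrizable space, but convexity alone is not enough. One could try to show it is a $G_\delta$ in $\mathcal{M}(X,\mathcal{T})$: the entropy map is a countable infimum-type limit of upper semi-continuous functions, so superlevel sets might be Borel of low class. This route is more delicate.
\end{enumerate}

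I would adopt the first reading, stating that residual means containing a dense $G_\delta$ in the subspace, which is the standard usage. Then there is $\nu\in U\cap\mathcal{M}_e(X,\mathcal{T},\alpha)$, and $h_\nu(\mathcal{T})=\alpha\in\mathbb{H}(X,\mathcal{T},U)$.

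Letting $\alpha$ range over $[0,h_\mu(\mathcal{T})]$ gives the first case, and over $[0,h_\mu(\mathcal{T}))$ gives the second. The endpoint $h_\mu(\mathcal{T})$ is excluded in the second case only because entropy-genericity is assumed just for $\alpha<h(\mathcal{T})$. If a cleaner argument is wanted for the first case, one can perturb $\mu$ slightly towards a high-entropy measure as in Step 2 of Proposition \ref{proposition 5.2}. This is not needed, since $\mu\in\mathcal{M}^\alpha(X,\mathcal{T})$ already holds.
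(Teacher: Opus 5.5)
Your proof is correct, and it is slightly more direct than the paper's.

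The paper first obtains $\mathbb{H}(X,\mathcal{T},U)\supset[0,h_\mu(\mathcal{T}))$ using only levels $\alpha<h_\mu(\mathcal{T})$. To reach the endpoint when $h_\mu(\mathcal{T})<h(\mathcal{T})$, it replaces $\mu$ by $\mu'=(1-\frac{\eta}{D^*})\mu+\frac{\eta}{D^*}\mu_0$, where $h_{\mu_0}(\mathcal{T})>h_\mu(\mathcal{T})$. It then uses affinity of entropy (Proposition \ref{proposition 2.5}) to get $h_{\mu'}(\mathcal{T})>h_\mu(\mathcal{T})$ with $\mu'$ still in $U$, and reruns the first step at $\mu'$.

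You notice that this detour is unnecessary. If $h_\mu(\mathcal{T})<h(\mathcal{T})$, then $\alpha=h_\mu(\mathcal{T})$ is itself an admissible level for entropy-genericity, and $\mu\in\mathcal{M}^{\alpha}(X,\mathcal{T})$. So density applies at $\mu$ directly, and both cases are handled by one argument, with no need for Proposition \ref{proposition 2.5} or an auxiliary high-entropy measure. The paper's route only ever uses density at points of entropy strictly above $\alpha$, which would matter only under a hypothesis weaker than entropy-genericity as defined.

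Your explicit treatment of residual versus dense is also an improvement. The paper just writes ``by density'', tacitly assuming that residual subsets of $\mathcal{M}^\alpha(X,\mathcal{T})$ are dense. That needs either the dense-$G_\delta$ reading you adopt (which is what Proposition \ref{proposition 5.2} actually delivers) or a Baire property of $\mathcal{M}^\alpha(X,\mathcal{T})$. Where the proposition is used (Corollary \ref{cor1.5}), upper semi-continuity makes $\mathcal{M}^\alpha(X,\mathcal{T})$ compact. There the two readings coincide, so your more delicate option (2) can be discarded.
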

\begin{proof}
Choose $\eta > 0$ such that $B(\mu, 2\eta) \subset U$.
Let $0 \le\alpha < h_\mu(\mathcal{T}) \le h(\mathcal{T})$.
Since $(X, d, \mathcal{T})$ is entropy-generic, the set
$$\mathcal{M}_e(X, \mathcal{T}, \alpha) := \{ \nu \in \mathcal{M}_e(X, \mathcal{T}) : h_\nu(\mathcal{T}) = \alpha \}$$
is residual in the subspace
$$\mathcal{M}^\alpha(X, \mathcal{T}) := \{ \nu \in \mathcal{M}(X, \mathcal{T}) : h_\nu(\mathcal{T}) \geq \alpha \}.$$
As $\mu \in \mathcal{M}^\alpha(X, \mathcal{T})$, the relatively open set $B(\mu, \eta) \cap \mathcal{M}^\alpha(X, \mathcal{T})$ is nonempty. By density, there exists an ergodic measure
$$\nu \in \mathcal{M}_e(X, \mathcal{T}, \alpha) \cap B(\mu, \eta) \subset U,$$
such that $h_\nu(\mathcal{T}) = \alpha$. Hence,
\begin{align}\label{5.3}
 \mathbb{H}(X, \mathcal{T}, U) \supset [0, h_\mu(\mathcal{T})).   
\end{align}
Now assume $h_\mu(\mathcal{T}) < h(\mathcal{T})$. By the variational principle, there exists $\mu_0 \in \mathcal{M}(X, \mathcal{T})$ such that $h_{\mu_0}(\mathcal{T}) > h_\mu(\mathcal{T})$. Denote
$$\mu' := \left(1 - \frac{\eta}{D^*}\right) \mu + \frac{\eta}{D^*} \mu_0.$$
A direct check shows $\mu' \in B(\mu, 2\eta) \subset U$, and
$$h_{\mu'}(\mathcal{T})=\left(1 - \frac{\eta}{D^*}\right) h_{\mu}(\mathcal{T}) + \frac{\eta}{D^*} h_{\mu_0}(\mathcal{T}) > h_{\mu}(\mathcal{T}).$$
Applying \eqref{5.3} to $\mu'$ yields  
$$\mathbb{H}(X,\mathcal{T},U) \supset [0, h_{\mu'}(\mathcal{T}))\supset [0, h_\mu(\mathcal{T})].$$
\end{proof}

\section*{Compliance with Ethical Standards}

\begin{itemize}
    \item \textbf{Funding:} The  second author was supported by the
National Natural Science Foundation of China (No.12471184).
The third authors was  supported by the
National Natural Science Foundation of China (No.11971236), Qinglan Project of Jiangsu Province of China.
    \item \textbf{Conflict of Interest:} The authors have no relevant financial or non-financial interests to disclose.
    \item \textbf{Research Involving Human Participants and/or Animals:} This research did not involve human participants or animals.
    \item \textbf{Data Availability Statement:} Data sharing is not applicable to this article as no datasets were generated or analyzed during the current study.
\end{itemize}


\begin{thebibliography}{MM}
\bibitem[BCL07]{BCL07} F. B\'eguin, S. Crovisier, F. Le Roux, Construction of curious minimal uniquely ergodic homeomorphisms on manifolds: the Denjoy-Rees technique.
 \emph{Ann. Sci. \'Ec. Norm. Sup\'er.} {\bf40}(4)(2007)251-308.
\bibitem[B71]{B71} R. Bowen, Periodic points and measures for Axiom A diffeomorphisms. \emph{Transl. Am. Math. Soc.} \textbf{154} (1971) 377-397.
 \bibitem[B20]{B20} D. Burguet, Topological and almost Borel universality for systems with the weak specification property. \emph{Ergod. Theory Dyn. Syst.}  \textbf{40} (8) (2020) 2098-2115.
 \bibitem[CM21]{CM21}
 N. Chandgotia, T. Meyerovitch, Borel subsystems and ergodic universality for compact $\mathbb{Z}^d$-systems via specification and beyond.
 \emph{Proc. Lond. Math. Soc.} \textbf{123} (2021) 231-312.
 \bibitem[CLT20]{CLT20}D. Constantine, J. Lafont, D. J. Thompson, The weak specification property for geodesic flows on CAT($-1$) spaces, \emph{Groups Geom. Dyn.} \textbf{14} (1) (2020) 297-336.
\bibitem[GW94]{GW94} E. Glasner, B. Weiss,
Strictly ergodic, uniform positive entropy models.\emph{Bull. Soc. Math. Fr. }
\textbf{122} (3) (1994) 399-412.
\bibitem[GSW17]{GSW17} L. Guan, P. Sun, W. Wu, Measures of intermediate entropies and homogeneous dynamics.\emph{Nonlinearity.}\textbf{30} (2017) 3349-3361.
\bibitem[HK67]{HK67} F. Hahn, Y. Katznelson,
On the entropy of uniquely ergodic transformations.
\emph{Trans. Am. Math. Soc.} \textbf{126} (1967) 335-360.
\bibitem[HXX21]{HXX21} W. Huang, L. Xu, S. Xu, Ergodic measures of intermediate entropy for affine transformations of nilmanifolds. \emph{Electron. Res. Arch.} \textbf{29} (4) (2021) 2819-2827.
\bibitem[K80]{K80} A. Katok, Lyapunov exponents, entropy and periodic orbits for diffeomorphisms. \emph{Publ. Math. IHES} \textbf{51} (1980) 137-173.
\bibitem[KKK18]{KKK18} J. Konieczny, M. Kupsa, D. Kwietniak,
Arcwise connectedness of the set of ergodic measures of hereditary shifts.
\emph{Proc. Am. Math. Soc.} \textbf{146} (8) (2018) 3425-3438.
\bibitem[KLO16]{KLO16} D. Kwietniak, M. Lacka, P. Oprocha, A panorama of specification-like properties and their consequences. \emph{Contemp. Math.} \textbf{669} (2016) 155-186.
\bibitem[LO18]{LO18} J. Li, P. Oprocha, Properties of invariant measures in dynamical systems with the shadowing property. \emph{Ergod. Theory Dyn. Syst.} \textbf{38} (2018) 2257-2294.
\bibitem[LOS78]{LOS78}J. Lindenstrauss, G. Olsen, Y.Sternfeld, The Poulsen simplex. \emph{Ann. Inst. Fourier (Grenoble).} {\bf28}(1),vi, 91-114 (1978).
\bibitem[PS05]{PS05}C.-E. Pfister, W.G. Sullivan, Large deviations estimates for dynamical systems without the specification property. Application to the $\beta$-shifts. \emph{ Nonlinearity.} {\bf18}(2005) 237–261.
\bibitem[P01]{P01}R.R. Phelps, Lectures on Choquet’s Theorem, second ed., Lecture Notes in Mathematics, vol. 1757, \emph{Springer-Verlag, Berlin}, 2001.
\bibitem[QS16]{QS16} A. Quas, T. Soo, Ergodic universality of some topological dynamical systems. \emph{Trans. Am. Math. Soc. } \textbf{368} (6) (2016) 4137-4170.
\bibitem[RS16]{RS16} X. Ren, W. Sun: Local entropy, metric entropy and topological entropy for countable discrete amenable group actions. \emph{Int. J. Bifurc. Chaos.} {\bf26}(07) (2016) 1650110.
\bibitem[S10a]{S10a}P. Sun, Zero-entropy invariant measures for skew product diffeomorphisms.\emph{Ergod. Theory Dyn. Syst.} \textbf{30} (2010) 923-930.
\bibitem[S10b]{S10b}P. Sun, Measures of intermediate entropies for skew product diffeomorphisms.\emph{Discrete Contin. Dyn. Syst., Ser.A } \textbf{27} (3) (2010) 1219-1231.
\bibitem[S12]{S12}P. Sun, Density of metric entropies for linear toral automorphisms.
\emph{Dyn. Syst.} \textbf{27} (2) (2012) 197-204.
\bibitem[S21]{S21}P. Sun, Equilibrium states of intermediate entropies. \emph{Dyn. Syst.} \textbf{36} (1) (2021) 69-78.
\bibitem[S25]{S25} P. Sun, Ergodic measures of intermediate entropies for dynamical systems with approximate product property. \emph{Adv. Math.} {\bf465} (8) (2025) 110159.
\bibitem[U12]{U12} R. Ures, Intrinsic ergodicity of partially hyperbolic diffeomorphisms with a hyperbolic linear part. \emph{Proc. Am. Math. Soc.}\textbf{140} (6) (2012) 1973-1985.
\bibitem[W82]{W82}P. Walters, An Introduction to Ergodic Theory. \emph{Springer-Verlag}, 1982.
\bibitem[YZ20]{YZ20} D. Yang, J. Zhang, Non-hyperbolic ergodic measures and horseshoes in partially hyperbo lic homoclinic classes.
\emph{J. Inst. Math. Jussieu.} \textbf{19} (5) (2020) 1765-1792.
\end{thebibliography}
\end{document}